\documentclass[a4paper,10pt]{article}

\usepackage[utf8]{inputenc}
\usepackage[T1]{fontenc}

\usepackage[
	backend = biber,
	hyperref = auto,
	backref = true,
	doi = false,
	url = false
]{biblatex}	

\usepackage[colorlinks]{hyperref}
\hypersetup{
	linkcolor=blue,
	citecolor=red,
	urlcolor=teal,
	pdftitle = {Towards generalized prehomogeneous zeta integrals},
	pdfauthor = {Wen-Wei Li}
}

\usepackage{amsthm}
\usepackage{amssymb}
\usepackage{mathtools}
\usepackage{stmaryrd}	\SetSymbolFont{stmry}{bold}{U}{stmry}{m}{n}	
\usepackage{bm}
\usepackage{mathrsfs}
\usepackage{xspace}
\usepackage{tikz-cd}
\usetikzlibrary{positioning, shapes.geometric, patterns, graphs, decorations.pathmorphing}

\usepackage{paralist} 



\newcommand{\Z}{\ensuremath{\mathbb{Z}}}
\newcommand{\Q}{\ensuremath{\mathbb{Q}}}
\newcommand{\R}{\ensuremath{\mathbb{R}}}
\newcommand{\CC}{\ensuremath{\mathbb{C}}}

\newcommand{\gr}{\ensuremath{\mathrm{gr}}}

\newcommand{\Nrd}{\ensuremath{\mathrm{Nrd}\xspace}}	
\newcommand{\topwedge}{\ensuremath{\bigwedge^{\mathrm{max}}}}

\newcommand{\dd}{\ensuremath{\;\mathrm{d}}}
\newcommand{\Schw}{\ensuremath{\mathcal{S}}}	

\newcommand{\lrangle}[1]{\ensuremath{\langle #1 \rangle}}

\newcommand{\Stab}{\ensuremath{\mathrm{Stab}}}


\newcommand{\Hom}{\ensuremath{\mathrm{Hom}}}
\newcommand{\End}{\ensuremath{\mathrm{End}}}
\newcommand{\rightiso}{\ensuremath{\stackrel{\sim}{\rightarrow}}}


\newcommand{\Image}{\ensuremath{\mathrm{Im}\xspace}}
\newcommand{\dotimes}[1]{\ensuremath{\underset{#1}{\otimes}}}


\newcommand{\Ad}{\ensuremath{\mathrm{Ad}\xspace}}
\newcommand{\Spec}{\ensuremath{\mathrm{Spec}\xspace}}
\newcommand{\Gm}{\ensuremath{\mathbb{G}_\mathrm{m}}}

\newcommand{\utimes}[1]{\ensuremath{\overset{#1}{\times}}}
\newcommand{\Supp}{\ensuremath{\mathrm{Supp}}}

\newcommand{\GL}{\ensuremath{\mathrm{GL}}}

\newcommand{\Sp}{\ensuremath{\mathrm{Sp}}}


\theoremstyle{plain}
\newtheorem{proposition}{Proposition}
\newtheorem{lemma}[proposition]{Lemma}
\newtheorem{theorem}[proposition]{Theorem}
\newtheorem{corollary}[proposition]{Corollary}
\theoremstyle{definition}
\newtheorem{definition}[proposition]{Definition}
\newtheorem{definition-theorem}[proposition]{Definition-Theorem}
\newtheorem{definition-proposition}[proposition]{Definition-Proposition}
\newtheorem{hypothesis}[proposition]{Hypothesis}

\newtheorem{axiom}[proposition]{Axiom}
\newtheorem{example}[proposition]{Example}
\theoremstyle{remark}
\newtheorem{remark}[proposition]{Remark}
\numberwithin{equation}{section}

\newcommand{\mmap}{\ensuremath{\bm{\mu}}}	
\newcommand{\quoted}[1]{\ensuremath{\text{\textquotedblleft} #1 \text{\textquotedblright}}}	
\newcommand{\dcate}[1]{\ensuremath{\text{-}\mathsf{#1}}}	

\renewcommand{\emptyset}{\ensuremath{\varnothing}}	

\usepackage{amsmath}
\usepackage[nottoc]{tocbibind}

\usepackage{geometry}
\geometry{
	paper=a4paper,
	top=3cm,
	inner=2.54cm,
	outer=2.54cm,
	bottom=3cm,
	headheight=5ex,
	headsep=5ex,
}

\usepackage{lmodern}

\numberwithin{proposition}{section}

\title{Towards generalized prehomogeneous zeta integrals}
\author{Wen-Wei Li}
\date{}

\DeclareFieldFormat{postnote}{#1}	
\addbibresource{PVS.bib}

\begin{document}

\maketitle

\begin{abstract}
	Let $X$ be a prehomogeneous vector space under a connected reductive group $G$ over $\R$. Assume that the open $G$-orbit $X^+$ admits a finite covering by a symmetric space. We study certain zeta integrals involving (i) Schwartz functions on $X$, and (ii) generalized matrix coefficients on $X^+(\R)$ of Casselman--Wallach representations of $G(\R)$, upon a twist by complex powers of relative invariants. This merges representation theory with prehomogeneous zeta integrals of Igusa et al. We show their convergence in some shifted cone, and prove their meromorphic continuation via the machinery of $b$-function together with V.\ Ginzburg's results on admissible $D$-modules. This provides some evidence for a broader theory of zeta integrals associated to affine spherical embeddings.
\end{abstract}

{\scriptsize
\begin{tabular}{ll}
	\textbf{MSC (2010)} & Primary 22E50; Secondary 11F70, 11S90 \\
	\textbf{Keywords} & Zeta integrals, Schwartz spaces, prehomogeneous vector spaces
\end{tabular}}

\tableofcontents

\section{Introduction}
It is becoming clear that many of the zeta integrals arising from the theory of automorphic representations, in both its local and global aspects, can be understood in terms of embeddings of homogeneous spaces. The global setting is discussed by Sakellaridis in \cite{Sak12}. Braverman and Kazhdan \cite{BK00} also tried to extend the Godement--Jacquet theory to more general $L$-functions via reductive monoids, which is also pursued by L.\ Lafforgue \cite{Laf14}. Another attempt at a general local framework can be found in \cite{Li15}. In all these approaches, one considers a connected reductive group $G$ over a suitable field $F$, a \emph{spherical homogeneous $G$-space} $X^+$, meaning that it has an open Borel orbit, together with an equivariant embedding $X^+ \hookrightarrow X$ such that $X$ is an \emph{affine normal} $G$-variety in which $X^+$ is open dense. For the Godement--Jacquet theory, the embedding in question is just $D^\times \hookrightarrow D$ which is $D^\times \times D^\times$-equivariant, where $D$ stands for a central simple $F$-algebra.

Assume $F$ is local of characteristic zero. All these zeta integrals involve a space of test functions on $X(F)$, called the \emph{Schwartz space}. For the Godement--Jacquet case $X$ is a vector space, so the evident candidate is the usual Schwartz--Bruhat space. The general case is much more delicate, and this is related to the singularity of $X$; see \cite{BNS16} for the unramified geometric aspect. Thus a natural approach is to look at the case of smooth $X$ first.

Suppose that $(\rho, X)$ is an algebraic, finite-dimensional representation of $G$. If there is an open dense $G$-orbit $X^+$ in $X$, we say the triplet $(G, \rho,X)$ is a \emph{prehomogeneous vector space}. These objects have been studied in depth by M. Sato and his school. This fits into the previous framework when $X^+$ is spherical, and $X$ is surely smooth. It turns out that this is the typical case of smooth affine embeddings: Luna \cite[pp.98-99]{Lu73} proved that smooth affine spherical $G$-varieties are fibered in prehomogeneous vector spaces. We remark that the prehomogeneous vector spaces with spherical open $G$-orbit have been classified in \cite{BR96,Le98} over algebraically closed field of characteristic zero.

Given $(G, \rho, X)$ as above, the Schwartz space $\Schw(X)$ is available. We also assume $X^+$ is affine. Then $\partial X := X \smallsetminus X^+$ is the union of the zero loci of \emph{basic relative invariants} $f_1, \ldots, f_r \in F[X]$: they are $G$-eigenfunctions, with eigencharacters $\omega_1, \ldots, \omega_r \in \Hom_{\text{alg.grp.}}(G, \Gm)$. The characterization of basic relative invariants will be given in Proposition \ref{prop:relative-invariants}. We set
\[ \Lambda_\Z := \bigoplus_{i=1}^r \Z\omega_i, \quad \Lambda_\R := \Lambda_\Z \otimes \R, \quad \Lambda_\CC := \Lambda_\Z \otimes \CC. \]
Also set $|f|^\lambda := \prod_{i=1}^r |f_i|^{\lambda_i}$ if $\lambda = \sum_{i=1}^r \omega_i \otimes \lambda_i \in \Lambda_\CC$: this is a smooth function on $X(F)$. Write $\Re(\lambda) \gg_X 0$ to mean $\Re(\lambda_i) \gg 0$ for $i=1,\ldots,m$.

On the other hand, consider an irreducible smooth complex representation $(\pi, V_\pi)$ of $G(F)$ in a suitable category. When $F=\R$, the natural choice are the \emph{SAF representations} (smooth admissible Fréchet of moderate growth), also known as Casselman--Wallach representations; see \cite{BK14}. Denote the continuous $\Hom$ space $\mathcal{N}_\pi := \Hom_G(\pi, C^\infty(X^+))$. The zeta integral we envisage takes the form
\[ Z_\lambda(\eta,v,\xi) := \int_{X^+(F)} \eta(v)|f|^\lambda \xi, \quad \eta \in \mathcal{N}_\pi, \; v \in V_\pi, \; \xi \in \Schw(X) \]
where $\lambda \in \Lambda_\CC$ and we assume $\Re(\lambda) \gg_X 0$. Furthermore we take $\xi$ and $\eta(v)$ to have values in \emph{half-densities}, so that their product is a density (i.e.\ measure) whose integral makes sense without making any choice. To do this we have to modify $\mathcal{N}_\pi$ and $\Schw(X)$ to be $\mathscr{L}^{1/2}$-valued, where $\mathscr{L}^{1/2}$ is the $G$-equivariant line bundle of half-densities. This presents no difficulty since $\mathscr{L}^{1/2}$ turns out to be equivariantly trivializable in our case (Lemma \ref{prop:density-trivialization}). This choice also leads to natural normalizations, see Example \ref{eg:Godement-Jacquet}.

When $(G, \rho, X)$ is as in Godement--Jacquet theory, then $r=1$ and $f \in F[X]$ can be taken to be the reduced norm. Moreover, $\dim_\CC \mathcal{N}_\pi \leq 1$ with equality only when $\pi \simeq \tau \boxtimes \check{\tau}$, in which case a generator $\eta$ is given by matrix coefficients. Hence we get the familiar Godement--Jacquet integral up to some shift caused by half-densities. When $(G, \rho, X)$ is general but $\pi = \text{triv}$, we obtain the \emph{prehomogeneous zeta integrals} studied by M.\ Sato, T.\ Shintani (see \cite{Sa89,Ki03}), Igusa \cite{Ig00} et al. In both cases, such integrals encode a wealth of information.

Turning back to the general case, several questions arise immediately.
\begin{enumerate}[(a)]
	\item Convergence of $Z_\lambda(\eta,v,\xi)$ for $\Re(\lambda) \gg_X 0$.
	\item Meromorphic continuation in $\lambda$, it should even be rational when $F$ is non-Archimedean.
	\item Continuity in $v$ and $\xi$, when $F$ is Archimedean.
\end{enumerate}
For the Godement--Jacquet integrals, (a) and (b) have been solved in \cite{GJ72}, but the $v$ and $\xi$ are somehow constrained for Archimedean $F$. For the prehomogeneous zeta integrals ($\pi=\text{triv}$), all these properties are established (see \cite{Ki03,Sa89}). When $F$ is non-Archimedean, $G$ is split and $X^+$ is wavefront, (a) and (b) are verified in \cite[\S 6.2]{Li15}: the proof uses the asymptotics for generalized matrix coefficients $\eta(v)$ in \cite{SV12}, the Cartan decomposition for $X^+$ as well as Igusa's theory.

One may even consider the functional equation with respect to the Fourier transform $\mathcal{F}: \Schw(X) \rightiso \Schw(\check{X})$. General speculations can be found in \cite{Li15}, but this is beyond the scope of this article.

Let us move to the main topic of this article --- the case $F=\R$.
\begin{hypothesis}\label{hyp:overall}
	Take $F=\R$ and assume that $(G, \rho,X)$ is a prehomogeneous vector space such that $G$ is a connected reductive group and the open $G$-orbit $X^+$ admits a finite equivariant covering by a symmetric space $X^+_0$ under $G$. In this case we say $X^+$ is an \emph{essentially symmetric space}.
\end{hypothesis}
Tools from harmonic analysis are thus available to us. For example, in the Godement--Jacquet case $X^+$ is just the ``group case'' of symmetric spaces. More instances can be found in \S\ref{sec:PVS}. The main results are summarized below.

\begin{theorem}
	Under the Hypothesis \ref{hyp:overall}, fix an irreducible SAF representation $(\pi, V_\pi)$ and $\eta \in \mathcal{N}_\pi$.
	\begin{enumerate}[(i)]
		\item There exists $\kappa = \sum_{i=1}^r \omega_i \otimes \kappa_i \in \Lambda_\R$, depending solely on $\pi$, such that $Z_\lambda(\eta, v, \xi)$ is convergent whenever $\Re(\lambda_i) \geq \kappa_i$ for $i=1, \ldots, m$. It defines a holomorphic function in $\lambda$ with values in the continuous dual $(V_\pi \hat{\otimes} \Schw(X))^\vee$ in the range of convergence.
		\item One can choose a holomorphic function of the form $L(\eta,\lambda) = \prod_{i=1}^m \Gamma(\alpha_i(\lambda))^{-1}$ where $\alpha_i$ are affine functions on $\Lambda_\CC$ with $\Q$-rational gradients, such that
		\[ LZ_\lambda(\eta,v,\xi) := L(\eta,\lambda) Z_\lambda(\eta,v,\xi) \]
		can be holomorphically extended to all $\lambda \in \Lambda_\CC$, and this gives rise to a holomorphic function valued in $(V_\pi \hat{\otimes} \Schw(X))^\vee$.
		\item For each $\lambda$, the pairing $(v,\xi) \mapsto LZ_\lambda(\eta, v, \xi)$ is a $G(\R)$-invariant bilinear form on $(\pi \otimes |\omega|^\lambda) \times \Schw(X)$. Here $|\omega|^\lambda := \prod_{i=1}^r |\omega_i|^{\lambda_i}$.
	\end{enumerate}
\end{theorem}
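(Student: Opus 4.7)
The plan is to treat the three parts in order: (i) reduces to known asymptotic estimates on symmetric spaces; (ii) is obtained by invoking the $b$-function formalism via Ginzburg's admissible $D$-modules, as advertised in the abstract; and (iii) follows by analytic continuation from an essentially tautological identity valid in the region of convergence.

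For (i), I would use Hypothesis \ref{hyp:overall} to pull $\eta(v)$ back along the finite equivariant covering $X^+_0 \to X^+$; the result is a generalized matrix coefficient on the symmetric space $X^+_0(\R)$. The asymptotic expansions of Casselman--Wallach and van den Ban--Schlichtkrull, combined with a polar-type decomposition of $X^+_0(\R)$, bound $|\eta(v)|$ by a finite sum of terms of the shape $\prod_i |f_i|^{c_i}$ times slowly varying factors, with exponents $c_i$ controlled purely by the leading exponents of $\pi$. Setting $\kappa_i := -c_i$ and combining with the rapid decrease of $\xi \in \Schw(X)$ and the polynomial weight $|f|^\lambda$ gives absolute convergence for $\Re(\lambda_i) \geq \kappa_i$. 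Holomorphy in $\lambda$ then follows from Morera and dominated convergence, and separate continuity in $v$ and $\xi$ together with the nuclearity of $\Schw(X)$ yields continuity on $V_\pi \hat{\otimes} \Schw(X)$.

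For (ii), I would recognize $\lambda \mapsto \eta(v)\,|f|^\lambda$ (for fixed $v$) as a meromorphic family of distributions on $X^+(\R)$ taking values in a $G$-equivariant $D$-module that is admissible in Ginzburg's sense. The structural results on such modules supply, for each basic invariant $f_i$, a functional equation
\[
	b_i(\lambda)\, \eta(v)\,|f|^\lambda \;=\; D_i(\lambda) \cdot \bigl(\eta(v)\, |f|^{\lambda + \omega_i}\bigr),
\]
where $b_i$ is a polynomial whose roots are affine-linear in $\lambda$ with $\Q$-rational gradients, and $D_i(\lambda)$ is a $\lambda$-polynomial family of algebraic differential operators. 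Pairing with $\xi$ and transferring $D_i(\lambda)$ to the Schwartz test function by integration by parts (justified inside the convergence region, since $\Schw(X)$ is stable under algebraic differential operators) converts this to
\[
	b_i(\lambda)\, Z_\lambda(\eta,v,\xi) \;=\; Z_{\lambda + \omega_i}\bigl(\eta, v, D_i^*(\lambda)\xi\bigr).
\]
Iterating these identities propagates $Z_\lambda$ throughout $\Lambda_\CC$ and confines its poles to a locally finite union of affine hyperplanes $\alpha_i(\lambda) = 0$. Defining $L(\eta,\lambda) := \prod_i \Gamma(\alpha_i(\lambda))^{-1}$, whose zeros cancel precisely these poles, produces the asserted entire extension; continuity in $(v,\xi)$ is preserved at each step since the functional equations involve only continuous operations on these arguments.

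For (iii), in the convergence region the identity $Z_\lambda(\eta, \pi(g)v, g\cdot\xi) = |\omega(g)|^{-\lambda} Z_\lambda(\eta,v,\xi)$ is a direct change of variables using the $G$-equivariance of $\eta$, the relative invariance $f_i \circ g^{-1} = \omega_i(g)\, f_i$, and the $G$-invariance of the measure resulting from the half-density normalizations of Lemma \ref{prop:density-trivialization}. Multiplying by $L(\eta,\lambda)$ and invoking the meromorphic continuation of (ii), the invariance persists over all $\Lambda_\CC$. The main obstacle is step (ii): the technical heart lies in verifying that the $D$-module generated by $\eta(v)\,|f|^\lambda$ truly falls under Ginzburg's admissibility hypothesis for essentially symmetric $X^+$, extracting the functional equation uniformly in $v$, and bridging the analytic distributional framework with algebraic $D$-modules on $X$.
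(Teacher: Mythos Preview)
Your outline for (i) and (iii) is broadly compatible with the paper's, but there is a real gap in (ii) concerning the passage from $K$-finite vectors to all of $V_\pi$ and the continuity in $v$. Ginzburg's admissibility criterion requires that the generator $u = \eta(v)$ be locally finite under $\mathcal{U}(\mathfrak{k}_\CC)$; this holds only for $v \in V_\pi^{K\text{-fini}}$, not for arbitrary smooth vectors. So the $b$-function machinery only produces the meromorphic continuation of $Z_\lambda(\eta, v, \cdot)$ for each $K$-finite $v$ separately, and the operator $P(\lambda)$ in the functional equation $P(\lambda)(u f^s) = b(s)\, u f^{s-a}$ depends on $v$ through the holonomic module $D_{X^+_\CC} u$. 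Your claim that ``continuity in $(v,\xi)$ is preserved at each step since the functional equations involve only continuous operations'' therefore fails: the $b$-function step is not a continuous operation in $v$, nor is it even defined for non-$K$-finite $v$.

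The paper supplies two ingredients you are missing. First, a single $L(\eta,\lambda)$ is chosen that works for \emph{all} $K$-finite $v$, by observing that if $L$ clears the poles of $u|f|^\lambda$ then it also clears those of $(D_{X_\CC} u)|f|^\lambda$, and $V_\pi^{K\text{-fini}}$ is finitely generated over $\mathcal{U}(\mathfrak{g}_\CC)$. Second, to extend $LZ_\lambda$ to all $v \in V_\pi$ with continuity, one writes $v = \sum_i \pi(\Xi_i) v_i$ with $\Xi_i \in \Schw(G)$ and $v_i \in V_\pi^{K\text{-fini}}$ (using $\pi(\Schw(G))V_\pi^{K\text{-fini}} = V_\pi$), transposes the $\Xi_i$-action to the $\xi$-slot via $Z_\lambda(\eta, \pi(\Xi)v, \xi) = Z_\lambda(\eta, v, \check{\Xi}\xi)$ in the convergence region, and then propagates continuity in $v$ from that region to all of $\Lambda_\CC$ by a Gelfand--Shilov analytic-continuation principle for barreled spaces. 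A smaller point in (i): to get continuity in $v$ already in the convergence region you need the uniform bound $|\eta(v)(x)| \leq q(v)\, p(x)$ with $q$ a continuous seminorm on $V_\pi$ and $p$ a Nash function on $X^+(\R)$, not merely an asymptotic expansion for each fixed $v$; this is exactly what the paper's Theorem~\ref{prop:asymptotic} provides.
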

For complete statements, please consult the Theorems \ref{prop:zeta-convergence}, \ref{prop:meromorphy}, \ref{prop:meromorphy-1}. Holomorphy here is understood in the weak sense, eg. $\lambda \mapsto LZ_\lambda(\eta,v,\xi)$ is holomorphic for each $(v,\xi)$. Therefore one can say $Z_\lambda$ admits a meromorphic continuation with a ``denominator'' $L(\eta,\lambda)$. Note that in the Godement--Jacquet case, this improves upon the original statements in \cite{GJ72}: we obtain continuity properties, whereas $\xi$ is arbitrary and $v \in V_\pi$ is not required to be $K$-finite. Here $K \subset G(\R)$ is a maximal compact subgroup.

We regard prehomogeneous vector spaces mainly as a testing ground. The real aim of this article is to probe for the general techniques for studying generalized Archimedean zeta integrals and locate the essential difficulties, thereby giving support to the viewpoint of \cite{Li15}. Some similar zeta integrals have been studied in \cite{BR05}, which is based on case-by-case discussion with explicit computations. By the way, the results here also complement for some missing details in \cite{Li15} in the Archimedean setting.

Let us sketch the techniques. The convergence of $Z_\lambda$ for $\Re(\lambda) \gg_X 0$ is based on an estimate on the generalized matrix coefficients $u := \eta(v)$ that is uniform in $v$; such a result is furnished by \cite{KSS14}. The framework of Nash functions turns out to be a flexible vehicle for such considerations.

As for the meromorphy and the description of the denominator $L(\eta,\lambda)$, we follow the standard approach via Bernstein--Sato $b$-functions. Grosso modo, its principal input is the \emph{holonomicity} of the $D_{X^+_\CC}$-module $\mathscr{M}'$ generated by $u$. The notion of \emph{admissible $D$-modules} introduced by V.\ Ginzburg \cite{Gin89} seems well-suited for this purpose; the assumption that $X^+$ is essentially symmetric intervenes here. To this end we assume $v \in V_\pi^{K\text{-fini}}$ and invoke Ginzburg's result. These observations will yield a meromorphic family of tempered distributions on $X(\R)$ that restricts to $u|f|^\lambda$ on $X^+(\R)$; we refer to \cite{GSS16} for an illustration of relevant ideas and applications. Here we request more: it should coincide with the convergent integral $\xi \mapsto Z_\lambda(\eta,v,\xi)$ when $\Re(\lambda) \gg_X 0$. Fortunately, the necessary arguments have been well explained in \cite[Appendice A]{BD92}. This step applies to a single $v \in V_\pi^{K\text{-fini}}$: it says nothing about continuity in $v$.

Note the the holonomicity in the case $\pi = \text{triv}$ considered by Sato, Shintani et al.\ is immediate. Indeed, $u$ is locally constant in this case, hence $\text{Ch}(\mathscr{M}')$ is nothing but the zero section of $T^* X^+$.

The final step is to extend this family to all $v \in V_\pi$ and show that continuity is not lost. To go from $V_\pi^{K\text{-fini}}$ to $V_\pi$ we make use of action of the Schwartz algebra $\Schw(G)$ on $V_\pi$, observing that the action can also be transposed to $\xi \in \Schw(X)$ in the range of convergence. To propagate continuity to all $\lambda \in \Lambda_\CC$, we prove and apply a mild generalization to the \emph{principle of analytic continuation} of Gelfand--Shilov \cite[Chapter I, A.2.3]{GS1}.

To allow more general $X^+$ in this framework, one has to extend Ginzburg's result on the holonomicity beyond essentially symmetric spaces. We hope to address this issue in the future. In contrast, the case of non-prehomogeneous $X$ is more delicate because a general definition of Schwartz spaces is not yet available.

\subsection*{Organization of this article}
In \S\ref{sec:harmonic-analysis} we collect the necessary preliminaries on generalized matrix coefficients of an SAF representation on an essentially symmetric space. In \S\ref{sec:PVS} we make a brief review of prehomogeneous vector spaces and sets up the geometric framework (Axiom \ref{axiom:PVS}). Several non-trivial examples of prehomogeneous vector spaces are also presented; they should be compared with the examples of \cite[\S 5.2]{Sak12} in the global setting. In \S\S\ref{sec:holonomicity}---\ref{sec:admissibility} the results on $b$-functions and admissible $D$-modules are recast to our purposes. The zeta integral is defined in \S\ref{sec:zeta-integral} and its convergence for $\Re(\lambda) \gg_X 0$ is proved. After the intermezzo \S\ref{sec:Schwartz-action} on Schwartz algebra actions, we prove the meromorphic continuation of zeta integrals for $K$-finite vector in \S\ref{sec:meromorphy}, then extend to all $v \in V_\pi$ and establish its continuity. A useful result from Gelfand--Shilov (Theorem \ref{prop:GS-principle}) is generalized and proved in \S\ref{sec:meromorphy}; it has also been used in \cite{Li15}.

\subsection*{Acknowledgement}
The author is grateful to Jeffrey Adams, Wee Teck Gan, Dihua Jiang, Eitan Sayag and Jun Yu for inspiring conversations. The author also appreciates the referee's pertinent comments.

\subsection*{Conventions}
For a scheme $Z$ over a ring $R$ and an $R$-algebra $S$, we write $Z(S)$ for the set of its $S$-points; when $S=\R, \CC$, it comes with the usual topology. In this article, varieties over a field $F$ are always reduced. The tangent (resp. cotangent) bundle of a smooth variety $Z$ is denoted by $TZ$ (resp. $T^* Z$). The coordinate algebra (resp. function field) of an $F$-variety $Z$ is denoted by $F[Z]$ (resp. $F(Z)$ if $Z$ is irreducible).

Let $G$ be an affine algebraic group over some field $F$. We will write $G^\circ$ for its identity connected component, and write $X^*(G) := \Hom_\text{alg.grp.}(G, \Gm)$ for the character lattice. The symbols $\GL(n)$, $\GL(V)$ denote the general linear groups, where $V$ is a finite-dimensional vector space. We also write $\bm{\mu_m}$ for the group scheme of $m$-th roots of unity. Denote by $Z_G$ (resp. $G_\text{der}$) the center (resp. derived subgroup) of $G$. For an $F$-automorphism $\sigma: G \to G$, its fixed subgroup is denoted by $G^\sigma$. The adjoint action is $\Ad(g): x \mapsto gxg^{-1}$. 

Lie algebras are denoted using $\mathfrak{gothic}$ letters. For a Lie algebra $\mathfrak{g}$ over some field, we write $\mathcal{U}(\mathfrak{g})$ for its universal enveloping algebra and $\mathcal{Z}(\mathfrak{g})$ for the center of $\mathcal{U}(\mathfrak{g})$. The fixed subalgebra under an automorphism $\sigma: \mathfrak{g} \to \mathfrak{g}$ is again denoted as $\mathfrak{g}^\sigma$.

Unless otherwise specified, a $G$-variety means a variety $Z$ with right $G$-action, so that $G$ has a left action on $F[Z]$; the stabilizer of $x$ is denoted as $\Stab_G(x)$. Homogeneous $G$-space means a $G$-variety with a single orbit.

The dual of a vector space $V$ is denoted by $V^\vee$, except that for a Lie algebra $\mathfrak{g}$ we write $\mathfrak{g}^*$; the canonical pairing is written as $\lrangle{\cdot, \cdot}$. For $W \subset V^\vee$ we write $W^\perp := \{ T \in V^\vee : T|_W = 0 \}$. Topological vector spaces over $\CC$ are always locally convex and Hausdorff; for $V$ a topological vector space, $V^\vee$ will denote the continuous dual. When talking about continuous representations of a topological group $\Gamma$, the symbol $\Hom_\Gamma(V_1, V_2)$ will mean the space of continuous intertwining operators $V_1 \to V_2$. The trivial representation is denoted as $\text{triv}$.

We will consider maps of the form
\begin{align*}
	T: \Omega & \longrightarrow V^\vee \\
	\lambda & \longmapsto T_\lambda
\end{align*}
where $\Omega$ is a complex manifold and $V$ is a topological vector space. We say $T$ is holomorphic if so is $\lambda \mapsto T_\lambda(v)$ for every $v \in V$. By a meromorphic family $T: \Omega \to V^\vee$, we mean a map $T: \Omega \to V^\vee$ defined almost everywhere such that locally on $\Omega$, there exists a scalar-valued holomorphic function $L(\lambda)$ whose product with $T$ is holomorphic as above.

Let $\Lambda_\R$ be an $\R$-vector space of finite dimension and $\Lambda_\CC := \Lambda_\R \otimes \CC$. Let $U \subset \Lambda_\R$ be open. A holomorphic function $f: \Re^{-1}(U) \to \CC$ is said to be bounded in vertical strips if for each compact subset $C \subset U$, $f$ is bounded over $\Re^{-1}(C)$.

\section{Harmonic analysis on symmetric spaces}\label{sec:harmonic-analysis}
Throughout this section, we fix a connected reductive group $G$ over a field $F$ of characteristic not two. We will assume $F=\R$ later on. By a \emph{finite covering} of a homogeneous $G$-space $Z$, we mean a finite, $G$-equivariant étale covering $\pi: Z_0 \to Z$ by a homogeneous $G$-space $Z_0$.

\begin{definition}
	In this article, by a \emph{spherical variety} over $F$ we mean a normal $G$-variety $Z$ with an open Borel orbit over an algebraic closure of $F$.
\end{definition}

\begin{definition}\label{def:symm-space}
	Let $X^+ \simeq H \backslash G$ be a homogeneous $G$-space with $X^+(F) \neq \emptyset$. By choosing $x_0 \in X^+(F)$ we may write $X^+ \simeq H \backslash G$ with $H := \Stab_G(x_0)$. If there exists an involution $\theta: G \to G$ such that $(G^\theta)^\circ \subset H \subset G^\theta$, we say $X^+$ is a \emph{symmetric space}.
	
	More generally, if $X^+$ admits a finite covering $\pi: X^+_0 \to X^+$ such that $X^+_0$ is a symmetric space, we say $X^+$ is an \emph{essentially symmetric space}. By choosing a base point in $X^+_0(F)$, we deduce a base point in $X^+(F)$ and the covering may be expressed as $H_0 \backslash G \twoheadrightarrow H \backslash G$ with $(G^\theta)^\circ = H^\circ \subset H_0 \subset H$.
\end{definition}
We also say that the stabilizer $H$ is a symmetric (resp. essentially symmetric) subgroup of $G$. This notion is independent of base point by virtue of the following observation.

\begin{lemma}\label{prop:involution-vs-H}
	Suppose $X^+$ is essentially symmetric and $x \in X^+(F)$ has stabilizer $J$. There exists an involution $\tau: G \to G$ defined over $F$ such that $J^\circ = (G^\tau)^\circ$. If $X^+$ is symmetric then $J \subset G^\tau$.
\end{lemma}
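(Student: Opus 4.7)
The plan is to write $\tau$ explicitly over $\bar{F}$ as a conjugate of the involution $\theta$ supplied by the (essentially) symmetric cover, and then to descend to $F$ via a uniqueness argument at the Lie algebra level. Fix a base point $x_0 \in X^+(\bar{F})$ whose stabilizer $H$ satisfies $H^\circ = (G^\theta)^\circ$ (with $H \subset G^\theta$ in the symmetric case, by Definition \ref{def:symm-space}). For $x \in X^+(F)$, transitivity of the $G(\bar{F})$-action lets us write $x = x_0 \cdot g$ for some $g \in G(\bar{F})$, so that $J = g^{-1} H g$ over $\bar{F}$. Setting
\[
	\tau := \Ad(g^{-1}) \circ \theta \circ \Ad(g),
\]
a direct check shows $\tau^2 = \identity$, that $G^\tau = g^{-1} G^\theta g$, and hence $(G^\tau)^\circ = g^{-1} H^\circ g = J^\circ$. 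In the symmetric case one further reads off $J = g^{-1} H g \subset g^{-1} G^\theta g = G^\tau$, yielding the final clause.

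What remains is to show this $\tau$ is actually defined over $F$, even though the element $g$ generally is not. The idea is to prove that $d\tau$ is intrinsically determined by $J$ alone, via a $G$-invariant bilinear form. Since $G$ is reductive, fix a non-degenerate $G$-invariant symmetric bilinear form $B$ on $\mathfrak{g}$ defined over $F$ (for instance, the Killing form on $\Lie(G_\mathrm{der})$ combined with any non-degenerate form on the center). By $G$-invariance, the $\pm 1$ eigenspace decomposition of $d\tau$ on $\mathfrak{g}_{\bar{F}}$ is $B$-orthogonal; its $+1$ eigenspace is $\mathfrak{j} := \Lie(J^\circ)$, which is already defined over $F$, so the $-1$ eigenspace is forced to coincide with $\mathfrak{j}^\perp$, again defined over $F$. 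Hence $d\tau \in \End(\mathfrak{g})$ is defined over $F$.

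Finally, since $G$ is connected, an automorphism of $G_{\bar{F}}$ is determined by its derivative on $\mathfrak{g}_{\bar{F}}$; in particular $\tau$ is the unique lift of $d\tau$. For each element $\sigma$ of the Galois group $\mathrm{Gal}(\bar{F}/F)$, the Galois conjugate $\sigma(\tau)$ is a lift of $\sigma(d\tau) = d\tau$, so $\sigma(\tau) = \tau$ and $\tau$ descends to $F$, as required. I expect the main technical point to be ensuring that $B|_{\mathfrak{j}}$ is non-degenerate so that the orthogonal splitting $\mathfrak{g} = \mathfrak{j} \oplus \mathfrak{j}^\perp$ is indeed valid over $F$; this in turn is guaranteed by the $B$-orthogonality of the Cartan decomposition over $\bar{F}$ combined with non-degeneracy of $B$ on all of $\mathfrak{g}$.
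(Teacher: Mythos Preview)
Your strategy---define $\tau$ over $\bar F$ as a conjugate of $\theta$ and then descend---is exactly what the paper does. The descent step, however, is organized differently. The paper splits $G = Z_G \cdot G_{\mathrm{der}}$: on $Z_G$ one has $\tau|_{Z_G} = \theta|_{Z_G}$ since $\Ad(g)$ is trivial there, and on $G_{\mathrm{der}}$ the paper invokes a result of Helminck--Wang to the effect that an involution of a connected semisimple group is determined over $F$ once the identity component of its fixed subgroup is. Your Lie-algebra route via an invariant bilinear form is a self-contained substitute for that citation and is essentially valid, with the advantages of being elementary and avoiding the external reference.

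Two small points to tighten. First, ``$G$-invariance'' of $B$ is not the reason the $\pm 1$-eigenspaces of $d\tau$ are $B$-orthogonal: $\tau$ need not be inner. What you actually need is that $d\tau$ preserves $B$; after stripping off the inner factors $\Ad(g^{\pm1})$ this reduces to $d\theta$ preserving $B$. On $\mathfrak{g}_{\mathrm{der}}$ this is automatic for the Killing form, since every Lie-algebra automorphism preserves it; on $\mathfrak z$ it fails for an \emph{arbitrary} non-degenerate form, so you must choose one that is $d\theta|_{\mathfrak z}$-invariant (e.g.\ declare the $\pm 1$ eigenspaces of $d\theta|_{\mathfrak z}$, which are $F$-rational since $\theta$ is, to be orthogonal). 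Second, your closing worry is unnecessary: once $d\tau$ preserves $B$ you get $\mathfrak g^- \subset \mathfrak j^\perp$, and since $B$ is non-degenerate on all of $\mathfrak g$ a dimension count gives $\dim \mathfrak j^\perp = \dim \mathfrak g - \dim \mathfrak j = \dim \mathfrak g^-$, hence $\mathfrak g^- = \mathfrak j^\perp$ without any appeal to non-degeneracy of $B|_{\mathfrak j}$.
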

\begin{proof}
	Fix a separable closure $\bar{F}$ of $F$. Let $H = \Stab_G(x_0)$ as before. There exists $g \in G(\bar{F})$ such that $x = x_0 g$, therefore $J = g^{-1}Hg$ over $\bar{F}$. Define the involution over $\bar{F}$
	\[ \tau := \Ad(g)^{-1} \circ \theta \circ \Ad(g). \]
	After base change to $\bar{F}$, we have $G^\tau = g^{-1} G^\theta g$ and $J^\circ = (G^\tau)^\circ$. Furthermore, $H \subset G^\theta \implies J \subset G^\tau$ over $\bar{F}$. It suffices to show $\tau$ is defined over $F$. Obviously $\tau|_{Z_G} = \theta|_{Z_G}$ after base change to $\bar{F}$. On the other hand, $(G_\text{der})^\tau$ has identity connected component equal to that of $J^\circ \cap G_\text{der}$, which is defined over $F$. Using \cite[Proposition 1.6]{HW93} we infer that $\tau|_{G_\text{der}}$ is defined over $F$ as well. As $G = Z_G \cdot G_\text{der}$, we see $\tau$ is defined over $F$.
\end{proof}

Symmetric spaces are affine and spherical, and in this case $H^\circ$ is reductive; see \cite[\S 26]{Ti11}. An essentially symmetric space $X^+$ is also spherical. As $\Gamma := H/H_0$ is finite, we see $X^+ \simeq X^+_0/\Gamma$ is also affine. The Luna--Vust theory for the spherical embeddings therefore applies to $X^+$, upon base-change to an algebraic closure of $F$. Such a general framework is not needed in this article, but it is beneficial to notice that symmetric spaces are \emph{wavefront}, that is, its valuation cone defined by Luna--Vust turns out to be the image of the anti-dominant Weyl chamber; see \cite[Theorem 26.25]{Ti11}. Essentially symmetric spaces are also wavefront since the valuation cone of $X^+_0$ surjects onto that of $X^+$, see \cite[\S 15.2]{Ti11}.

Henceforth we take $F=\R$.

Now enters representation theory. We work within the category of \emph{SAF representations} of $G(\R)$ as studied in \cite{BK14}, also known as \emph{Casselman--Wallach representations}. These are certain continuous representations $\pi$ of $G(\R)$ on a nuclear Fréchet space $V_\pi$ over $\CC$ whose topology is defined by a countable family of semi-norms. The morphisms are the continuous intertwining operators. Choose a maximal compact subgroup $K \subset G(\R)$. Then the $K$-finite vectors $V_\pi^{K\text{-fini}}$ form a dense subspace of $V_\pi$, and $V_\pi^{K\text{-fini}}$ is a Harish-Chandra module. The functor $(\pi, V_\pi) \mapsto (\pi, V_\pi^{K\text{-fini}})$ is an equivalence between the categories $\text{SAF}\dcate{Rep}$ and $\text{HC}\dcate{Mod}$. We refer to \cite{BK14} for detailed discussions.

For any smooth $G(\R)$-equivariant line bundle $\mathscr{L}$ over the manifold $X^+(\R)$. Denote by $C^\infty(X^+, \mathscr{L})$ the space of $C^\infty$-sections of $\mathscr{L}$ over $X^+(\R)$. Topologize $C^\infty(X^+, \mathscr{L})$ in the standard way (see \cite[2.2]{Be88}) to make it into a smooth continuous representation of $G(\R)$.

\begin{definition}
	Let $X^+$ be any homogeneous $G$-space and let $\mathscr{L}$ be a smooth $G(\R)$-equivariant line bundle over $X^+(\R)$. For any irreducible SAF representation $\pi$, define the $\CC$-vector space
	\[ \mathcal{N}_\pi(\mathscr{L}) := \Hom_G\left( \pi, C^\infty(X^+, \mathscr{L}) \right). \]
	If $\mathscr{L}$ is trivial, the abbreviation $\mathcal{N}_\pi$ will be used.
\end{definition}

This space is related to the notion of distinguished representations as follows. Decompose $X^+(\R)$ into a finite number of $G(\R)$-orbits $Y_0, \ldots, Y_m$ and choose $x_i \in Y_i$. Each $Y_i$ is isomorphic to some $H_i(\R) \backslash G(\R)$. By Lemma \ref{prop:involution-vs-H}, the subgroup $H_i$ is essentially symmetric (resp. symmetric) if $H$ is. For each $1 \leq i \leq m$, the stabilizer $H_i(\R)$ acts on the fiber $\mathscr{L}_{x_i}$ via a continuous character $\chi_i$. Frobenius reciprocity for continuous representations (see \cite[2.5]{Be88}) then yields
\[ \mathcal{N}_\pi(\mathscr{L}) \simeq \prod_{i=0}^m \Hom_G\left( \pi, C^\infty(Y_i, \mathscr{L}) \right) \simeq \prod_{i=0}^m \Hom_{H_i}( \pi|_{H_i(\R)}, \chi_i ). \]

\begin{theorem}\label{prop:N-fd}
	The space $\mathcal{N}_\pi(\mathscr{L})$ is finite-dimensional.
\end{theorem}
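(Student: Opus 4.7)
The plan is to combine the Frobenius reciprocity already sketched just before the statement with a well-known finite multiplicity theorem for real symmetric pairs. Since $X^+(\R)$ decomposes into a finite number of $G(\R)$-orbits $Y_0, \ldots, Y_m$ (standard consequence of the finiteness of $H^1(\R, \Stab_G(x_0))$), the displayed isomorphism already reduces the problem to showing that each factor $\Hom_{H_i(\R)}(\pi|_{H_i(\R)}, \chi_i)$ is finite-dimensional. Lemma \ref{prop:involution-vs-H} ensures that every stabilizer $H_i$ is essentially symmetric, so the task reduces to the following uniform statement: for any essentially symmetric subgroup $H \subset G$ and any continuous character $\chi$ of $H(\R)$, the space $\Hom_{H(\R)}(\pi|_{H(\R)}, \chi)$ is finite-dimensional.

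Next I would reduce from essentially symmetric to symmetric. By Definition \ref{def:symm-space}, there is an involution $\theta$ of $G$ with $H^\circ = (G^\theta)^\circ$, so $H^\circ$ is a symmetric subgroup. Since the quotient algebraic group $H/H^\circ$ is finite, so is $H(\R)/H^\circ(\R)$, and the restriction
\[ \Hom_{H(\R)}(\pi, \chi) \hookrightarrow \Hom_{H^\circ(\R)}\bigl(\pi, \chi|_{H^\circ(\R)}\bigr) \]
is injective. Thus it suffices to bound the right-hand side, which concerns a genuine symmetric pair.

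The final step would invoke the finite multiplicity theorem for real symmetric pairs: for an SAF representation $\pi$ of $G(\R)$, a symmetric subgroup $H^\circ$ of $G$ and a continuous character $\chi$ of $H^\circ(\R)$, the space $\Hom_{H^\circ(\R)}(\pi, \chi)$ is finite-dimensional. In the Harish--Chandra module category this goes back to van den Ban; in the SAF/smooth framework it is covered by the more general finite multiplicity theorems of Kobayashi--Oshima and Krötz--Sayag--Schlichtkrull for real spherical spaces. Since essentially symmetric spaces are in particular spherical (hence real spherical), either reference applies, and the Casselman--Wallach equivalence allows passage between the continuous Hom space and the Harish--Chandra module Hom space if needed.

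The main obstacle is not the geometric reduction, which is routine given Lemma \ref{prop:involution-vs-H}, but rather the finite multiplicity theorem itself, which is a deep result. My plan imports it as a black box; the only care required is to verify that the cited version applies to continuous characters of $H^\circ(\R)$ (not merely trivial ones) and to SAF representations, both of which are standard in the references mentioned.
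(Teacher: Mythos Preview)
Your proposal is correct and follows essentially the same route as the paper: reduce via the Frobenius reciprocity decomposition to finite-dimensionality of each $\Hom_{H_i(\R)}(\pi,\chi_i)$, then invoke a general finite multiplicity theorem. The paper simply cites \cite[Theorem A]{KO13} directly for each orbit $Y_i$; your intermediate reduction from $H$ to the symmetric subgroup $H^\circ$ is harmless but unnecessary, since the Kobayashi--Oshima result already covers real spherical pairs and hence essentially symmetric ones without that extra step.
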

\begin{proof}
	By the discussion above, it suffices to apply the general result \cite[Theorem A]{KO13} to each $Y_i$.
\end{proof}

We call the elements of $\Image\left[ \mathcal{N}_\pi \otimes V_\pi \to C^\infty(X^+, \mathscr{L}) \right]$ the \emph{generalized matrix coefficients} of $\pi$ on $X^+$. A crucial issue in harmonic analysis is to control their asymptotic behavior. To state the next result, we shall assume that $\mathscr{L}$ is trivializable as an equivariant line bundle over $X^+(\R)$, which covers the cases needed in this article.

\begin{definition}
	A \emph{Nash function} on the manifold of $\R$-points of a smooth $\R$-variety, or any connected component thereof, means a $C^\infty$ semi-algebraic function. See \cite{AG08, BCR98}. Notice that Nash functions form an algebra, and if $p$ is a positive Nash function on a Nash manifold, then so is $p^r$ for every $r \in \Q$.
\end{definition}

This notion will be mainly applied to $G(\R)$-orbits in $X^+(\R)$.

\begin{theorem}\label{prop:asymptotic}
	For any irreducible SAF representation $\pi$ and every $\eta \in \mathcal{N}_\pi$, there exist
	\begin{itemize}
		\item a continuous semi-norm $q$ on $V_\pi$, and
		\item a Nash function $p \geq 0$ on $X^+(\R)$
	\end{itemize}
	that satisfy
	\[ |\eta(v)(x)| \leq q(v) p(x), \quad v \in V_\pi, \; x \in X^+(\R). \]
\end{theorem}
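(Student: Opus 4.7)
The plan is to derive the estimate from the uniform bound for reductive symmetric spaces established in \cite{KSS14} by decomposing $X^+(\R)$ into its finitely many $G(\R)$-orbits and descending from a symmetric covering on each.

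First I would decompose $X^+(\R) = Y_0 \sqcup \cdots \sqcup Y_m$ into $G(\R)$-orbits; each $Y_i$ is open in $X^+(\R)$ by a dimension count, and hence also closed, so a nonnegative function on $X^+(\R)$ is Nash as soon as its restriction to every $Y_i$ is Nash. The Frobenius-type decomposition recorded just before Theorem \ref{prop:N-fd} gives components $\eta_i \in \Hom_G(\pi, C^\infty(Y_i))$ for each $i$. It therefore suffices to produce, for each $i$, a continuous semi-norm $q_i$ on $V_\pi$ and a Nash function $p_i \geq 0$ on $Y_i$ with $|\eta_i(v)(x)| \leq q_i(v) p_i(x)$; one then takes $q := \sum_i q_i$ and assembles the $p_i$ piecewise on the clopen pieces to obtain the global bound on $X^+(\R)$.

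Next, fix an orbit $Y_i \simeq H_i(\R) \backslash G(\R)$. By Lemma \ref{prop:involution-vs-H}, $H_i$ is essentially symmetric, so one obtains an $\R$-involution $\tau_i$ with $H_i^\circ = (G^{\tau_i})^\circ$ and a resulting finite $G(\R)$-equivariant étale Nash covering $\pi_i: Y_{i,0} \to Y_i$, where $Y_{i,0}$ is a component of the real locus of $(G^{\tau_i})^\circ \backslash G$ and hence a genuine symmetric space of $G(\R)$. Pulling back $\eta_i$ yields $\tilde{\eta}_i := \pi_i^* \eta_i \in \Hom_G(\pi, C^\infty(Y_{i,0}))$, and invoking the main uniform estimate of \cite{KSS14} for $\tilde{\eta}_i$ on the symmetric space $Y_{i,0}$ furnishes a continuous semi-norm $\tilde{q}_i$ on $V_\pi$ and a Nash function $\tilde{p}_i \geq 0$ on $Y_{i,0}$ with $|\tilde{\eta}_i(v)(\tilde{x})| \leq \tilde{q}_i(v) \tilde{p}_i(\tilde{x})$. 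Finally I would set $p_i(x) := \sum_{\tilde{x} \in \pi_i^{-1}(x)} \tilde{p}_i(\tilde{x})$; pushforward along a finite étale Nash covering preserves Nashness (verify by passing to the Galois closure and averaging over the deck group, which respects both the semi-algebraic and $C^\infty$ structure). Since $\eta_i(v)(x) = \tilde{\eta}_i(v)(\tilde{x})$ for any lift $\tilde{x}$, the desired bound $|\eta_i(v)(x)| \leq \tilde{q}_i(v) p_i(x)$ follows.

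The main obstacle is the precise form of the estimate furnished by \cite{KSS14}: one must verify that the semi-norm appearing there is continuous on $V_\pi$ in the Casselman--Wallach (SAF) topology, and that the majorant is genuinely a Nash function on the symmetric space rather than merely a polynomial in some invariant distance function---this is the ``uniform in $v$'' refinement alluded to in the introduction. By comparison, the orbit decomposition and the finite-étale descent in the Nash category are comparatively routine, the latter relying only on the fact that the symmetric cover $Y_{i,0}$ can be realized over $\R$ as a symmetric space of $G(\R)$, which is guaranteed by Lemma \ref{prop:involution-vs-H}.
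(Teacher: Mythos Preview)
Your overall architecture---orbit decomposition, appeal to \cite{KSS14} on each piece, and descent from a symmetric cover by averaging---is exactly the paper's. The descent step is handled the same way (the paper averages $\tilde p$ over the finite group $H^\circ(\R)\backslash H(\R)$ and then invokes Tarski--Seidenberg to verify semi-algebraicity of the resulting function on $H(\R)\backslash G(\R)$; your Galois-closure phrasing is an acceptable variant).

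The gap is precisely the point you flag as ``the main obstacle'' but do not resolve: the weight function $w$ produced in the proof of \cite[Theorem 5.8]{KSS14} is \emph{not} a Nash function, so you cannot simply ``invoke the main uniform estimate'' and obtain a Nash $\tilde p_i$. Concretely, $w = w_0\, w_1^{-d}$ where $w_0 = \prod_j w_{U_j}^{r_j}$ with $w_{U_j}(g) = \|u_j g\|$ for $H$-fixed vectors $u_j$ in finite-dimensional $G$-representations (these factors \emph{are} Nash), but the exponents $r_j$ are real rather than rational, and $w_1$ is built from $\log(c\,w'_0)$ for a similar product $w'_0 \geq 1$. The paper does the actual work here: first it absorbs the logarithmic factor via $w_1^{-d} \gg (w'_0)^{-1}$, reducing to a bound $|\eta(v)| \leq q(v) \prod_j \omega_j^{-s_j}$ with real $s_j$; then it uses the sandwich $\omega_j/(1+\omega_j) < \omega_j < 1+\omega_j$ to replace each real exponent by an integer one, manufacturing a genuine Nash majorant $p$. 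Without this inspection of the internal structure of the \cite{KSS14} weight, your step ``furnishes \ldots\ a Nash function $\tilde p_i$'' is unjustified, and that is where essentially all the content of the proof lies.
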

\begin{proof}
	We begin with the case of symmetric $X^+$. Fix one $G(\R)$-orbit $Y \simeq H(\R) \backslash G(\R)$ in $X^+(\R)$. In the proof of \cite[Theorem 5.8]{KSS14}, they constructed a ``weight function'' $w: Y \to \R_{>0}$, then defined the Banach space
	\[ E_w := \left\{ c \in C(Y) : \|c\|_w := \sup_{y \in Y} w(y)|c(y)| < +\infty \right\} \]
	endowed with norm $\|\cdot\|_w$, and showed that
	\begin{align*}
		V_\pi & \longrightarrow E_w \\
		v & \longmapsto \eta(v)|_Y
	\end{align*}
	is continuous. By inspecting the argument in \cite[p.246]{KSS14}, we see that $w = w_0 w_1^{-d}$ with
	\begin{itemize}
		\item $w_0 = w_{U_1}^{r_1} \cdots w_{U_k}^{r_k}$ where $U_i$ are finite-dimensional $\R$-algebraic representations of $G$ with $H$-fixed vector $u_i \neq 0$ and Euclidean norm $\|\cdot\|$, so that $w_{U_i}(g) = \|u_i g \|  > 0$ induce Nash functions on $Y$ (see \cite[(5.6)]{KSS14}), and $r_i \in \R$;
		\item $w_1 > 0$ is made up of $\log(c w'_0)$ for suitable $c \in \R_{>1}$, where $w'_0$ is a function constructed in the manner above that verifies $w'_0 \geq 1$;
		\item $d \in \Z_{\geq 0}$ is defined in \cite[p.235]{KSS14}.
	\end{itemize}
	These data depend only on $\pi$. Therefore $w_1^{-d} \gg (w'_0)^{-1}$, and $w = w_0 w_1^{-d} \gg w_{U_1}^{s_1} \cdots w_{U_k}^{s_k}$ for certain $s_i \in \R$. Hereafter write $\omega_i := w_{U_i}$. The continuity of $V_\pi \to E_w$ entails the existence of a continuous semi-norm $q$ on $V_\pi$ satisfying $|\eta(v)| \leq q(v) \prod_{i=1}^k \omega_i^{-s_i}$. For each $1 \leq i \leq k$, we have
	\[ \dfrac{\omega_i}{1 + \omega_i} < \omega_i <  1 + \omega_i, \]
	noting that both sides are still positive Nash functions on $Y$. Take any integer $t_i$.
	\begin{itemize}
		\item If $-s_i > 0$, we have $\omega_i^{-s_i} < (1 + \omega_i)^{-s_i} < (1 + \omega_i)^{t_i}$ whenever $t_i > -s_i$.
		\item If $-s_i < 0$, we have $\omega_i^{-s_i} < \left(\dfrac{\omega_i}{1 + \omega_i}\right)^{-s_i} < \left(\dfrac{\omega_i}{1 + \omega_i}\right)^{t_i}$ whenever $t_i < -s_i$.
	\end{itemize}
	Taking products yields a Nash function $p$ on $Y$ with $\prod_{i=1}^k \omega_i^{-s_i} < p$. Since there are only finitely many $G(\R)$-orbits, the required estimate on $X^+(\R)$ follows.

	Now assume $X^+$ is essentially symmetric. As before, we consider a single $G(\R)$-orbit written as
	\[ H(\R) \backslash G(\R) \simeq Y \subset X^+(\R), \]
	such that $H^\circ$ is a symmetric subgroup. The main ingredients in \textit{loc.\  cit.} are:
	\begin{enumerate}
		\item Cartan decomposition for $Y \simeq H(\R) \backslash G(\R)$, which also holds in the present case since $H \supset H^\circ$;
		\item results of Hoogenboom on the highest weights of $H^\circ$-spherical representations, see the proof of \cite[Lemma 5.7]{KSS14}.
	\end{enumerate}
	These yield an estimate of the form $|\eta(v)(x)| \leq q(v) \tilde{p}(\tilde{x})$, for all $H^\circ(\R) \backslash G(\R) \ni \tilde{x} \mapsto x$, where $\tilde{p} \geq 0$ is a Nash function on $H^\circ(\R) \backslash G(\R)$. The finite group $H^\circ(\R) \backslash H(\R)$ acts on the left of $H^\circ(\R) \backslash G(\R)$ via $H^\circ(\R)g \mapsto H^\circ(\R) hg$. Averaging $\tilde{p}$ over this action yields another Nash function on $H^\circ(\R) \backslash G(\R)$ that descends to a $C^\infty$ function $p \geq 0$ on $H(\R) \backslash G(\R)$. It remains to show that $p$ is semi-algebraic. Indeed, the graph of $p$ is the projection to $(H(\R) \backslash G(\R)) \times \R$ of the semi-algebraic set
	\[ \left\{ (x, \tilde{x}, t) : x \mapsfrom \tilde{x} \stackrel{p}{\longmapsto} t \right\} \subset (H(\R) \backslash G(\R)) \times (H^\circ(\R) \backslash G(\R)) \times \R. \]
	Projection preserves semi-algebraicity by the Tarski--Seidenberg principle (see \cite[Corollary 2.2.8]{AG08} or \cite[\S 2.2]{BCR98}). This completes the proof.
\end{proof}

It seems possible to generalize this to more general real spherical spaces. Cf.\ \cite[\S 6.3]{KKS15}.

\section{Prehomogeneous vector spaces}\label{sec:PVS}
To begin with, let $F$ be any field of characteristic zero. Fix a connected reductive $F$-group $G$. By convention, we let $G$ act from the right on its representations.

\begin{definition}
	A \emph{prehomogeneous vector space} under $G$ is a finite-dimensional $F$-vector space $X$ together with an algebraic representation $\rho: G \to \GL(X)$ such that $X$ contains an open dense orbit, hereafter denoted by $X^+$. We also write $\partial X := X \smallsetminus X^+$.
\end{definition}
For a systematic introduction to prehomogeneous vector spaces, we recommend \cite{Ki03}. We will often write $xg = x\rho(g)$ for the $G$-action on $X$. Then $G$ acts on $F(X)$ and $F[X]$ by $gf(x) = f(xg)$. Taking contragredient yields the dual datum $(\check{\rho}, \check{X})$, which is not prehomogeneous in general.

\begin{definition}
	Let $(G, \rho, X)$ be a prehomogeneous vector space. A \emph{relative invariant} is an eigenfunction $f \in F(X)^\times$ under $G$; the formula $f(xg) = \omega(g)f$ uniquely determines the eigencharacter $\omega \in X^*(G)$ of $f$. We say $f$ is \emph{non-degenerate} if $f^{-1} \dd f$ gives a birational map $X \dashrightarrow \check{X}$. Prehomogeneous spaces admitting a non-degenerate relative invariant are called \emph{regular}.
\end{definition}

The set of relative invariants forms a multiplicative group. Their eigencharacters form a subgroup $X^*_\rho(G) \subset X^*(G)$. Relative invariants with trivial eigencharacter must be constant.

\begin{proposition}[{\cite[Corollary 2.17]{Ki03}}]\label{prop:det-formula}
	For a regular prehomogeneous vector space $(G, \rho, X)$, the character $(\det\rho)^2: G \to \Gm$ belongs to $X^*_\rho(G)$.
\end{proposition}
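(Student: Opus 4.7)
The strategy is to produce an explicit relative invariant with eigencharacter $(\det\rho)^{-2}$ out of the non-degenerate relative invariant granted by regularity; once this is done, the conclusion follows because $X^*_\rho(G)$ is a subgroup of $X^*(G)$.

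Concretely, fix a non-degenerate relative invariant $f \in F(X)^\times$ with eigencharacter $\omega$, and set $\phi := f^{-1}\dd f : X \dashrightarrow \check{X}$; by hypothesis $\phi$ is birational. The first step is to verify that $\phi$ is $G$-equivariant with respect to the contragredient action on $\check{X}$. Since $f(xg) = \omega(g) f(x)$, differentiating gives $(\dd f)_{xg}(v) = \omega(g)(\dd f)_x(vg^{-1})$, i.e.\ $(\dd f)_{xg} = \omega(g)\,(\dd f)_x \cdot g$ in $\check{X}$. The factors of $\omega(g)$ cancel with $f(xg)^{-1} = \omega(g)^{-1} f(x)^{-1}$, so $\phi(xg) = \phi(x)\cdot g$.

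Next, compare the canonical translation-invariant top forms $\dd x$ on $X$ and $\dd\check{x}$ on $\check{X}$, whose transformation laws are $\rho(g)^* \dd x = \det\rho(g)\,\dd x$ and $\check{\rho}(g)^* \dd\check{x} = \det\rho(g)^{-1}\,\dd\check{x}$. Write $\phi^* \dd\check{x} = J(\phi)\,\dd x$ with $J(\phi) \in F(X)$; since $\phi$ is birational (hence dominant) $J(\phi)$ is a nonzero rational function, and it is defined over $F$ because $\phi$, $\dd x$, $\dd\check{x}$ all are. Apply $\rho(g)^*$ to both sides of the equivariance equation $\phi\circ\rho(g) = \check{\rho}(g)\circ\phi$: on one hand one gets $(J(\phi)\circ\rho(g))\cdot\det\rho(g)\,\dd x$, and on the other $\det\rho(g)^{-1} J(\phi)\,\dd x$. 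Equating yields
\[ J(\phi)(xg) = \det\rho(g)^{-2}\, J(\phi)(x), \]
so $J(\phi)$ is a relative invariant with eigencharacter $(\det\rho)^{-2}$. Consequently $(\det\rho)^{-2} \in X^*_\rho(G)$, and hence $(\det\rho)^2 \in X^*_\rho(G)$.

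There is no serious obstacle; the only care required is in the transformation law of the top forms (a sign convention) and in the verification that $\phi$ intertwines the right $G$-action on $X$ with the contragredient right action on $\check{X}$. Once those bookkeeping items are set, the equality of the two pullbacks of $\phi^*\dd\check{x}$ under $\rho(g)^*$ forces the exponent $-2$ automatically.
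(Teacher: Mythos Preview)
Your argument is correct and is essentially the standard proof found in the cited reference: one uses the birational equivariant map $\phi = f^{-1}\dd f$ and computes the transformation law of its Jacobian to exhibit a relative invariant with eigencharacter $(\det\rho)^{-2}$. The paper itself does not supply a proof of this proposition; it simply quotes the result from \cite[Corollary 2.17]{Ki03}, so there is nothing further to compare.
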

This is actually the only application of regularity in this article, but we should emphasize that regularity is crucial for the study of functional equations. See \cite{Sa89} or \cite[\S 6]{Li15}.

\begin{proposition}[\cite{Sa89}]\label{prop:relative-invariants}
	There exist relative invariants $f_1, \ldots, f_r \in F[X]$, with eigencharacters $\omega_1, \ldots, \omega_r$, satisfying
	\begin{compactitem}
		\item every relative invariant takes the form $f = c \prod_{i=1}^r f_i^{d_i}$ with $c \in F^\times$ and $d_i \in \Z$;
		\item $\omega_1, \ldots, \omega_r$ form a basis for the $\Z$-module $X^*_\rho(G)$;
		\item the irreducible components of codimension one of $\partial X$ are precisely the divisors $f_i = 0$, $i=1, \ldots, r$.
	\end{compactitem}
	We call $f_1, \ldots, f_r$ the \emph{basic relative invariants} and label them by the codimension-one components of $\partial X$. Then they are unique up to $F^\times$.
\end{proposition}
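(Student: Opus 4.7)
The plan is to exploit two structural facts: $X$ is an affine space, so $F[X]$ is a UFD with units $F^\times$, and $X^+$ is a single open dense $G$-orbit, so every $G$-invariant rational function on $X$ is a constant. Together these reduce everything to the divisor calculus of relative invariants.

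First I would construct the $f_i$. Let $D_1, \ldots, D_r$ be the codimension-one irreducible components of $\partial X$ (as closed subvarieties of $X$ over $F$). Because $G$ is connected, each $D_i$ is $G$-stable; because $F[X]$ is a UFD, each $D_i$ is cut out by a single irreducible polynomial $f_i \in F[X]$, unique up to $F^\times$. The $G$-stability of the prime ideal $(f_i)$ then means $g \cdot f_i = \omega_i(g) f_i$ for some $\omega_i(g) \in F^\times$; it is routine to check that $\omega_i$ is a morphism of algebraic groups $G \to \Gm$. Thus each $f_i$ is a relative invariant with eigencharacter $\omega_i$.

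Next, take an arbitrary relative invariant $f \in F(X)^\times$ with eigencharacter $\omega$. The zero locus and pole locus of $f$ are $G$-stable, so their intersections with $X^+$ are $G$-stable subsets of the single orbit $X^+$; since $f \neq 0$, both must be empty. Hence the principal divisor $(f)$ on the smooth $F$-variety $X$ is supported on codimension-one components of $\partial X$, yielding $(f) = \sum_{i=1}^r d_i D_i$ for uniquely determined $d_i \in \Z$. Then $f \prod_i f_i^{-d_i}$ has trivial divisor, so it lies in $F[X]^\times = F^\times$. This establishes the first bullet and, upon reading off eigencharacters, the equality $\omega = \sum d_i \omega_i$, so $\omega_1, \ldots, \omega_r$ generate $X^*_\rho(G)$. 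For $\Z$-linear independence, note that $\sum d_i \omega_i = 0$ makes $\prod f_i^{d_i}$ a $G$-invariant rational function, which is forced to be in $F^\times$; unique factorization in $F[X]$ then forces all $d_i = 0$. The third bullet holds by construction, and the uniqueness up to $F^\times$ amounts to the uniqueness of irreducible generators of the prime ideals $(f_i)$.

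The only delicate point is the field-of-definition issue: one must agree that ``irreducible components of codimension one'' are taken over $F$, so that the $f_i$ are automatically $F$-irreducible polynomials, while the $G$-stability argument uses only the connectedness of $G$ over $F$. If one wished to work geometrically instead, one would have to track the $\mathrm{Gal}(\bar F/F)$-action and combine Galois-conjugate geometric components into a single $F$-irreducible $D_i$; this does not affect the statement, but it is the one place where carelessness could produce phantom invariants or miscount $r$.
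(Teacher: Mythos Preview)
The paper does not actually supply a proof of this proposition: it is stated with a citation to \cite{Sa89} and used as a known structural result. So there is no ``paper's own proof'' to compare against.

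Your argument is the standard one and is correct. The two pillars you identify---$F[X]$ being a UFD with units $F^\times$, and the fact (recorded immediately before the proposition) that relative invariants with trivial eigencharacter are constant---are exactly what drives the result. The step ``$G$ connected $\Rightarrow$ each $D_i$ is $G$-stable'' is right (the induced permutation of the finitely many components is a morphism from a connected variety to a finite set), and the passage from $G$-stability of the prime ideal $(f_i)$ to an algebraic character $\omega_i$ is fine once one notes that $\omega_i(g) = f_i(x_0 g)/f_i(x_0)$ for any $x_0 \in X^+$, visibly a regular function on $G$. Your closing remark about the $F$-versus-$\bar F$ bookkeeping is also on point and matches the paper's own comment, just after the proposition, about Galois-conjugate boundary divisors merging over $\R$.
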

When we work over $\R$, some boundary divisors over $\CC$ will be paired by complex conjugation. The corresponding basic relative invariants (resp. eigencharacters) over $\CC$ merge into $f_i \overline{f_i}$ (resp. $\omega_i + \overline{\omega_i}$) over $\R$.

\begin{proposition}[{\cite[Theorem 2.28]{Ki03}}]
	Suppose $X$ is prehomogeneous over $\R$ or $\CC$. If $X^+$ is affine, then $\partial X$ is of codimension one. Furthermore, $X$ is a regular prehomogeneous vector space.
\end{proposition}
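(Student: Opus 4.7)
The plan proceeds in three stages.

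For the codimension-one assertion, I would exploit that $X$ itself is smooth and affine (being a vector space) while $X^+$ is open and affine by hypothesis. Let $D \subset \partial X$ denote the union of codimension-one components and $Z \subset \partial X$ the union of the remaining components, of codimension $\geq 2$. The open subscheme $X \smallsetminus D \subset X$ is affine as $\Spec$ of a localization of $F[X]$, and contains $X^+ = (X \smallsetminus D) \smallsetminus Z$. Hartogs' extension theorem, applicable since $X \smallsetminus D$ is smooth and $Z$ has codimension $\geq 2$, yields $F[X \smallsetminus D] \rightiso F[X^+]$; but $X^+$ is affine by hypothesis, so $X^+ = \Spec F[X^+] = X \smallsetminus D$, forcing $Z = \emptyset$. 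Hence $\partial X$ is pure of codimension one.

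Next I would extract the basic relative invariants. Since $X^+$ is a single $G$-orbit, $\partial X$ is $G$-stable, and connectedness of $G$ keeps each irreducible component individually stable. Each such component is an irreducible hypersurface cut out by an irreducible $f_i \in \bar{F}[X]$, unique up to scalar, and $G$-stability forces $g\cdot f_i = \omega_i(g) f_i$ for a character $\omega_i \in X^*(G)$; thus $f_i$ is a relative invariant. Over $F=\R$ one descends by pairing complex-conjugate boundary divisors, as already flagged after Proposition~\ref{prop:relative-invariants}.

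For regularity I would construct a non-degenerate relative invariant in the form $f = f_1^{a_1}\cdots f_r^{a_r}$. Consider the $G$-equivariant logarithmic gradient
\[
  \phi_a : X^+ \longrightarrow \check{X}, \qquad \phi_a(x) = \sum_{i=1}^r a_i\,\frac{df_i}{f_i}\bigg|_x.
\]
For $x \in X^+$ the orbit map gives a surjection $\rho_x: \mathfrak{g} \twoheadrightarrow T_x X^+ = X$, with transpose the inclusion $\rho_x^*: \check{X} \hookrightarrow \mathfrak{g}^*$; differentiating $f(xg) = \omega(g)f(x)$ yields $\rho_x^*(\phi_a(x)) = \sum_i a_i\,d\omega_i$, so $\phi_a(x)$ is determined as the unique preimage of that element and the image of $\phi_a$ lies in a single $G$-orbit in $\check{X}$. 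The task thus reduces to exhibiting $a$ for which $\phi_a$ is dominant, equivalently the Hessian $\det(\partial_i\partial_j \log f)(x)$ does not vanish identically on $X^+$; once dominance holds, $G$-equivariance together with $\dim X = \dim \check{X}$ force $\phi_a$ to be birational, which is precisely non-degeneracy of $f$.

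The main obstacle is this last point --- the generic non-vanishing of the Hessian of $\log f$. This is the content of the classical Sato--Kimura argument reproduced in \cite{Ki03}: if $\phi_a$ were degenerate for every $a$, one would extract a non-trivial linear relation among the $d\omega_i$ modulo $\rho_x^*(\check{X})$, contradicting the fact that the $\omega_i$ form a $\Z$-basis of $X^*_\rho(G)$ as asserted by Proposition~\ref{prop:relative-invariants}. Over $F=\R$ one first runs the argument over $\CC$ and then restricts $a$ to the fixed locus of the conjugation pairing on basic invariants identified in stage two, ensuring that $f$ is defined over $\R$.
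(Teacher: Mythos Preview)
Your first stage, deducing pure codimension one for $\partial X$ via algebraic Hartogs, is a correct reconstruction of part of \cite[Theorem 2.28]{Ki03}; the paper itself simply cites Kimura throughout, so here you are unpacking rather than diverging.

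Stage three, however, has two genuine gaps. First, the claim that dominance together with $G$-equivariance and $\dim X = \dim\check X$ forces $\phi_a$ to be birational is unjustified: these hypotheses yield only a finite \'etale cover $X^+ \to \phi_a(X^+)$ of degree $[H':H]$, where $H' = \Stab_G(\phi_a(x))$, and showing $H = H'$ is itself a separate nontrivial step in Kimura's theory (roughly his Theorem 2.16). Second, your sketch of the Hessian non-vanishing cannot work as written. You assert that degeneracy of $\phi_a$ for every $a$ would produce a nontrivial relation among the $d\omega_i$ modulo $\rho_x^*(\check X)$ --- but you have already computed $d\omega_i = \rho_x^*(d\log f_i(x))$, so each $d\omega_i$ already lies in $\rho_x^*(\check X)$ and no contradiction is available. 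The structural red flag is that your stage three never uses the affineness of $X^+$, nor the codimension-one output of stage one. Since there exist prehomogeneous vector spaces with $X^*_\rho(G) \neq 0$ that are \emph{not} regular --- for instance $\Gm \times \SL_2$ acting on $\mathbb{A}^1 \oplus \mathbb{A}^2$ by scaling the first factor, where $\partial X$ has a codimension-two component and the only basic invariant has degenerate logarithmic Hessian --- any correct argument must feed that hypothesis back in. Kimura's proof of \cite[Theorem 2.24]{Ki03} does exactly this: it uses that $\partial X$ \emph{equals} $\{f=0\}$, not merely contains it.
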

\begin{proof}
	This is \cite[Theorem 2.28]{Ki03} when $F = \CC$. In fact, it is shown in \cite[Theorem 2.24]{Ki03} that if $\partial X$ is defined by $f=0$, then $f$ is a non-degenerate relative invariant. As we can ensure $f \in \R[X]$, this result also holds over $\R$.
\end{proof}

In what follows, ``vector field'' means an algebraic vector field on a smooth $F$-variety. The Lie algebra $\mathfrak{g}$ also acts on $X$, still denoted as $\rho$. Each $v \in \mathfrak{g}$ induces a vector field $D_v$ on $X$: as a differential operator, it maps any $\varphi \in F[X]$ to the regular function
\[ x \longmapsto \lrangle{ \underbracket{\dd \varphi(x)}_{\in X^\vee}, \;\underbracket{x \cdot \rho(v)}_{\in X} }. \]
Since $(G, \rho, X)$ is prehomogeneous, the vectors $\{D_v : v \in \mathfrak{g} \}$ generate the tangent sheaf $\mathscr{T}_X$ at each $x \in X^+$.

\begin{lemma}\label{prop:diff-op-relation}
	Assume that $\partial X$ is the zero locus of some $f \in F[X]$. Let $v_1, \ldots, v_n \in \mathfrak{g}$ be a basis. Then every vector field $D \in \Gamma(X, \mathscr{T}_X)$ on $X$ satisfies
	\[ f^k D = \sum_{i=1}^n a_i D_{v_i}, \quad a_i \in F[X] \]
	for some $k \geq 0$.
\end{lemma}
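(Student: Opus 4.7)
The plan is to work first on the open dense orbit $X^+$, express $D|_{X^+}$ as an $\mathcal{O}_{X^+}$-linear combination of the $D_{v_i}$, and then clear denominators to get an identity on $X$.

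First I would recall that since $(G,\rho,X)$ is prehomogeneous, the orbit map $G \to X^+$, $g \mapsto xg$ is a submersion at each $x \in X^+$, so the linear map $\mathfrak{g} \to T_x X$, $v \mapsto D_v(x)$ is surjective. This shows that the sheaf map $\mathcal{O}_{X^+}^{\oplus n} \to \mathscr{T}_{X^+}$ sending $(b_i) \mapsto \sum_i b_i D_{v_i}$ is surjective at every geometric fiber, hence (both sides being coherent on a smooth variety) surjective as a sheaf map by Nakayama's lemma.

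Next, since $\partial X = V(f)$ and $X$ is affine (it is a vector space), $X^+ = D(f)$ is a principal open affine with $F[X^+] = F[X][f^{-1}]$. Taking global sections over the affine $X^+$, we conclude that $\Gamma(X^+, \mathscr{T}_{X^+})$ is generated by $D_{v_1}, \ldots, D_{v_n}$ as an $F[X][f^{-1}]$-module. Restricting $D \in \Gamma(X, \mathscr{T}_X)$ to $X^+$, we obtain an expression
\[ D|_{X^+} = \sum_{i=1}^n b_i D_{v_i}\bigr|_{X^+}, \quad b_i \in F[X][f^{-1}]. \]
Picking a common denominator, we set $a_i := f^k b_i \in F[X]$ for $k$ large enough, so that $(f^k D - \sum_i a_i D_{v_i})|_{X^+} = 0$.

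Finally, since $X$ is smooth, $\mathscr{T}_X$ is locally free (in fact globally trivial as $X$ is a vector space), so the restriction map $\Gamma(X, \mathscr{T}_X) \to \Gamma(X^+, \mathscr{T}_{X^+})$ is injective as $X^+$ is dense in $X$. Therefore the identity $f^k D = \sum_i a_i D_{v_i}$ lifts from $X^+$ to $X$, completing the proof. The only nontrivial step is the fiberwise surjectivity in the first paragraph, which is essentially a restatement of the prehomogeneity of $X^+$; everything else is localization and clearing denominators.
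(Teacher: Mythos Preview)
Your proof is correct and follows essentially the same route as the paper's: establish surjectivity of $\mathcal{O}_{X^+}^{\oplus n} \to \mathscr{T}_{X^+}$, pass to global sections using that $X^+$ is affine, then clear denominators and extend by density. The only cosmetic differences are that the paper phrases the global-sections step via $H^1(X^+,\mathscr{I})=0$ rather than exactness of $\Gamma$ on affines, and it takes the sheaf surjectivity as already observed just before the lemma rather than re-deriving it via Nakayama.
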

\begin{proof}
	Consider the short exact sequence of coherent sheaves on $X^+$.
	\[\begin{tikzcd}[row sep=tiny]
		0 \arrow{r} & \mathscr{I} \arrow{r} & \mathscr{O}_{X^+}^{\oplus n} \arrow{r} & \mathscr{T}_{X^+} \arrow{r} & 0 \\
		& & (b_1, \ldots, b_n) \arrow[mapsto]{r} & \sum_{i=1}^n b_i D_{v_i}. &
	\end{tikzcd}\]
	Since $X^+$ is affine, $H^1(X^+, \mathscr{I})=0$ so we obtain a surjection $\Gamma(X^+, \mathscr{O}_X)^{\oplus n} \twoheadrightarrow \Gamma(X^+, \mathscr{T}_X)$. Thus on $X^+$ we have $D = \sum_{i=1}^n b_i D_{v_i}$ for certain $b_i \in F[X^+]$. As $F[X^+] = F[X][f^{-1}]$, clearing denominators gives $f^k D = \sum_{i=1}^n a_i D_{v_i}$ for some $a_i \in F[X]$ and $k \geq 0$. This equality extends to $X$ by density.
\end{proof}

\begin{axiom}\label{axiom:PVS}
	In this article, we shall assume $F=\R$ and work with a prehomogeneous vector space $(G, \rho, X)$ such that
	\begin{center}
		the open $G$-orbit $X^+$ is an essentially symmetric space, see Definition \ref{def:symm-space}.
	\end{center}
	In this case, $(G, \rho, X)$ is regular and $\partial X$ is the union of the zero loci of basic relative invariants; Lemma \ref{prop:diff-op-relation} is also applicable in this setting.
\end{axiom}

\begin{remark}
	If we assume furthermore that $G$ is split, then the datum $X \supset X^+$ meets the requirements in \cite[Axiom 2.4.3]{Li15} (see also Theorem 6.2.6 in \textit{op.\ cit.}) and it makes sense to define zeta integrals. This will be the topic of \S\ref{sec:zeta-integral}.
\end{remark}

It is the last condition which affords the main geometric input. More generally, we are interested in prehomogeneous vector spaces whose open orbit is spherical, i.e.\ with an open Borel orbit over the algebraic closure. By the general theory of spherical varieties \cite[Theorem 1.2]{Le98}, a linear representation $\rho: G \to \GL(X)$ makes $X$ into a spherical $G$-variety if and only if
\[ \CC[X] \; \text{is multiplicity-free as a $G$-representation}. \]
This problem falls naturally in the realm of invariant theory. A complete classification of such representations $(G, \rho, X)$ over $\CC$ has been obtained independently in \cite{Le98, BR96}, both generalize the earlier work by V. G. Kac \cite{Kac80}. Let us extract some interesting cases from their lists. In the following instances $X^*_\rho(G)$ always has rank one, and the condition $F=\R$ will be immaterial.

\begin{example}\label{eg:Godement-Jacquet}
	Let $F$ be any field. Take $G = \GL(n) \times \GL(n)$, acting on the space of matrices $X := \text{Mat}_{n \times n}$ by
	\[\begin{tikzcd}
		A \arrow[mapsto]{r}[above]{(g_1, g_2)} & g_2^{-1} A g_1.
	\end{tikzcd}\]
	The open orbit is $X^+ = \GL(n)$, which is a standard example of symmetric space (the ``group case''). In this case, $\det$ generates the group of relative invariants; consequently $X^*_\rho(G)$ is generated $(g_1, g_2) \mapsto \det(g_2)^{-1} \deg(g_1)$.
	
	More generally, we may replace $\GL(n) \times \GL(n)$ by $D^\times \times D^\times$ where $D$ is a central simple algebra of dimension $n^2$, and $X = D$. The basic relative invariant can be taken to be $\Nrd$, the reduced norm of $D$. This is the geometric backdrop of Godement--Jacquet theory.
	
	The $X$ in this example carries the structure of a \emph{reductive monoid}, and this is essentially the only example of smooth reductive monoids. See \cite[Theorem 27.25]{Ti11}.
\end{example}

\begin{example}
	Assume $\text{char}(F) \neq 2$. Let $G = \GL(n)$ act on the space $X$ of symmetric bilinear forms on an $n$-dimensional space, namely by
	\[\begin{tikzcd}
		B(x,y) \arrow[mapsto]{r}[above]{g} & \left[ (x,y) \mapsto B(gx, gy) \right].
	\end{tikzcd}\]
	The open orbit $X^+$ consists of non-degenerate $B$; it is a symmetric space whose stabilizer subgroups are orthogonal groups. Furthermore, $\partial X$ is the zero locus of determinant if we express these forms as symmetric $n \times n$ matrices. This prehomogeneous vector space has been studied in depth by Shintani \cite{Shi75}.
\end{example}

\begin{example}
	Similar to the previous example, but consider the $\GL(2n)$-action on alternating forms instead. The stabilizers are then isomorphic to $\Sp(2n)$.
\end{example}

\begin{example}
	Assume $\text{char}(F) \neq 2, 3$. Let $X$ be an \emph{Albert algebra} over $F$. This is a special flavor of Jordan algebras of dimension $27$,  non-associative with identity $1_X$, and we refer to \cite[\S 5]{SV00} for a detailed account. To such an algebra $X$ is attached a cubic form $N: X \to F$, called the norm of $X$ (called the determinant map in \cite[p.120]{SV00}), satisfying $N(1_X)=1$. Let
	\[ G^\flat := \left\{g \in \GL(X) : \forall x, \; N(xg) = N(x) \right\}, \quad G := \Gm \times G^\flat. \]
	Then $G$ acts on $X$ by the representation $\rho$:
	\[\begin{tikzcd}
		x \arrow[mapsto]{r}[above]{{(t,g) \in G}} & tx \cdot g.
	\end{tikzcd}\]
	We claim that
	\begin{enumerate}[(i)]
		\item $G^\flat$ is a simply connected reductive group of type $\mathsf{E_6}$;
		\item $N$ is an irreducible cubic homogeneous polynomial;
		\item as a group scheme, the stabilizer $\Stab_G(1_X)$ equals
			\[ H := \left\{ (t,g) \in \bm{\mu}_3 \times G^\flat: 1_X \cdot g = t^{-1} 1_X \right\}, \]
			and its identity component $H^\circ \subset G^\flat$ equals the automorphism group of the Jordan algebra, $H^\circ$ is a group of type $\mathsf{F_4}$;
		\item $X^+ := \{x \in X: N(x) \neq 0 \}$ is the open $G$-orbit, and it admits a finite covering by the symmetric $G$-space $\Gm \times (H^\circ \backslash G^\flat)$ with kernel $\bm{\mu}_3$;
		\item $N$ is a basic relative invariant with eigencharacter $\omega$ equal to $(t,g) \mapsto t^3$, and $\omega$ generates $X^*_\rho(G)$.
	\end{enumerate}
	Axiom \ref{axiom:PVS} is therefore verified. These assertions are more or less well-known; many of them have actually been recorded in \cite[Example 2.27]{Ki03}. Nonetheless, it seems difficult to find a brief account in sufficient generality, whence the explanations below.

	First, (i) is \cite[Theorem 7.3.2]{SV00}. The irreducibility (ii) of $N$ follows from \cite[Corollary 5.4.6]{SV00}.

	If $t \cdot 1_X g = 1_X$, taking norms yields $t^3 = 1$, and the description of $H$ follows. The identity component $H^\circ$ embeds into $\Stab_{G^\flat}(1_X)$. By \cite[5.9.4]{SV00}, $g \in \GL(X)$ is an automorphism of the Jordan algebra if and only if $g$ preserves both $N$ and $1_X$; it follows that \cite[Theorem 7.2.1]{SV00} that $\Stab_{G^\flat}(1_X)$ is a group of type $\mathsf{F_4}$, in particular it is connected. Therefore $H^\circ = \Stab_{G^\flat}(1_X)$, this completes (iii).

	The proof of \cite[Theorem 7.3.2]{SV00} shows that $G^\flat$ acts transitively on $\{N=1\}$ after base-change to the algebraic closure. Hence $G = \Gm \times G^\flat$ acts transitively on $X^+ := \{N \neq 0\}$, which proves half of (iv); we also infer that $H/H^\circ \simeq \bm{\mu}_3$. In \cite[Proposition 7.3.1]{SV00}, $H^\circ$ is described as the subgroup fixed by an involution $g \mapsto \tilde{g}$ of $G^\flat$. Now we see $H^\circ \backslash G = \Gm \times (H^\circ \backslash G^\flat)$ is symmetric, the covering $H^\circ \backslash G \to H \backslash G$ has kernel $\bm{\mu}_3$. This proves (iv).
	
	Finally, the irreducibility of $N$ forces it to be a basic relative invariant. The corresponding eigencharacter is $(t,g) \mapsto t^3$ since $N$ is cubic.
	
	This prehomogeneous vector space has been studied in \cite{SF84,Mu89}.
\end{example}


\section{Holonomic \texorpdfstring{$D$}{D}-modules: \texorpdfstring{$b$}{b}-functions}\label{sec:holonomicity}
To begin with, we let $\Bbbk$ be a field of characteristic zero. Let $Z$ be a smooth $\Bbbk$-variety. The theory of algebraic $D$-modules furnishes
\begin{itemize}
	\item a sheaf of $\mathscr{O}_Z$-algebras $\mathscr{D}_Z$, consisting of germs of algebraic differential operators on $Z$;
	\item $D_Z := \Gamma(Z, \mathscr{D}_Z)$, the algebra of algebraic differential operators on $Z$.
\end{itemize}
Hereafter we assume $\Bbbk$ algebraically closed. We say a $\mathscr{D}_Z$-module is \emph{coherent} if it is locally of finite presentation over $\mathscr{D}_Z$; the sheaf $\mathscr{D}_Z$ itself is known to be coherent on both sides. Whenever $Z$ is affine, the pair of functors $\mathscr{F} \mapsto \Gamma(Z, \mathscr{F})$ and $\mathscr{D}_Z \dotimes{D_Z} M \mapsfrom M$ identifies the abelian categories $\mathscr{D}_Z\dcate{Mod}$ and $D_Z\dcate{Mod}$. Coherent $\mathscr{D}_Z$-modules correspond to $D_Z$-modules of finite type using this dictionary. In fact, this equivalence holds for the so-called $\mathscr{D}$-affine varieties, including the flag varieties $P \backslash G$ as well.

Suppose $\mathscr{F}$ is a coherent $\mathscr{D}_Z$-module. Its \emph{characteristic variety} $\text{Ch}(\mathscr{F}) \subset T^* Z$ is defined by first taking a good filtration $F_\bullet \mathscr{F}$, then taking the support of $\gr_F \mathscr{F}$; it is a closed conic subvariety of $T^* Z$. Bernstein's inequality asserts that $\dim \text{Ch}(\mathscr{F}) \geq \dim Z$. If we also have $\dim \text{Ch}(\mathscr{F}) \leq \dim Z$, the coherent $\mathscr{D}_Z$-module $\mathscr{F}$ is said to be \emph{holonomic}. Holonomic modules form a thick subcategory of $\mathscr{D}_Z\dcate{Mod}$.

More generally, we say a $\mathscr{D}_Z$-module $\mathscr{F}$ is \emph{subholonomic} if $\mathscr{F}$ is coherent over $\mathscr{D}_Z$ and $\dim\text{Ch}(\mathscr{F}) \leq \dim Z + 1$.

Let $Z$ be a smooth $\R$-variety, we shall write $Z_\CC := Z \times_\R \CC$. Any generalized function (or distribution if we fix a measure) $u$ on $Z(\R)$ generates a $\mathscr{D}_{Z_\CC}$-module: on each open affine $U$, the differential operators in $\Gamma(U, \mathscr{D}_{Z_\CC})$ act on $u$.

In what follows, we review several standard results on $\mathscr{D}$-modules. The main references are \cite[Appendice A]{BD92} and \cite{D-mod}. Let $X$ be a smooth $\R$-variety and $f \in \CC[X_{\CC}]$ be non-constant. Introduce the symbols $s$, $f^s$. Define the sheaf of algebras $\mathscr{D}_{X_\CC}[s] := \mathscr{D}_{X_\CC} \otimes \CC[s]$ over $X_{\CC}$, which acts on the left module
\[ \mathscr{D}_{X_\CC}[s] f^{s + \Z} = \bigcup_{m \geq 0} \mathscr{O}_{X_\CC}[s] \otimes f^{s-m} \]
in the natural manner, namely: a tangent vector $v$ maps
\[ g s^k \otimes f^{s-m} \longmapsto (vg)s^k \otimes f^{s-m} + g (s-m) s^k (vf) \otimes f^{s-m-1}, \]
where $g$ stands for a local section of $\mathscr{O}_{X_\CC}$. Following Kashiwara, define $\mathscr{N}_f$ as the $\mathscr{D}_{X_\CC}[s]$-submodule generated by $f^s$.

Consider a holonomic $\mathscr{D}_{X_\CC}$-module $\mathscr{M}$. Then $\mathscr{M} \dotimes{\mathscr{O}_{X_\CC}} \mathscr{N}_f$ is a $\mathscr{D}_{X_\CC}[s]$-module under the ``diagonal'' action. For every $u \in \Gamma(X_\CC, \mathscr{M})$, we investigate the $\mathscr{D}_{X_\CC}[s]$-submodule it generates:
\[ \mathscr{A} := \mathscr{D}_{X_\CC}[s] \cdot (u \otimes f^s). \]

As explicated in \textit{loc.\  cit.}, Kashiwara proved the following
\begin{theorem}
	Define $\mathscr{A}$ as above. Then
	\begin{enumerate}
		\item $\mathscr{A}$ is subholonomic as a $\mathscr{D}_{X_\CC}$-module;
		\item for all $\alpha \in \CC$, the specialization $\mathscr{A}_\alpha := \mathscr{A} \dotimes{\CC[s]} \CC[s]/(s-\alpha)$ is a holonomic $\mathscr{D}_{X_\CC}$-module.
	\end{enumerate}
\end{theorem}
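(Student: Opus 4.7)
My plan is to treat the two assertions together, since they are linked by a common filtration argument, and then to separate them at the end.

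First, note that $\mathscr{A}$ is tautologically cyclic over $\mathscr{D}_{X_\CC}[s]$, so coherence is automatic. To bound its characteristic variety as a $\mathscr{D}_{X_\CC}$-module, I would equip $\mathscr{A}$ with a good bi-filtration $F_{i,j}\mathscr{A}$ in which $i$ tracks the order of differential operators applied and $j$ tracks the $\CC[s]$-degree. The associated graded is then a coherent module over $\mathscr{O}_{T^\ast X_\CC}[s]$, and the task reduces to bounding the dimension of its support inside $T^\ast X_\CC \times \mathbb{A}^1_s$.

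Over the open locus $U := X_\CC \smallsetminus \{f = 0\}$, the sheaf $\mathscr{N}_f|_U$ is free of rank one over $\mathscr{O}_U[s]$, generated by $f^s$, and hence $\mathscr{A}|_U$ embeds into $(\mathscr{M}|_U)[s] \otimes f^s$. Because $\mathscr{M}|_U$ is holonomic and the extra polynomial generator $s$ contributes at most one additional dimension, the estimate $\dim \text{Ch}(\mathscr{A}|_U) \leq \dim X + 1$ is immediate on $U$. The substantive work is to propagate this estimate across the divisor $\{f = 0\}$. For this, I would appeal to Kashiwara's theorem that localization preserves holonomicity, so that $\mathscr{M}[f^{-1}]$ is again holonomic; then a standard Bernstein-style argument, exploiting Noetherianity of the bi-filtered ring $\mathscr{D}_{X_\CC}[s]$ together with the fact that $u \otimes f^s$ generates $\mathscr{A}$ as a cyclic module, shows that the characteristic variety cannot pick up extra dimension along $\{f=0\}$. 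This yields part (1).

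For part (2), I would use the standard cutting lemma: once $\mathscr{A}$ is subholonomic, the cyclic quotient $\mathscr{A}_\alpha = \mathscr{A}/(s-\alpha)\mathscr{A}$ is coherent over $\mathscr{D}_{X_\CC}$, and the support of its associated graded is obtained from $\text{Ch}(\mathscr{A})$ by intersection with $\{s = \alpha\}$, losing one dimension. Combined with Bernstein's inequality $\dim \text{Ch}(\mathscr{A}_\alpha) \geq \dim X$, one obtains equality, so $\mathscr{A}_\alpha$ is holonomic. A subtlety arises when $\alpha$ is a root of the Bernstein--Sato $b$-function of $u \otimes f^s$ (so that the specialization is not generic), but in that case the $b$-function relation $P(s)\cdot(u \otimes f^{s+1}) = b(s)\,u \otimes f^s$ furnishes a surjection $\mathscr{A}_{\alpha+1} \twoheadrightarrow \mathscr{A}_\alpha$ up to lower-order corrections, and holonomicity is inherited.

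The main obstacle is unquestionably the dimension estimate near $\{f=0\}$ in part (1): this is really where Kashiwara's ideas become intertwined with the very existence of the $b$-function, and the clean dimension count over $U$ tells one nothing about the worst possible behavior along the zero locus of $f$. Once this bound is in place, the passage from (1) to (2) is essentially formal.
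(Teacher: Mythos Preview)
The paper does not prove this theorem; it is quoted as a result of Kashiwara, with references to \cite{BD92} (Appendice A) and, a few lines later, to \cite[p.109]{Kas03}. So there is no in-paper proof to compare against, only the literature.

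Your sketch for (1) correctly locates the difficulty at $\{f=0\}$ but then hands it off to ``a standard Bernstein-style argument exploiting Noetherianity,'' which is a placeholder, not an argument. Kashiwara's actual method is structurally different from what you outline: one uses the graph embedding $i_f: X_\CC \to X_\CC \times \mathbb{A}^1_t$, $x \mapsto (x, f(x))$, and realizes $\mathscr{M} \dotimes{\mathscr{O}} \mathscr{N}_f$ together with its $s$-action inside the direct image $(i_f)_+ \mathscr{M}$, where $s$ acts as $-\partial_t t$. Since direct image along a closed immersion preserves holonomicity, $(i_f)_+ \mathscr{M}$ is holonomic on $X_\CC \times \mathbb{A}^1$, hence has characteristic dimension $\dim X + 1$; restricting the $\mathscr{D}$-action to $\mathscr{D}_{X_\CC}$ then gives the subholonomicity bound for $\mathscr{A}$ in one stroke, with no separate analysis along the divisor. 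Neither Noetherianity of $\mathscr{D}_{X_\CC}[s]$ nor holonomicity of $\mathscr{M}[f^{-1}]$ by itself controls $\text{Ch}(\mathscr{A})$ over $\{f=0\}$, so your proposed route does not close the gap you yourself flag.

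For (2) your instinct is right but the execution is muddled. The clean argument is: $s-\alpha$ is injective on $\mathscr{A}$ (which sits inside a $\CC[s]$-torsion-free module), so there is a short exact sequence $0 \to \mathscr{A} \xrightarrow{\,s-\alpha\,} \mathscr{A} \to \mathscr{A}_\alpha \to 0$ of $\mathscr{D}_{X_\CC}$-modules, and additivity of characteristic cycles forces every $(\dim X+1)$-dimensional component to occur with multiplicity zero in $\text{Ch}(\mathscr{A}_\alpha)$. This is uniform in $\alpha$: the ``subtlety'' you raise about roots of the $b$-function is a red herring, and the map you invoke is also misstated (since $ts=(s+1)t$, the shift $t$ induces $\mathscr{A}_\alpha \to \mathscr{A}_{\alpha-1}$, not $\mathscr{A}_{\alpha+1} \to \mathscr{A}_\alpha$) and in any case unnecessary. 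Your phrase ``intersection with $\{s=\alpha\}$'' is likewise misleading, because $\text{Ch}(\mathscr{A})$ as a $\mathscr{D}_{X_\CC}$-module lives in $T^* X_\CC$, not in $T^* X_\CC \times \mathbb{A}^1_s$; even in your bi-filtered picture, a hyperplane section need not drop dimension when it contains a component.
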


In our context, the $\mathscr{D}_{X_\CC}$-module $\mathscr{M}$ arises in the following manner.
\begin{compactitem}
	\item $X$ is smooth affine and $j: X^+ \hookrightarrow X$ is an open immersion;
	\item the manifold $X(\R)$ is equipped with a measure (more precisely, with a smooth density, see \S\ref{sec:zeta-integral});
	\item there is a non-constant $f \in \R[X]$ such that $f > 0$ on $X^+(\R)$ and $\partial X := X \smallsetminus X^+$ is defined by $f = 0$, in particular $X^+$ is affine as well;
	\item $\mathscr{M} := j_* \mathscr{M}'$ where $\mathscr{M}'$ is a holonomic $\mathscr{D}_{X^+_\CC}$-module.
\end{compactitem}
Since $X^+$ is affine, a standard result about $\mathscr{D}$-modules (see \cite[p.244 and p.292]{D-mod}) asserts that $j_* \mathscr{M}'$ is indeed holonomic as a $\mathscr{D}_{X_\CC}$-module. If $u \in \Gamma(X^+_\CC, \mathscr{M}')$ generates a submodule $\simeq D_{X^+_\CC}/I$, then $u$ generates a submodule $\simeq D_{X_\CC}/J$ of $\Gamma(X_\CC, j_* \mathscr{M}')$ where $J := I \cap D_{X_\CC}$; we conclude that the latter submodule is still holonomic. 

Next, let $X^+ \hookrightarrow X \hookleftarrow \{f=0\}$ be as above. Let $u \in C^\infty(X^+(\R))$. To begin with, we assume that $u$ is supported on a connected component $\Omega$ of $X^+(\R)$ for the usual topology.
\begin{hypothesis}\label{hyp:cond-u}
	Suppose that the tangent sheaf $\mathscr{T}_{X_\CC}$ is generated by global sections $v_1, \ldots, v_l$. For every $(i_1, \ldots, i_l) \in \Z_{\geq 0}^l$, there exists $k \in \Z$ such that every $o \in X(\R)$ has a neighborhood $V$ together with a constant $C_V > 0$, such that
	\[ x \in V \cap X^+(\R) \implies \left| (v_1^{i_1} \cdots v_l^{i_l} \cdot u)(x) \right| \leq C_V f(x)^{-k}. \]
	The point is to verify the inequality in the case $o \in (\partial X)(\R)$.
\end{hypothesis}

For $u$ as above, we infer that for every $N \geq 0$, there exists $k > 0$ such that whenever $\Re(\alpha) \geq k$, the extension by zero of the function $u f^\alpha$ to $X(\R)$ is of class $C^N$. Since the measure is chosen, $uf^\alpha$ can be viewed as a generalized function (or distribution) on $X(\R)$ of order zero. If we strengthen the Hypothesis \ref{hyp:cond-u} to
\begin{equation}\label{eqn:cond-u-strong}
	x \in X^+(\R) \implies \left| (v_1^{i_1} \cdots v_l^{i_l} \cdot u)(x) \right| \leq p(x) f(x)^{-k}
\end{equation}
where $p \geq 0$ is a Nash function on $X(\R)$ (thus locally bounded), then $uf^\alpha$ is even tempered for $\Re(\alpha) \gg 0$, that is, a continuous linear functional on the Schwartz space of $X(\R)$ defined in \cite{AG08}.

We have to impose Hypothesis \ref{hyp:cond-u} on every component $\Omega$. Consider the following procedure.
\[\begin{tikzcd}[every arrow/.style={draw, line width=1.3pt, -Latex}, row sep=small]
	u \in C^\infty(X^+(\R)) \arrow{r}[above, inner sep=0.6em]{\text{generates}} & \mathscr{M}' \arrow{r}[above]{j_*} & \mathscr{M} \ni u \arrow{r} & \mathscr{A} \ni u \otimes f^s \\
	\text{\parbox{70pt}{assume \\ Hypothesis \ref{hyp:cond-u}}} & \text{\parbox{50pt}{assume \\ holonomic}} & \mathscr{D}_{X_\CC}\dcate{Mod} & \mathscr{D}_{X_\CC}[s]\dcate{Mod}
\end{tikzcd}\]

To each $\alpha \in \CC$, specialization yields the section $\quoted{u f^\alpha}$ of $\mathscr{A}_\alpha$. On the other hand, for $\Re(\alpha)$ sufficiently large, $uf^\alpha$ can be seen as a generalized function on $X(\R)$ of order zero.

\begin{proposition}[{\cite[Proposition A.1]{BD92}}]\label{prop:prolongement-ED}
	In the situation above, there exists $k > 0$ such that if $\alpha \in \CC$ satisfies $\Re(\alpha) > k$, then the annihilator of $\quoted{uf^\alpha}$ in $D_{X_\CC}$ also annihilates $u f^\alpha$. Consequently, $u f^\alpha$ generates a holonomic $D_{X_\CC}$-module as well.
\end{proposition}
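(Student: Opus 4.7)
The plan is to lift a formal annihilation identity at $s = \alpha$ in $\mathscr{A}$ to an actual annihilation identity of generalized functions on $X(\R)$, exploiting the fact that $u \otimes f^s$ has two compatible interpretations on the dense open $X^+$: one in the $\mathscr{D}_{X_\CC}[s]$-module $\mathscr{M} \otimes \mathscr{N}_f$, and one as the genuine family of $C^\infty$ functions $uf^s$.

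First, on the algebraic side, suppose $P \in D_{X_\CC}$ annihilates $\quoted{uf^\alpha} := (u \otimes f^s) \bmod (s - \alpha)$ in $\mathscr{A}_\alpha$. Since $X$ is affine and $\mathscr{A}$ is generated over $D_{X_\CC}[s]$ by $u \otimes f^s$, there exists $T \in D_{X_\CC}[s]$ such that
\[
P \cdot (u \otimes f^s) \;=\; (s - \alpha) \, T \cdot (u \otimes f^s)
\]
inside $\Gamma(X_\CC,\, \mathscr{M} \otimes_{\mathscr{O}_{X_\CC}} \mathscr{N}_f)$. Unfolding both sides over $X^+_\CC$ via the explicit formula for the action on $\mathscr{N}_f$ yields finite expansions of the form
\[
\sum_i (Q_i u) \, g_i(s) \, f^{s - m_i}, \qquad Q_i \in D_{X_\CC},\ g_i \in \CC[s],\ m_i \in \Z_{\geq 0},
\]
whose data $\{(Q_i, g_i, m_i)\}$ is produced from $P$ and $T$ by bounded algebraic manipulation; in particular the orders of the $Q_i$ and the exponents $m_i$ are bounded by a constant depending only on $P$.

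Second, on the analytic side, I invoke Hypothesis \ref{hyp:cond-u} (or its strengthening \eqref{eqn:cond-u-strong}) to conclude that for $\Re(s) > k$, with $k$ depending only on $P$, each term $(Q_i u)(x) g_i(s) f(x)^{s - m_i}$ extends from $X^+(\R)$ by zero to a $C^N$ function on $X(\R)$, holomorphically in $s$, with $N$ arbitrarily prescribed. Thus $uf^s$, $P(uf^s)$, and $T(uf^s)$ all extend to holomorphic families of generalized functions on $X(\R)$ for $\Re(s) > k$. The identity $P(uf^s) = (s - \alpha)\, T(uf^s)$, valid tautologically as smooth functions on the dense open $X^+(\R)$, therefore extends to an identity of $C^N$ functions on $X(\R)$. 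Specializing $s = \alpha$ with $\Re(\alpha) > k$ gives $P(uf^\alpha) = 0$ as a generalized function on $X(\R)$, which is the first claim.

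For the final clause, the map $P \mapsto P(uf^\alpha)$ is a surjection $D_{X_\CC} \twoheadrightarrow D_{X_\CC} \cdot uf^\alpha$ whose kernel contains $J := \mathrm{Ann}_{D_{X_\CC}}(\quoted{uf^\alpha})$ by the previous step, so $D_{X_\CC} \cdot uf^\alpha$ is a quotient of $D_{X_\CC}/J \simeq D_{X_\CC} \cdot \quoted{uf^\alpha} \subset \mathscr{A}_\alpha$. Since $\mathscr{A}_\alpha$ is holonomic by Kashiwara's theorem, so are its submodules and their quotients. The main obstacle is the uniformity of the threshold $k$ in $P$: one needs Hypothesis \ref{hyp:cond-u} to bound simultaneously all derivatives of $u$ that arise from both $P \cdot (u \otimes f^s)$ and $T \cdot (u \otimes f^s)$, with the exponents in $f^{s-m_i}$ controlled. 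This is precisely what Hypothesis \ref{hyp:cond-u} delivers over the finite collection determined by $P$, and it also guarantees that the identity truly extends from $X^+(\R)$ to $X(\R)$ with no distributional error concentrated along $\partial X(\R)$, since for $\Re(s)$ large both sides are genuine continuous functions rather than merely generalized functions.
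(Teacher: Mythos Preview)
Your argument has a genuine gap in the uniformity of $k$. The threshold you produce depends on $P$ and, through the auxiliary operator $T$, on $\alpha$ as well; yet the statement demands a single $k$ such that for \emph{every} $\alpha$ with $\Re(\alpha) > k$ and \emph{every} $P$ in $\mathrm{Ann}_{D_{X_\CC}}(\quoted{uf^\alpha})$, the operator $P$ kills $uf^\alpha$. You flag this as ``the main obstacle'' but do not resolve it: invoking Hypothesis~\ref{hyp:cond-u} ``over the finite collection determined by $P$'' only yields a $k$ depending on that collection, hence on $P$. As $P$ ranges over the infinite-dimensional annihilator, the orders involved are a priori unbounded. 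Worse, since $T$ depends on $\alpha$, the requirement $\Re(\alpha) > k(P,T)$ needed to specialize $s = \alpha$ is circular: you cannot first fix $\alpha$, then manufacture $T$, and then demand $\Re(\alpha)$ exceed a threshold built from $T$.

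The paper supplies precisely the missing structural input: coherence of $\mathscr{A}$ over $\mathscr{D}_{X_\CC}$ implies that the ideal $I = \mathrm{Ann}_{D_{X_\CC}[s]}(u \otimes f^s)$ is \emph{finitely generated over $D_{X_\CC}$}, say by $Q_1(s), \ldots, Q_m(s)$. Then $\mathscr{A}_\alpha = D_{X_\CC}/(Q_1(\alpha), \ldots, Q_m(\alpha))$ for every $\alpha$, so the annihilator of $\quoted{uf^\alpha}$ is generated by operators whose orders are bounded by $N := \max_i \mathrm{ord}\, Q_i$, independently of $\alpha$. One chooses $k$ once so that $\Re(\alpha) > k$ makes $uf^\alpha$ of class $C^N$ on $X(\R)$. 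This also dispenses with your $(s-\alpha)T$ device entirely: since each $Q_i(s)$ already annihilates $u \otimes f^s$ in $\mathscr{A}$, the continuous function $Q_i(\alpha)(uf^\alpha)$ vanishes on the dense open $X^+(\R)$, hence on all of $X(\R)$. Your final paragraph on holonomicity is fine.
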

\begin{proof}
	We repeat the arguments in \textit{loc.\  cit.} Write $\mathscr{A} = D_{X_\CC}[s]/I$ and recall that it is generated by $u \otimes f^s$. Since $\mathscr{A}$ is coherent over $\mathscr{D}_{X_\CC}$, we may find generators $Q_1(s), \ldots, Q_m(s)$ for $I$ over $D_{X_\CC}$ (cf. \cite[p.652]{BD92}). Hence $\mathscr{A}_\alpha = D_{X_\CC}/(Q_1(\alpha), \ldots, Q_m(\alpha))$. It suffices to show that $Q_i(\alpha)$ annihilates $uf^\alpha$ for $i=1, \ldots, m$, if $\Re(\alpha) \gg 0$.
	
	Over $X^+(\R)$ we clearly have $Q_i(\alpha)(uf^\alpha)=0$; in fact it suffices to check this on $\Omega$. Now
	\begin{compactitem}
		\item take $N$ greater than the orders of $Q_1(s), \ldots, Q_m(s) \in D_{X_\CC}[s]$;
		\item take $k$ so large that $\Re(\alpha) > k$ implies $uf^\alpha$ is of class $C^N$ over $X(\R)$.
	\end{compactitem}
	When $\Re(\alpha) > k$, we see $Q_i(\alpha)(uf^\alpha)$ is represented by a continuous function on $X(\R)$, hence it is zero everywhere.
	
	As a consequence, the $D_{X_\CC}$-module generated by $uf^\alpha$ is a quotient of $\mathscr{A}_\alpha$ whenever $\Re(\alpha) > k$, therefore is holonomic.
\end{proof}

In this final part, let us sketch how this entails the existence of the \emph{$b$-functions} of Bernstein--Sato. Keep the assumptions above and consider the $\mathscr{D}_{X_\CC}$-endomorphisms
\begin{align*}
	s: \mathscr{A} & \longrightarrow \mathscr{A}, \quad \text{multiplication by}\; s, \\
	t: \mathscr{A} & \longrightarrow \mathscr{A}, \quad u \otimes a(s) \otimes f^s \mapsto u \otimes a(s+1) \otimes f^{s+1}.
\end{align*}
Note that $t$ is injective and $[t,s] = t$. Now, as explained in \cite[p.109]{Kas03} in the local analytic setting, because $\mathscr{A}$ is subholonomic, $\mathscr{A}/t\mathscr{A}$ is holonomic. This in turn implies that $\End_{D_{X_\CC}}(\mathscr{A}/t\mathscr{A})$ is finite-dimensional over $\CC$. Denote by $b(s) \in \CC[s]$ the minimal polynomial of $s$ as an endomorphism of $\mathscr{A}/t\mathscr{A}$. We conclude that there exists $P(s) \in D_{X_\CC}[s]$ such that
\begin{equation*}
	P(s) (u \otimes f^s) = b(s) u f^{s-1}.
\end{equation*}
The equation holds when $s$ is specialized to $\alpha \in \CC$; by Proposition \ref{prop:prolongement-ED}, it is also satisfied by the generalized function $u f^\alpha$ on $X(\R)$ when $\Re(\alpha) > k$. This differential equation can be used to extend $u f^\alpha$ meromorphically to all $\alpha \in \CC$. One can also deduce information on the poles of $uf^s$ from the zeros of $b$. 

In \S\ref{sec:meromorphy} we will need a multivariate generalization: $\partial X = \bigcup_{i=1}^r \{f_i=0\}$ where $f_i \in \R[X]$ may be assumed non-negative. Put $\Lambda_A := A^{\oplus r}$ for any commutative ring $A$. Now set
\begin{gather*}
	f^s := \prod_{i=1}^r f_i^{s_i}
\end{gather*}
for $s = (s_1, \ldots, s_r) \in \Lambda_\CC$ (formally). The estimate in Hypothesis \ref{hyp:cond-u} should change into
\begin{gather}\label{eqn:cond-u-multivariate}
	\left| (v_1^{i_1} \cdots v_l^{i_l} u)(x) \right| \leq C_V f(x)^{-k}, \quad \text{for some}\; k = (k_1, \ldots, k_r) \in \Lambda_\Z,
\end{gather}
and similarly for the strengthened version \eqref{eqn:cond-u-strong}. The results are similar. Of course, \eqref{eqn:cond-u-multivariate} can be subsumed into the original version by working with $f_\natural := f_1 \cdots f_r$

The key point is to have a $b$-function in the multivariate case. This has been done in \cite{BD92} by reducing to the $r=1$ case. We record their result below.

\begin{theorem}[{\cite[Théorème A.3]{BD92}}]\label{prop:multivariate-b}
	Let $a = (a_1, \ldots, a_r) \in \Lambda_\Z$ with $a_i \geq 0$ for all $i$. There exist
	\begin{compactitem}
		\item an algebraic differential operator with $r$ parameters $P \in D_X[s_1, \ldots, s_r]$, 
		\item a nonzero polynomial $b(s_1, \ldots, s_r) \in \CC[s_1, \ldots, s_r]$,
	\end{compactitem}
	such that
	\[ P (u \otimes f^s) = b(s_1, \ldots, s_r) \cdot u \otimes f^{s-a}, \quad s = (s_1, \ldots, s_r) \in \Lambda_\CC. \]
	The same equation holds if we suppose $\Re(s_i) \gg 0$ for all $i$, and consider $uf^s$ and $uf^{s-a}$ as generalized functions on $X(\R)$ of order zero.
\end{theorem}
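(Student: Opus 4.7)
The plan is to reduce Theorem \ref{prop:multivariate-b} to the univariate $b$-function sketched just above. First, if the $b$-equation is established for shifts by each standard basis vector $e_1, \ldots, e_r$, then composing them (iterating $a_1$ times in $s_1$, then $a_2$ times in $s_2$, etc., and taking the product of the $P$'s and $b$'s as we go) produces a $b$-equation for any $a \in \Z_{\geq 0}^r$. It therefore suffices to treat the case $a = e_i$, and by symmetry we fix $i = 1$.

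To produce the $e_1$-shift I would freeze $s_2, \ldots, s_r$ as parameters in the ground field. Set $K := \CC(s_2, \ldots, s_r)$ and base-change everything to $K$; this is flat and preserves holonomicity. On $X^+_K$ the formal expression $g := f_2^{s_2} \cdots f_r^{s_r}$ is a rank-one invertible twist, so $(\mathscr{M}' \otimes_\CC K) \cdot g$ is a holonomic $\mathscr{D}_{X^+_K}$-module. Applying the univariate construction, namely Kashiwara's subholonomicity of $\mathscr{A}_K := \mathscr{D}_{X_K}[s_1] \cdot (u \otimes g \otimes f_1^{s_1})$ together with the finite-dimensionality over $K$ of $\End_{D_{X_K}}(\mathscr{A}_K / t_1 \mathscr{A}_K)$, yields $\tilde{P}(s_1) \in D_{X_K}[s_1]$ and a nonzero $\tilde{b}(s_1) \in K[s_1]$ with
\[
  \tilde{P}(s_1) \cdot (u \otimes g \otimes f_1^{s_1}) \;=\; \tilde{b}(s_1) \cdot (u \otimes g \otimes f_1^{s_1 - 1}).
\]
Clearing denominators in $\CC[s_2, \ldots, s_r]$ on both sides gives $P(s) \in D_{X_\CC}[s_1, \ldots, s_r]$ and a nonzero $b(s) \in \CC[s_1, \ldots, s_r]$ satisfying the required identity $P(s) \cdot (u \otimes f^s) = b(s) \cdot (u \otimes f^{s - e_1})$ inside the $\mathscr{D}_{X_\CC}[s]$-module $\mathscr{A}$.

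The last step is to upgrade this algebraic identity to an equality of generalized functions on $X(\R)$ when $\Re(s_i) \gg 0$. Specialization at $s = \alpha \in \Lambda_\CC$ gives $P(\alpha)(u \otimes f^\alpha) = b(\alpha)(u \otimes f^{\alpha - a})$ in $\mathscr{A}_\alpha$. Choosing $k \in \Lambda_\Z$ with $k_i$ exceeding $a_i$ plus the order of $P$ plus the exponents appearing in the bounds on $u$, the growth estimate \eqref{eqn:cond-u-multivariate} guarantees that, for $\Re(\alpha_i) > k_i$, both $uf^\alpha$ and $uf^{\alpha - a}$ are of class $C^N$ (for any prescribed $N$) on $X(\R)$. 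The argument of Proposition \ref{prop:prolongement-ED} then applies verbatim: the difference $P(\alpha) \cdot uf^\alpha - b(\alpha) \cdot uf^{\alpha - a}$ is represented by a continuous function on $X(\R)$ that vanishes on the dense open $X^+(\R)$, and hence vanishes identically.

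The step I expect to be the main obstacle is checking that the univariate Kashiwara machinery survives the base change $\CC \leadsto K = \CC(s_2, \ldots, s_r)$: specifically, that $\mathscr{A}_K$ is still subholonomic as a $\mathscr{D}_{X_K}$-module and that $\End_{D_{X_K}}(\mathscr{A}_K / t_1 \mathscr{A}_K)$ is finite-dimensional over $K$, so that the minimal polynomial of multiplication by $s_1$ actually furnishes $\tilde{b}(s_1)$. Since holonomicity, subholonomicity, and characteristic varieties are all well-behaved under flat extension of the coefficient field, the arguments of \cite[Appendice A]{BD92} carry over. An alternative, avoiding the base change, is to regard $\mathscr{A}$ directly as a module over $\mathscr{D}_X \otimes \CC[s_2, \ldots, s_r]$ and extract $b(s_1, \ldots, s_r)$ from the minimal polynomial of $s_1$ acting on the generic fiber of $\mathscr{A}/t_1 \mathscr{A}$ over $\Spec \CC[s_2, \ldots, s_r]$; either route reduces the multivariate assertion to Kashiwara's univariate subholonomicity theorem.
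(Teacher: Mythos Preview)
The paper does not itself prove this theorem; it merely records it as \cite[Théorème A.3]{BD92} and notes in the surrounding text that the argument there proceeds ``by reducing to the $r=1$ case.'' Your proposal supplies precisely such a reduction: collapse a general $a$ to the unit shifts $e_i$ by iteration, base-change to $K = \CC(s_2,\ldots,s_r)$ so that the univariate Kashiwara machinery applies to the twisted holonomic module, clear denominators to land back in $D_{X_\CC}[s_1,\ldots,s_r]$, and finally upgrade to an identity of generalized functions via the regularity argument of Proposition~\ref{prop:prolongement-ED}. This is correct and matches the strategy the paper attributes to \cite{BD92}; the only point requiring care, which you flag yourself, is that subholonomicity and the finiteness of $\End$ survive the flat field extension $\CC \leadsto K$, and they do.
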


For the applications in \S\ref{sec:meromorphy}, $X(\R)$ will always be equipped with a Radon measure, so $uf^s$ can be viewed as a distribution. The strengthened form \eqref{eqn:cond-u-strong} of Hypothesis \ref{hyp:cond-u} will also hold, thus $uf^s$ will actually be tempered in its domain of meromorphy; see Lemma \ref{prop:Nash-bound} and \ref{prop:u-estimate}.

\section{Holonomic \texorpdfstring{$D$}{D}-modules: admissibility}\label{sec:admissibility}
The results below are due to Ginzburg \cite{Gin89}, but we have to cast them into a form suitable for our purposes. As before, let us begin with an algebraically closed field $\Bbbk$ of characteristic zero, and $G$ denotes a connected reductive $\Bbbk$-group. The dual $\mathfrak{g}^*$ of $\mathfrak{g}$ carries the co-adjoint action of $G$.

Let $Z$ be an affine $\Bbbk$-variety with right $G$-action. Each $v \in \mathfrak{g}$ induces a vector field on $Z$; pairing with differential forms gives rise to the \emph{moment map}
\[ \mmap: T^* Z \longrightarrow \mathfrak{g}^*. \]
On the other hand, every $D_Z$-module is endowed with a $\mathcal{U}(\mathfrak{g})$-module structure in this manner. Hereafter, suppose $Z$ is a homogeneous $G$-space with base point $x_0$; set $H = \Stab_G(x_0)$. The cotangent space at $x_0$ is thus identified with $\mathfrak{h}^\perp \subset \mathfrak{g}^*$, and the moment map becomes
\begin{equation}\label{eqn:mmap}\begin{aligned}
	\mmap: T^* Z \rightiso \mathfrak{h}^\perp \utimes{H} G & \longrightarrow \mathfrak{g}^* \\
	[\omega, g] & \longmapsto g^{-1} \omega g.
\end{aligned}\end{equation}
Here $\mathfrak{h}^\perp \utimes{H} G$ means the \emph{contracted product}: we impose the relation $[\omega, hg] = [h^{-1} \omega h, g]$ for all $h \in H$. These arrows are all $G$-equivariant.

In what follows, we use the Killing form on $\mathfrak{g}_\text{der}$ to embed the nilpotent cone $\mathfrak{g}_\mathrm{nil}$ into $\mathfrak{g}^*$. Its image equals $\{y \in \mathfrak{g}^*: \overline{\Ad(G) y} \ni 0 \}$.

We say an action of an algebra or a group on a $\Bbbk$-vector space $V$ is locally finite, if every $v \in V$ is contained in a finite-dimensional invariant $\Bbbk$-subspace.

Let $\mathfrak{k}$ be a Lie subalgebra of $\mathfrak{g}$ of the form $\mathfrak{k} = \mathfrak{g}^\tau$, where $\tau$ is an involution of $G$.
\begin{definition}[Cf.\ {\cite[1.2]{Gin89}}]
	Given $\mathfrak{k}$ as above, we say a finitely generated $D_Z$-module $M$ is \emph{admissible} if $M$ is locally finite under both  $\mathcal{U}(\mathfrak{k})$ and $\mathcal{Z}(\mathfrak{g})$.
\end{definition}

Thus an admissible $D_Z$-module $M$ is generated by a finite-dimensional vector subspace $M_0$ that is stable under $\mathcal{U}(\mathfrak{k})$ and $\mathcal{Z}(\mathfrak{g})$.

\begin{proposition}
	Let $M$ be a $D_Z$-module generated by a finite-dimensional vector subspace $M_0$ that is stable under $\mathcal{U}(\mathfrak{k})$ and $\mathcal{Z}(\mathfrak{g})$. Then
	\[ \mathrm{Ch}(M) \subset \mmap^{-1}\left( \mathfrak{g}_\mathrm{nil} \cap \mathfrak{k}^\perp \right). \]
	Furthermore, $\mmap^{-1}\left( \mathfrak{g}_\mathrm{nil} \cap \mathfrak{k}^\perp \right)$ is a Lagrangian subvariety of $T^* Z$ provided that for any nilpotent orbit $\mathcal{O} \subset \mathfrak{g}_\mathrm{nil}$, the subvariety $\mmap^{-1}(\mathcal{O} \cap \mathfrak{k}^\perp)$ is Lagrangian.
\end{proposition}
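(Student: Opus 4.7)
The plan is to exhibit $\mathrm{Ch}(M)$ as the support of a coherent sheaf on $T^{*}Z$ arising from a good filtration of $M$, and then to leverage the $\mathcal{U}(\mathfrak{k})$- and $\mathcal{Z}(\mathfrak{g})$-stability of $M_0$ to produce enough elements of $\mathscr{O}(T^{*}Z)$ that annihilate the associated graded; via the moment map $\mmap$ these will cut out $\mmap^{-1}(\mathfrak{g}_{\mathrm{nil}} \cap \mathfrak{k}^{\perp})$.

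First I would equip $M$ with the good filtration $F_n M := D_Z^{\leq n} \cdot M_0$ coming from the order filtration on $D_Z$. The associated graded $\gr_F M$ is a coherent $\gr D_Z = \mathscr{O}(T^{*}Z)$-module whose support is $\mathrm{Ch}(M)$, and the anchor map $\mathfrak{g} \hookrightarrow D_Z^{\leq 1}$ induces $\mathrm{Sym}(\mathfrak{g}) \to \mathscr{O}(T^{*}Z)$, which is exactly $\mmap^{*}$. For $v \in \mathfrak{k}$ and any $D \in D_Z^{\leq n}$, $m_0 \in M_0$, the key computation $vDm_0 = D(vm_0) + [v,D]m_0$, combined with $vm_0 \in M_0$ and $[v,D] \in D_Z^{\leq n}$, yields $v \cdot F_n M \subset F_n M$, one order less than the generic expectation. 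Hence the degree-one symbol $\mmap^{*}(v)$ annihilates $\gr_F M$, giving $\mathrm{Ch}(M) \subset \mmap^{-1}(\mathfrak{k}^{\perp})$.

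For the central part I would invoke Kostant's theorem, which identifies $\gr \mathcal{Z}(\mathfrak{g})$ with $\mathrm{Sym}(\mathfrak{g})^{G}$, so that every homogeneous $\bar{z} \in \mathrm{Sym}^{k}(\mathfrak{g})^{G}$ of positive degree $k$ lifts to some $z \in \mathcal{Z}(\mathfrak{g}) \cap \mathcal{U}_{k}(\mathfrak{g})$. The analogous identity $zDm_0 = D(zm_0) + [z,D]m_0$, together with $zm_0 \in M_0$ by hypothesis and $[z,D] \in D_Z^{\leq n+k-1}$, now gives $z \cdot F_n M \subset F_{n+k-1}M$, so that $\bar{z}$ acts as zero on $\gr_F M$. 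Therefore $\mmap^{*}(\mathrm{Sym}(\mathfrak{g})^{G}_{+})$ annihilates $\gr_F M$; using Kostant's theorem once more for the identity $V(\mathrm{Sym}(\mathfrak{g})^{G}_{+}) = \mathfrak{g}_{\mathrm{nil}}$, I conclude $\mathrm{Ch}(M) \subset \mmap^{-1}(\mathfrak{g}_{\mathrm{nil}})$. Intersecting with the previous inclusion yields the first assertion.

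The Lagrangian claim is then a formal consequence: as there are only finitely many nilpotent $G$-orbits, $\mmap^{-1}(\mathfrak{g}_{\mathrm{nil}} \cap \mathfrak{k}^{\perp})$ decomposes as the finite union $\bigcup_{\mathcal{O}} \mmap^{-1}(\mathcal{O} \cap \mathfrak{k}^{\perp})$, each piece Lagrangian by hypothesis, whence the union is isotropic and pure of dimension $\dim Z$. I expect the main obstacle to be the third paragraph, namely keeping careful track of the drop-by-one in the order filtration inside the commutator $[z,D]$ and simultaneously exploiting Kostant's identification to ensure that enough central symbols are at our disposal; the other steps are comparatively routine.
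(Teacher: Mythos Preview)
Your proposal is correct and follows essentially the same route as the paper: both use the good filtration $F_n M = D_Z^{\leq n} M_0$, show via the commutator identity that the symbols of $\mathfrak{k}$ and of $\mathcal{Z}_+(\mathfrak{g})$ annihilate $\gr_F M$, identify the resulting vanishing locus with $\mmap^{-1}(\mathfrak{g}_{\mathrm{nil}}\cap\mathfrak{k}^\perp)$, and deduce the Lagrangian statement from the finiteness of nilpotent orbits. Your handling of the central part via the order drop $[z,D]\in D_Z^{\leq n+k-1}$ is in fact more explicit than the paper's terse claim that $\mathcal{Z}_+(\mathfrak{g})$ ``leaves each $F_iM$ stable'' (which it attributes to Ginzburg), but the underlying mechanism is the same.
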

\begin{proof}
	For the first assertion, the arguments in \cite[Lemma 2.1.2]{Gin89} are reproduced as follows. Take the order filtrations $\mathcal{U}_0(\mathfrak{g}) \subset \mathcal{U}_1(\mathfrak{g}) \subset \cdots$ and $D_{Z,0} \subset D_{Z,1} \subset \cdots$. We use the good filtration $F_i M := D_{Z,i} M_0$ for $M$ to get $\text{Ch}(M)$. Denote by $\mathcal{Z}_+(\mathfrak{g}) \subset \mathcal{Z}(\mathfrak{g})$ the augmentation ideal. Assumptions on the stability of $M_0$ imply that $\mathfrak{k}$ and $\mathcal{Z}_+(\mathfrak{g})$ leave each $F_i M$ stable, cf.\ the proof of Theorem \ref{prop:Lagrangian-criterion-0} below, whilst $\mathfrak{k}, \mathcal{Z}_+(\mathfrak{g})$ map to $\gr_{\geq 1} \mathcal{U}(\mathfrak{g})$. Recall that the zero locus of $\gr(\mathcal{Z}_+(\mathfrak{g}))$ (resp. of $\mathfrak{k}$) in $\mathfrak{g}^* = \Spec(\gr(\mathcal{U}(\mathfrak{g})))$ is $\mathfrak{g}_\text{nil} \hookrightarrow \mathfrak{g}^*$ (resp. $\mathfrak{k}^\perp$). From the description \eqref{eqn:mmap}, we infer that $\Supp(\gr_F(M)) \subset \mmap^{-1}\left( \mathfrak{g}_\mathrm{nil} \cap \mathfrak{k}^\perp \right).$

	The second assertion stems from the finiteness of nilpotent orbits.
\end{proof}

In the upcoming applications, the holonomicity of $M$ will be shown by the criteria below.

\begin{proposition}[{\cite[Proposition 1.5.1]{Gin89}}]\label{prop:mmap-nilorbit}
	Let $H, K \subset G$ be closed subgroups and set $Z := H \backslash G$. For every nilpotent orbit $\mathcal{O}$ in $\mathfrak{g}$, the following are equivalent.
	\begin{enumerate}[(i)]
		\item Inside $T^* Z$, the subvariety $\mmap{^{-1}} (\mathcal{O} \cap \mathfrak{k}^\perp)$ is isotropic (resp. co-isotropic, Lagrangian).
		\item Inside $\mathcal{O}$, the subvarieties $\mathcal{O} \cap \mathfrak{h}^\perp$ and $\mathcal{O} \cap \mathfrak{k}^\perp$ are both isotropic (resp. co-isotropic, Lagrangian).
	\end{enumerate}
	Here we refer to the usual symplectic structures on $T^* Z$ and on $\mathcal{O}$ (Kirillov--Kostant--Souriau).
\end{proposition}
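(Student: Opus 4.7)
My plan is to reduce the statement to a linear-symplectic computation at a generic smooth point $p \in \mmap^{-1}(\mathcal{O} \cap \mathfrak{k}^\perp)$ mapping to $y := \mmap(p) \in \mathcal{O} \cap \mathfrak{k}^\perp$. Two standard facts of Hamiltonian geometry drive the computation: the orthogonality identity $(T_p(G \cdot p))^{\perp\omega} = \ker d\mmap_p$, and the intertwining property that $\omega_{T^*Z}$ restricted to $T_p(G \cdot p)$ equals, via $d\mmap_p$, the KKS form $\omega_\mathcal{O}$ on $T_y \mathcal{O}$. From these one deduces, for any linear subspace $S \subset \mathfrak{g}^*$ containing $0$, the useful formula
\[
	\bigl(d\mmap_p^{-1}(S)\bigr)^{\perp\omega} = \{\xi_{T^*Z}(p) : \xi \in \mathfrak{g},\; \xi|_{S \cap \Image(d\mmap_p)} = 0\}.
\]
The derivation is direct: $\ker d\mmap_p \subset d\mmap_p^{-1}(S)$, so the orthogonal is forced to lie in $T_p(G\cdot p)$; writing an element as $\xi_{T^*Z}(p)$, the pairing becomes $\langle d\mmap_p(\cdot), \xi\rangle$, which vanishes on $d\mmap_p^{-1}(S)$ precisely when $\xi$ annihilates $S \cap \Image(d\mmap_p)$.

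The second ingredient is the explicit description $T^*Z \simeq \mathfrak{h}^\perp \utimes{H} G$ given in \eqref{eqn:mmap}, under which $\mmap^{-1}(\mathcal{O}) = (\mathfrak{h}^\perp \cap \mathcal{O}) \utimes{H} G$ becomes a fiber bundle over $\mathcal{O}$ whose fibers are locally open subsets of $\mathfrak{h}^\perp \cap \mathcal{O}$, modulo the relevant $H$- and stabilizer-actions. Equivalently, the Marsden--Weinstein reduction $\mmap^{-1}(\mathcal{O}) / G \simeq H \backslash (\mathfrak{h}^\perp \cap \mathcal{O})$ acquires its symplectic structure from $\omega_\mathcal{O}$ on $\mathcal{O}$. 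With both ingredients in hand, I would apply them to $Y := \mmap^{-1}(\mathcal{O} \cap \mathfrak{k}^\perp)$: compute $T_p Y = d\mmap_p^{-1}(T_y \mathcal{O} \cap \mathfrak{k}^\perp)$ and its orthogonal via the formula above with $S = T_y\mathcal{O} \cap \mathfrak{k}^\perp$, then compare. The comparison splits naturally into a \emph{horizontal} piece equivalent to $\mathcal{O} \cap \mathfrak{k}^\perp$ being isotropic (resp.\ coisotropic, Lagrangian) in $\mathcal{O}$ for $\omega_\mathcal{O}$, and a \emph{vertical} piece coming from the fibers of $\mmap$, which after transport through the reduction amounts to the same condition on $\mathfrak{h}^\perp \cap \mathcal{O}$ in $\mathcal{O}$.

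The Lagrangian case can be cross-checked by dimensions: $2\dim \mmap^{-1}(\mathcal{O} \cap \mathfrak{k}^\perp) = \dim T^*Z$ reduces via the fiber-bundle description to $\dim(\mathcal{O} \cap \mathfrak{k}^\perp) + \dim(\mathfrak{h}^\perp \cap \mathcal{O}) = \dim \mathcal{O}$, which is exactly the sum of two Lagrangian dimensions inside $\mathcal{O}$. The main obstacle I anticipate is the passage from generic smooth points to the full subvarieties: the intersections $\mathcal{O} \cap \mathfrak{h}^\perp$ and $\mathcal{O} \cap \mathfrak{k}^\perp$ may fail to be transverse or even smooth, so the pointwise linear-symplectic argument must be carried out on each irreducible component. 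Since only finitely many nilpotent orbits and finitely many components of these linear slices are involved, a stratification combined with lower semicontinuity of the symplectic rank should close the gap, but it is where I expect the most technical care to be required.
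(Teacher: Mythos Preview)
The paper does not prove this proposition at all: it is quoted verbatim from \cite[Proposition 1.5.1]{Gin89} and no proof environment follows the statement. So there is nothing in the paper itself to compare your proposal against.

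That said, your sketch is the standard Hamiltonian-reduction argument and is essentially what Ginzburg does. The two key ingredients you isolate are the right ones: the coisotropic submanifold $\mmap^{-1}(\mathcal{O}) \subset T^*Z$ carries two transverse null-type fibrations, one by $\mmap$-fibres (reducing to $\mathcal{O}$ with its KKS form) and one via the contracted-product description \eqref{eqn:mmap} (reducing, up to the $H$-action, to $\mathfrak{h}^\perp \cap \mathcal{O}$ viewed inside $\mathcal{O}$). Your dimension count for the Lagrangian case is correct: $\dim \mmap^{-1}(\mathcal{O} \cap \mathfrak{k}^\perp) = \dim(\mathcal{O} \cap \mathfrak{k}^\perp) + \dim(\mathcal{O} \cap \mathfrak{h}^\perp) + \dim G_y - \dim H$, so equality with $\dim Z$ is exactly $\dim(\mathcal{O} \cap \mathfrak{k}^\perp) + \dim(\mathcal{O} \cap \mathfrak{h}^\perp) = \dim \mathcal{O}$.

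One genuine caution: you write $T_p Y = d\mmap_p^{-1}(T_y\mathcal{O} \cap \mathfrak{k}^\perp)$, which implicitly identifies $T_y(\mathcal{O} \cap \mathfrak{k}^\perp)$ with $T_y\mathcal{O} \cap \mathfrak{k}^\perp$. One only has the inclusion $\subset$ in general; equality requires the intersection to be clean at $y$. For the isotropic direction this is harmless (the inclusion goes the right way), but for the coisotropic direction you need the reverse, and the same issue arises on the $\mathfrak{h}^\perp$-side. You correctly flag this as the delicate point. The fix is the usual one: the notions ``isotropic/co-isotropic/Lagrangian subvariety'' are defined componentwise at generic smooth points, where the relevant tangent spaces do compute as linear intersections, so working component-by-component closes the gap.
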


\begin{proposition}\label{prop:Lagrangian-criterion}
	Let $\mathfrak{h} := \mathfrak{g}^\theta$ for some involution $\theta: G \to G$. Then $\mathcal{O} \cap \mathfrak{h}^\perp$ is a Lagrangian subvariety of $\mathcal{O}$ for every nilpotent orbit $\mathcal{O} \subset \mathfrak{g}_\mathrm{nil}$.
\end{proposition}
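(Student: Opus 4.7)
\noindent\textbf{Proof proposal for Proposition~\ref{prop:Lagrangian-criterion}.}

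The plan is to reduce the assertion to the classical Kostant--Rallis theory of nilpotent orbits for symmetric pairs. First I would pass from $\mathfrak{g}^*$ to $\mathfrak{g}$ by identifying $\mathfrak{g}^* \simeq \mathfrak{g}$ via the Killing form on $\mathfrak{g}_\text{der}$ (plus any nondegenerate form on the center). Since $\theta$ is an involutive automorphism preserving this form, the $(+1)$-eigenspace $\mathfrak{h}$ and $(-1)$-eigenspace $\mathfrak{q}$ are orthogonal; thus $\mathfrak{h}^\perp$ is carried to $\mathfrak{q}$, and $\mathcal{O} \cap \mathfrak{h}^\perp$ becomes $\mathcal{O} \cap \mathfrak{q}$. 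The nilpotent orbit $\mathcal{O}$ remains symplectic with the KKS form, whose value at $x \in \mathcal{O}$ is
\[ \omega_x([a,x],[b,x]) = \lrangle{x, [a,b]}, \quad a,b \in \mathfrak{g}. \]

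Isotropy is the easy half. At a smooth point $x \in \mathcal{O} \cap \mathfrak{q}$ I would identify the tangent space: because $[\mathfrak{h},x] \subset \mathfrak{q}$ and $[\mathfrak{q},x] \subset \mathfrak{h}$, the decomposition $[\mathfrak{g},x] = [\mathfrak{h},x] \oplus [\mathfrak{q},x]$ refines the decomposition $\mathfrak{g} = \mathfrak{q} \oplus \mathfrak{h}$, giving
\[ T_x(\mathcal{O} \cap \mathfrak{q}) \subset [\mathfrak{g},x] \cap \mathfrak{q} = [\mathfrak{h},x]. \]
For $a,b \in \mathfrak{h}$ we have $[a,b] \in \mathfrak{h}$, which is orthogonal to $x \in \mathfrak{q}$, so $\omega_x([a,x],[b,x])=0$. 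Hence each irreducible component of $\mathcal{O} \cap \mathfrak{q}$ is isotropic, and in particular $\dim(\mathcal{O}\cap\mathfrak{q}) \leq \tfrac{1}{2}\dim\mathcal{O}$.

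For the reverse dimension inequality I would invoke Kostant--Rallis. Given a nilpotent $x \in \mathfrak{q}$ one can find, after $H$-conjugation, a Jacobson--Morozov triple $(x,h,y)$ with $h \in \mathfrak{h}$ and $y \in \mathfrak{q}$; this is the ``normal triple'' construction in Kostant--Rallis (1971). The involution $\theta$ stabilizes $\mathfrak{g}^x$ (since it reverses the sign of $x$), and the grading by $\mathrm{ad}(h)$-weights is also $\theta$-stable. From this setup together with the alternating sign of $\theta$ on the weight vectors of each $\mathfrak{sl}_2$-constituent of $\mathfrak{g}$, one extracts the rank identity
\[ \dim\mathfrak{h}^x - \dim\mathfrak{q}^x = \dim\mathfrak{h} - \dim\mathfrak{q}, \]
equivalently $\dim[\mathfrak{h},x] = \dim[\mathfrak{q},x]$. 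Combined with the disjoint decomposition $[\mathfrak{g},x] = [\mathfrak{h},x] \oplus [\mathfrak{q},x]$ above, this yields $\dim(H\cdot x) = \dim[\mathfrak{h},x] = \tfrac{1}{2}\dim\mathcal{O}$, and \emph{a fortiori} $\dim(\mathcal{O}\cap\mathfrak{q}) \geq \tfrac{1}{2}\dim\mathcal{O}$. Together with the isotropy bound, we conclude that $\mathcal{O}\cap\mathfrak{q}$ is Lagrangian in $\mathcal{O}$.

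The main obstacle is precisely the rank identity, which is not formal; it rests on the Kostant--Rallis analysis of normal $\mathfrak{sl}_2$-triples and ultimately on the fact that the principal $H$-orbit in the nilpotent variety of $\mathfrak{q}$ has the expected dimension. Everything else --- the Killing-form identification, the bracket compatibility $[\mathfrak{h},\mathfrak{q}]\subset\mathfrak{q}$, $[\mathfrak{q},\mathfrak{q}]\subset\mathfrak{h}$, and the isotropy computation --- is essentially bookkeeping. I would therefore organize the write-up by first performing the setup and isotropy step in full, then citing \cite{Gin89} or Kostant--Rallis directly for the dimension equality, noting that the empty intersection case $\mathcal{O}\cap\mathfrak{q}=\emptyset$ is vacuous within the subsequent application of Proposition~\ref{prop:mmap-nilorbit}.
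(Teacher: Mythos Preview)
Your proposal is correct and in fact unpacks precisely what the paper leaves as a bare citation: the paper's proof consists of the single sentence ``Done in the first part of the proof of \cite[Proposition 3.1.1]{Gin89},'' and Ginzburg's argument there is exactly the Kostant--Rallis route you describe --- the isotropy computation via $[\mathfrak{h},\mathfrak{h}]\subset\mathfrak{h}$ together with the rank identity $\dim[\mathfrak{h},x]=\dim[\mathfrak{q},x]$ coming from normal $\mathfrak{sl}_2$-triples. So there is no divergence; you have simply supplied the content behind the citation.
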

\begin{proof}
	Done in the first part of the proof of \cite[Proposition 3.1.1]{Gin89}.
\end{proof}

Now enters harmonic analysis. Take $G$ to be a connected reductive $\R$-group, $X^+ \simeq H \backslash G$ an affine homogeneous $G$-space. Pick a Cartan involution $\tau$ of $G$ that gives rise to a maximal compact subgroup $K \subset G(\R)$. These objects can be complexified. We feed $\Bbbk = \CC$, $Z = X^+_\CC$ and $H_\CC, K_\CC \subset G_\CC$ into the preceding formalism.

Next, let $\pi$ be an irreducible SAF representation of $G(\R)$. Form the space $\mathcal{N}_\pi := \Hom_G(\pi, C^\infty(X^+))$ as in \S\ref{sec:harmonic-analysis}, as well as the $\mathcal{U}(\mathfrak{g}_\CC)$-stable subspace $V_\pi^{K\text{-fini}}$ that is locally finite under $\mathcal{U}(\mathfrak{k}_\CC)$. Given $\eta \in \mathcal{N}_\pi$ and $v \in V_\pi^{K\text{-fini}}$, the function $u := \eta(v) \in C^\infty(X^+(\R))$ generates a $D_{X^+_\CC}$-module $M$.

\begin{theorem}\label{prop:Lagrangian-criterion-0}
	In the circumstance above, $M := D_{X^+_\CC} u$ is an admissible $D_{X^+_\CC}$-module. It is holonomic if $\mathcal{O} \cap \mathfrak{h}^\perp_\CC$ is a Lagrangian subvariety of $\mathcal{O}$ for any nilpotent orbit $\mathcal{O} \subset \mathfrak{g}_{\mathrm{nil}, \CC}$.
\end{theorem}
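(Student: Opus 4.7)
The plan is to exhibit a finite-dimensional subspace $M_0 \subset M$ that generates $M$ over $D_{X^+_\CC}$ and is stable under both $\mathcal{U}(\mathfrak{k}_\CC)$ and $\mathcal{Z}(\mathfrak{g}_\CC)$; admissibility then follows from the definition, and the preceding proposition combined with Propositions \ref{prop:mmap-nilorbit} and \ref{prop:Lagrangian-criterion} will yield holonomicity under the stated hypothesis.

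First I would establish the key intertwining property of $\eta$. Differentiating the equivariance identity $\eta(\pi(g)w)(x) = \eta(w)(xg)$ at $g = 1$ gives
\[ \eta(Yw) = D_Y \eta(w), \quad Y \in \mathfrak{g}_\CC,\; w \in V_\pi, \]
where $D_Y$ is the vector field on $X^+$ associated to $Y$. Thus $\eta$ intertwines the $\mathcal{U}(\mathfrak{g}_\CC)$-action on $V_\pi$ with the $\mathcal{U}(\mathfrak{g}_\CC)$-action on $C^\infty(X^+(\R))$ coming from the canonical map $\mathcal{U}(\mathfrak{g}_\CC) \to D_{X^+_\CC}$ --- precisely the action entering the notion of admissibility. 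Consequently, $\eta$ carries $\mathcal{U}(\mathfrak{k}_\CC)$- and $\mathcal{Z}(\mathfrak{g}_\CC)$-stable subspaces of $V_\pi$ to $\mathcal{U}(\mathfrak{k}_\CC)$- and $\mathcal{Z}(\mathfrak{g}_\CC)$-stable subspaces of $M$.

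Next, set $W := \mathcal{U}(\mathfrak{k}_\CC) v \subset V_\pi^{K\text{-fini}}$. Since $v$ is $K$-finite and $\mathfrak{k}_\CC$ acts by differentiating the $K$-action on $K$-finite vectors, $W$ is finite-dimensional and evidently $\mathcal{U}(\mathfrak{k}_\CC)$-stable. Because $\pi$ is an irreducible SAF representation, Harish-Chandra's theorem supplies an infinitesimal character, so $\mathcal{Z}(\mathfrak{g}_\CC)$ acts on $V_\pi$ by scalars and a fortiori preserves $W$. Taking $M_0 := \eta(W)$, we obtain a finite-dimensional subspace of $M$ containing $u = \eta(v)$, stable under $\mathcal{U}(\mathfrak{k}_\CC)$ and $\mathcal{Z}(\mathfrak{g}_\CC)$; since $u$ generates $M$, so does $M_0$. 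This proves the first assertion.

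For the second assertion, the proposition immediately preceding the theorem, applied to $M_0$, yields
\[ \mathrm{Ch}(M) \subset \mmap^{-1}\bigl( \mathfrak{g}_{\mathrm{nil},\CC} \cap \mathfrak{k}_\CC^\perp \bigr), \]
and it also reduces the Lagrangian property of the right-hand side to showing, for each nilpotent orbit $\mathcal{O} \subset \mathfrak{g}_{\mathrm{nil},\CC}$, that $\mmap^{-1}(\mathcal{O} \cap \mathfrak{k}_\CC^\perp)$ is Lagrangian in $T^* X^+_\CC$. By Proposition \ref{prop:mmap-nilorbit}, this is equivalent to both $\mathcal{O} \cap \mathfrak{h}_\CC^\perp$ and $\mathcal{O} \cap \mathfrak{k}_\CC^\perp$ being Lagrangian in $\mathcal{O}$: the first is exactly the standing hypothesis, and the second is furnished by Proposition \ref{prop:Lagrangian-criterion} applied to the Cartan involution $\tau$. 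Together with Bernstein's inequality $\dim\mathrm{Ch}(M) \geq \dim X^+_\CC$, this forces equality, whence $M$ is holonomic. The main obstacle I anticipate is the bookkeeping in the first step --- checking that the $\mathcal{U}(\mathfrak{g}_\CC)$-action coming from the representation $\pi$ agrees with the one on $D_{X^+_\CC}$-modules used in the definition of admissibility --- since once the identity $\eta(Yw) = D_Y\eta(w)$ is in hand, the rest is a routine invocation of results already assembled.
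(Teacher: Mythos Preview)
Your argument for holonomicity matches the paper's exactly. For admissibility, however, there is a small gap: exhibiting a finite-dimensional $M_0$ stable under $\mathcal{U}(\mathfrak{k}_\CC)$ and $\mathcal{Z}(\mathfrak{g}_\CC)$ does \emph{not} follow ``from the definition'' --- the definition requires that \emph{all} of $M$ be locally finite under these actions, not just a generating subspace. The passage from one to the other needs an extra ingredient: for $\xi \in \mathfrak{g}_\CC$ and $P \in D_{X^+_\CC}$ one has $\xi(Pu) = [\xi,P]u + P(\xi u)$, and the adjoint action $P \mapsto [\xi,P]$ of $\mathfrak{g}_\CC$ on $D_{X^+_\CC}$ is locally finite (being the differential of the algebraic $G_\CC$-action on differential operators). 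With this in hand, $\mathcal{U}(\mathfrak{k}_\CC) \cdot Pu \subset \bigl(\mathrm{ad}\,\mathcal{U}(\mathfrak{k}_\CC) \cdot P\bigr) M_0$ is finite-dimensional, and similarly for $\mathcal{Z}(\mathfrak{g}_\CC)$. This is precisely the step the paper makes explicit; once you add it, your proof and the paper's coincide.

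Note that this gap does not affect the holonomicity conclusion, since the proposition you invoke is stated directly in terms of a stable generating subspace $M_0$ rather than admissibility.
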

\begin{proof}
	For every $\xi \in \mathfrak{g}$, regarded as a vector field on $X^+$, we have
	\[ (\xi P)u = [\xi ,P] u + P(\xi u), \quad P \in D_{X^+_\CC}. \]
	Note that the commutator $[\xi, P]$ is the differential of the $G_\CC$-action on $D_{X^+_\CC}$ (acting by transport of structure) in the $\xi$-direction; this $G_\CC$-action is locally finite since it is algebraically defined. Therefore the action $P \mapsto [\xi,P]$ of $\mathfrak{g}_\CC$ on $D_{X^+_\CC}$ is also locally finite. On the other hand,
	\begin{compactitem}
		\item $\mathcal{Z}(\mathfrak{g}_\CC)$ acts on $u = \eta(v)$ via the infinitesimal character of $\pi$,
		\item $\mathcal{U}(\mathfrak{k}_\CC) \cdot u$ is finite-dimensional since $v \in V_\pi^{K\text{-fini}}$.
	\end{compactitem}
	Hence $M$ is admissible. For any nilpotent orbit $\mathcal{O}$, Proposition \ref{prop:Lagrangian-criterion} entails that $\mathcal{O} \cap \mathfrak{k}^\perp_\CC$ is Lagrangian since $\mathfrak{k}_\CC = \mathfrak{g}_\CC^\tau$. If $\mathcal{O} \cap \mathfrak{h}^\perp_\CC$ is Lagrangian as well, Proposition \ref{prop:mmap-nilorbit} will lead to
	\[ \dim \text{Ch}(M) \leq \dim \mmap^{-1}\left( \mathfrak{g}_\mathrm{nil} \cap \mathfrak{k}^\perp \right) = \dim X^+, \]
	which in turn implies $M$ is holonomic.
\end{proof}

\begin{corollary}\label{prop:u-holonomicity-symm}
	Suppose $X^+$ is essentially symmetric (Definition \ref{def:symm-space}), then the $D_{X^+_\CC}$-module $M$ above is holonomic.
\end{corollary}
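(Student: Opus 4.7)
The plan is to reduce immediately to the hypothesis of Theorem \ref{prop:Lagrangian-criterion-0} by observing that the Lagrangian condition on $\mathcal{O} \cap \mathfrak{h}^\perp_\CC$ only depends on $\mathfrak{h}_\CC$, which is insensitive to passage through finite coverings. First I would fix a base point $x_0 \in X^+(\R)$, set $H := \Stab_G(x_0)$, and invoke Definition \ref{def:symm-space} to produce an involution $\theta: G \to G$ such that $(G^\theta)^\circ = H^\circ$. Since taking Lie algebras ignores the component group, this yields $\mathfrak{h} = \Lie(H) = \Lie(H^\circ) = \mathfrak{g}^\theta$, and after complexification $\mathfrak{h}_\CC = \mathfrak{g}_\CC^\theta$. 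In particular the annihilator $\mathfrak{h}^\perp_\CC \subset \mathfrak{g}^*_\CC$ coincides with the one attached to the symmetric pair $(G, G^\theta)$.

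Next, Proposition \ref{prop:Lagrangian-criterion} applied to the involution $\theta$ on $G_\CC$ tells us that $\mathcal{O} \cap \mathfrak{h}^\perp_\CC$ is Lagrangian inside $\mathcal{O}$ for every nilpotent orbit $\mathcal{O} \subset \mathfrak{g}_{\mathrm{nil}, \CC}$. Together with the Cartan-involution input already built into the hypothesis of Theorem \ref{prop:Lagrangian-criterion-0} (yielding the Lagrangian property of $\mathcal{O} \cap \mathfrak{k}^\perp_\CC$), this verifies the sufficient condition of that theorem. We therefore conclude that $M = D_{X^+_\CC} u$ is holonomic. No obstacle is anticipated here, as essentially the only subtlety is checking that ``essentially symmetric'' implies symmetric at the level of Lie algebras, which is the content of Lemma \ref{prop:involution-vs-H} combined with the fact that $\mathfrak{h}$ sees only $H^\circ$.
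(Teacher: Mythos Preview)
Your proof is correct and follows exactly the paper's approach: observe that essentially symmetric implies $\mathfrak{h}_\CC = \mathfrak{g}_\CC^\theta$ for some involution $\theta$ (since Lie algebras only see the identity component), then apply Proposition \ref{prop:Lagrangian-criterion} to verify the Lagrangian hypothesis of Theorem \ref{prop:Lagrangian-criterion-0}. The paper's own proof is a two-line version of what you wrote; your added justification via $H^\circ = (G^\theta)^\circ$ and the reference to Lemma \ref{prop:involution-vs-H} simply makes explicit what the paper leaves implicit.
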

\begin{proof}
	In this case $\mathfrak{h}_\CC = \mathfrak{g}^\theta_\CC$ for some involution $\theta: G \to G$. Thus the Proposition \ref{prop:Lagrangian-criterion} can be applied.
\end{proof}

\begin{remark}
	It would be useful to extend the result above to more general $X^+ = H \backslash G$. For example, suppose that $G$ is quasisplit, $H := U$ is a maximal unipotent subgroup. Take a Borel subgroup $B \supset U$ and an opposite $B^- \supset U^-$ of $B$. Using an invariant bilinear form to identify $\mathfrak{g}$ and $\mathfrak{g}^*$, it turns out that $\mathcal{O} \cap \mathfrak{u}^\perp_\CC = \mathcal{O} \cap \mathfrak{b}^-_\CC = \mathcal{O} \cap \mathfrak{u}^-_\CC$ is Lagrangian in $\mathcal{O}$ for any nilpotent orbit $\mathcal{O}$. This is proven in \cite[Theorem 3.3.7]{CG10}. Thus the Proposition \ref{prop:Lagrangian-criterion} is still applicable here. Note that $U \backslash G$ is only quasi-affine.
\end{remark}

\section{Zeta integral: convergence and continuity}\label{sec:zeta-integral}
On any $C^\infty$ manifold $M$ there is a \emph{canonically defined} line bundle $\mathscr{L}_M$, called the \emph{density bundle}. The $C_c$-sections of $\mathscr{L}_M$ can be integrated without reference to any measure. It is deduced from an $\R_{>0}$-torsor on $M$. More generally, one has the bundle of \emph{half-densities} $\mathscr{L}^{1/2}_M$, which also comes from an $\R_{>0}$-torsor and is equipped with an isomorphism $\mathscr{L}^{1/2}_M \otimes \mathscr{L}^{1/2}_M \rightiso \mathscr{L}_M$. As is customary, we write the $C^\infty$ sections of $\mathscr{L}_M$ as $|\Omega|$ where $\Omega$ is a $C^\infty$-differential form of top degree on $M$; similarly, we write $|\Omega|^{1/2}$ for the $C^\infty$ sections of $\mathscr{L}^{1/2}_M$. If a Lie group $\Gamma$ acts on $M$, then $\mathscr{L}_M$ and $\mathscr{L}^{1/2}_M$ have natural $\Gamma$-equivariant structures. We refer to \cite[\S 3.1]{Li15} for details.

Throughout this section, fix a prehomogeneous vector space $(G, \rho, X)$ satisfying the Axiom \ref{axiom:PVS}. We have
\[ X^+ \stackrel{j}{\hookrightarrow} X \hookleftarrow \partial X \]
where $\partial X$ is a hypersurface defined by basic relative invariants $f_1, \ldots, f_r$. Write $\mathscr{L}_X = \mathscr{L}_{X(\R)}$, etc. Clearly $\mathscr{L}_X\big|_{X^+(\R)} = \mathscr{L}_{X^+}$ and $\mathscr{L}_X^{1/2}\big|_{X^+(\R)} = \mathscr{L}^{1/2}_{X^+}$. Remember that $G$ acts on the right of $X$ via $\rho$. By taking transpose, $G$ acts on the left of the dual $\check{X}$, as well as on its exterior powers.

\begin{lemma}\label{prop:density-trivialization}
	The $G(\R)$-equivariant line bundle $\mathscr{L}_{X^+}^{1/2}$ is trivializable. More precisely, choose any $\Omega \in (\topwedge \check{X}) \smallsetminus \{0\}$, there exists a relative invariant $\phi \in \R[X]$ such that $|\phi|^{-1/4} |\Omega|^{1/2}$ is a $G(\R)$-invariant, nowhere-vanishing section of $\mathscr{L}_{X^+}^{1/2}$. Consequently, $\mathscr{L}_{X^+}$ is equivariantly trivializable.
\end{lemma}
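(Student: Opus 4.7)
The plan is to begin by trivializing the density bundle $\mathscr{L}_X$ on the ambient vector space $X(\R)$ and then to correct the resulting section so as to produce a $G(\R)$-invariant trivialization over the open orbit. Fix any $\Omega\in\topwedge\check{X}\smallsetminus\{0\}$; this gives a nowhere-vanishing section $|\Omega|$ of $\mathscr{L}_X$, and by extracting a square root in the $\R_{>0}$-torsor underlying $\mathscr{L}_X^{1/2}$, a nowhere-vanishing section $|\Omega|^{1/2}$ of $\mathscr{L}_X^{1/2}$. Neither is $G(\R)$-invariant: the right translation $m_g\colon x\mapsto x\rho(g)$ is linear with constant Jacobian determinant $\det\rho(g)$, so a straightforward change-of-variables computation shows that $|\Omega|$ is semi-invariant under the equivariant structure with character $g\mapsto|\det\rho(g)|$, and consequently $|\Omega|^{1/2}$ transforms by $g\mapsto|\det\rho(g)|^{1/2}$ (the signs depend on whether one transports by $m_g$ or $m_{g^{-1}}$, but this does not affect the argument).

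The plan is to kill this character by multiplying by $|\phi|^{-1/4}$ for a suitable relative invariant $\phi$. By Axiom~\ref{axiom:PVS} the triple $(G,\rho,X)$ is regular, so Proposition~\ref{prop:det-formula} asserts $(\det\rho)^2\in X^*_\rho(G)$; combined with Proposition~\ref{prop:relative-invariants} this yields a relative invariant $\phi=c\prod_{i=1}^r f_i^{d_i}$ with $c\in\R^\times$ and eigencharacter $\omega_\phi=(\det\rho)^2$. The exponents $d_i$ are in fact nonnegative (this is built into the usual proof of Proposition~\ref{prop:det-formula} via the Hessian of a non-degenerate relative invariant), whence $\phi\in\R[X]$. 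Since $X^+(\R)=X(\R)\smallsetminus\bigcup_i\{f_i=0\}$, the function $\phi$ is nowhere vanishing on $X^+(\R)$, so $|\phi|^{-1/4}$ is a well-defined, smooth, positive function there.

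To finish, observe that $|\phi(x\rho(g))|^{-1/4}=|\omega_\phi(g)|^{-1/4}|\phi(x)|^{-1/4}=|\det\rho(g)|^{-1/2}|\phi(x)|^{-1/4}$, which is precisely the character needed to cancel the semi-invariance of $|\Omega|^{1/2}$. Hence $|\phi|^{-1/4}|\Omega|^{1/2}$ restricts to a $G(\R)$-invariant, nowhere-vanishing $C^\infty$ section of $\mathscr{L}_{X^+}^{1/2}$, giving the first assertion; an equivariant trivialization of $\mathscr{L}_{X^+}$ then follows by squaring via the canonical isomorphism $\mathscr{L}_{X^+}^{1/2}\otimes\mathscr{L}_{X^+}^{1/2}\rightiso\mathscr{L}_{X^+}$. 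The only substantive geometric input is Proposition~\ref{prop:det-formula}; the main delicate point is the convention-dependent bookkeeping, namely matching the eigencharacter $(\det\rho)^2$ to the fourth power of $|\phi|$ needed so that the half-density exponents on $|\Omega|^{1/2}$ cancel precisely.
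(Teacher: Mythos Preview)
Your proof is correct and follows essentially the same route as the paper: both use Proposition~\ref{prop:det-formula} (available because Axiom~\ref{axiom:PVS} ensures regularity) to obtain a relative invariant $\phi$ with eigencharacter $(\det\rho)^2$, then observe that $|\phi|^{-1/4}$ cancels the transformation character $|\det\rho|^{1/2}$ of $|\Omega|^{1/2}$. The paper does the density bundle first and then takes square roots, whereas you work directly with half-densities and square at the end, but this is a cosmetic difference; you also supply a little more justification (nonvanishing on $X^+$, the polynomiality of $\phi$) than the paper's terse version.
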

\begin{proof}
	Every $g \in G(\R)$ transforms $|\Omega|$ to $|\det\rho(g)| \cdot |\Omega|$. On the other hand, Proposition \ref{prop:det-formula} gives a relative invariant $\phi \in \R[X]$ whose eigencharacter equals $(\det\rho)^2$. Hence $|\phi|^{-\frac{1}{2}} |\Omega|$ trivializes $\mathscr{L}_{X^+}$ equivariantly since $\phi$ is nowhere vanishing on $X^+$. Now take square roots.
\end{proof}
As a byproduct, we obtain a $G(\R)$-invariant measure on $X^+(\R)$ defined by the density $|\phi|^{-\frac{1}{2}} |\Omega|$. On the other hand $|\Omega|$ defines a translation-invariant measure on $X(\R)$.

To the vector space $X$ we have the \emph{$\mathscr{L}_X^{1/2}$-valued Schwartz space} $\Schw(X)$, which is a nuclear Fréchet space with a continuous $G(\R)$-action. Every $\xi \in \Schw(X)$ can be written as a Schwartz section of $\mathscr{L}_X^{1/2}$, namely $\xi = \xi_0 |\Omega|^{\frac{1}{2}}$ for some ordinary Schwartz function $\xi_0$. Half-densities are especially useful when discussing Fourier transforms; see \cite[\S 6.1]{Li15}.

For any irreducible SAF representation $\pi$ of $G(\R)$, the formalism in \S\ref{sec:harmonic-analysis} gives rise to the space $\mathcal{N}_\pi(\mathscr{L}_{X^+}^{1/2})$. In view of Lemma \ref{prop:density-trivialization}, we may safely identify $\mathcal{N}_\pi(\mathscr{L}_{X^+}^{1/2})$ with $\mathcal{N}_\pi$.

Introduce the notation
\[ \Lambda_A :=  X^*_\rho(G) \dotimes{\Z} A, \quad A:\; \text{any commutative ring}. \]
Fix basic relative invariants $f_1, \ldots, f_r \in \R[X]$ with corresponding eigencharacters $\omega_1, \ldots, \omega_r \in X^*_\rho(G)$. For every $\lambda = \sum_{i=1}^r \omega_i \otimes \lambda_i \in \Lambda_\CC$, write
\[ |f|^\lambda := \prod_{i=1}^r |f_i|^{\lambda_i}: \; X^+(\R) \to \CC. \]
For $\kappa = \sum_{i=1}^r \omega_i \otimes \kappa_i \in \Lambda_\R$, we write $\Re(\lambda) \geq_X \kappa$ to indicate that $\Re(\lambda_i) \geq \kappa_i$ for each $i$; the phrase $\Re(\lambda) \gg_X 0$ is similarly interpreted.

\begin{definition}\label{def:zeta-integral}
	Consider the data
	\begin{itemize}
		\item $\pi$: an irreducible SAF representation with underlying space $V_\pi$,
		\item $\eta \in \mathcal{N}_\pi(\mathscr{L}_{X^+}^{1/2}) \simeq \mathcal{N}_\pi$,
		\item $v \in V_\pi$ and $\xi \in \Schw(X)$.
	\end{itemize}
	For $\lambda \in \Lambda_{\CC}$ with $\Re(\lambda) \gg_X 0$, define the corresponding \emph{zeta integral} as
	\[ Z_\lambda(\eta, v, \xi) := \int_{X^+(\R)} \eta(v) |f|^\lambda \xi. \]
	The integral makes no reference to measures since the integrand is a section of $\left( \mathscr{L}_{X^+}^{\frac{1}{2}} \right)^{\otimes 2} \rightiso \mathscr{L}_{X^+}$.
\end{definition}

\begin{remark}\label{rem:zeta-properties}
	Before proving the absolute convergence of the integral, several observations are in order.
	\begin{enumerate}
		\item Granting the absolute convergence for $\Re(\lambda) \gg_X 0$, we see that $Z_\lambda(\eta, v, \xi)$ is multi-linear in $\eta, v, \xi$. It is nontrivial only when $\mathcal{N}_\pi \neq 0$, that is, when $\pi$ is distinguished by some $\Stab_{G(\R)}(x_0)$, $x_0 \in X^+(\R)$. For $\pi=\text{triv}$ we recover the local version of the well-known prehomogeneous zeta integral, whose convergence and meromorphic continuation has been established; see \cite{Sa89,Ki03}.
		\item The basic relative invariants are unique up to scalar. This choice has no effect on the analytic behavior of $Z_\lambda(\eta, v, \xi)$.
		\item Write $|\omega|^\lambda := \prod_{i=1}^r |\omega_i|^{\lambda_i}$, then $v \mapsto \eta(v) |f|^\lambda$ is an element of $\mathcal{N}_{\pi \otimes |\omega|^\lambda}$. Assuming convergence, $(v,\xi) \mapsto Z_\lambda(\eta, v, \xi)$ is then a $G(\R)$-invariant bilinear form $(\pi \otimes |\omega|^\lambda) \times \Schw(X) \to \CC$. Remember that $\Schw(X)$ consists of $\mathscr{L}^{1/2}_X$-valued Schwartz functions, and the $G(\R)$-action must be interpreted accordingly. 
		\item To get a familiar integral of $\CC$-valued functions, one may choose $\Omega \in (\topwedge \check{X}) \smallsetminus \{0\}$ and invoke Lemma \ref{prop:density-trivialization} to write
			\begin{align*}
				\eta(v) & = \eta(v)_0 \cdot |\phi|^{-1/4} |\Omega|^{1/2}, \quad \eta(v)_0 \in C^\infty(X^+(\R)), \\
				\xi & = \xi_0 \cdot |\Omega|^{1/2}, \quad \xi_0: \text{ordinary Schwartz function}.
			\end{align*}
			It has been observed that $\dd\mu := |\phi|^{-1/2} |\Omega|$ is an invariant Haar measure on $X^+(\R)$. As $\phi$ is a relative invariant, $|\phi|^{1/4} = |f|^{\lambda_0}$ for some $\lambda_0 \in \frac{1}{4}\Lambda_\Z$. All in all,
			\begin{equation}\label{eqn:ordinary-zeta} \begin{aligned}
				Z_\lambda(\eta,v,\xi) & = \int_{X^+(\R)} \eta(v)_0 |f|^\lambda |\phi|^{1/4} \xi_0 \cdot |\phi|^{-1/2} |\Omega| \\
				& = \int_{X^+(\R)} \eta(v)_0 |f|^{\lambda + \lambda_0} \xi_0\; \dd\mu.
			\end{aligned}\end{equation}
			This yields a more familiar zeta integral, albeit with a shift $\lambda_0 \in \Lambda_\Q$.
		\item Alternatively, there is a ``flat'' version
			\begin{equation}\label{eqn:flat-zeta}
				Z_\lambda(\eta,v,\xi) = \int_{X(\R)} \eta(v)_0 |f|^{\lambda - \lambda_0} \xi_0 \; \dd\nu
			\end{equation}
			where $\dd\nu$ is the translation-invariant measure on $X(\R)$ determined by $|\Omega|$.
	\end{enumerate}
\end{remark}

For topological vector spaces $V$, $W$, denote by $\mathrm{Bil}(V, W)$ the vector space of jointly continuous bilinear forms $V \times W \to \CC$. It is canonically isomorphic to $(V \otimes W)^\vee$ where $V \otimes W$ comes with the $\pi$-topology \cite[Proposition 43.4]{Tr67}. In the circumstances below $V$ and $W$ will both be nuclear spaces, so one may write $(V \hat{\otimes} W)^\vee$ instead.

\begin{theorem}\label{prop:zeta-convergence}
	There is a $\kappa \in \Lambda_\R$, depending solely on $\pi$, such that the integral in Definition \ref{def:zeta-integral} converges when $\Re(\lambda) \geq_X \kappa$. Furthermore, inside the region of convergence
	\begin{itemize}
		\item $Z_\lambda(\eta, v, \xi)$ is jointly continuous in $(v, \xi)$.
		\item it defines a holomorphic function in $\lambda$ with values in $\mathrm{Bil}(V_\pi, \Schw(X)) \simeq (V_\pi \hat{\otimes} \Schw(X))^\vee$.
		\item For given $(v,\xi)$, it is bounded in vertical strips as a function in $\lambda$.
	\end{itemize}
\end{theorem}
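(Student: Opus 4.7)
The plan is to reduce to the scalar form \eqref{eqn:flat-zeta} and dominate the integrand via the Nash estimate of Theorem \ref{prop:asymptotic}, combined with a semi-algebraic control of its blow-up along $\partial X$. Fix $\Omega$, $\phi$, and $\lambda_0 \in \Lambda_\Q$ as in Remark \ref{rem:zeta-properties}.

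First I would apply Theorem \ref{prop:asymptotic} (with the $G(\R)$-equivariant trivialization of $\mathscr{L}_{X^+}^{1/2}$ furnished by Lemma \ref{prop:density-trivialization}) to obtain, for each $\eta \in \mathcal{N}_\pi$, a continuous semi-norm $q_\eta$ on $V_\pi$ and a Nash function $p_\eta \geq 0$ on $X^+(\R)$ with $|\eta(v)_0(x)| \leq q_\eta(v)\, p_\eta(x)$. Since $\mathcal{N}_\pi$ is finite-dimensional by Theorem \ref{prop:N-fd}, a basis-and-sum argument produces a single pair $(q,p)$ depending only on $\pi$ that serves for every $\eta$.

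The decisive step is to quantify the blow-up of $p$ along $\partial X(\R) = \bigcup_{i=1}^r \{f_i = 0\}$. Since $p$ is non-negative and semi-algebraic on $X^+(\R)$ and $\partial X(\R)$ is a real algebraic set, a \L{}ojasiewicz-type inequality together with the polynomial growth at infinity of semi-algebraic functions (Tarski--Seidenberg) yields exponents $\kappa_1, \ldots, \kappa_r \in \R_{\geq 0}$ and a non-negative continuous semi-algebraic function $p_1$ on $X(\R)$ of polynomial growth such that
\[ p(x) \leq p_1(x) \prod_{i=1}^r |f_i(x)|^{-\kappa_i}, \qquad x \in X^+(\R). \]
Setting $\kappa := \lambda_0 + \sum_i \kappa_i\, \omega_i \in \Lambda_\R$, whenever $\Re(\lambda) \geq_X \kappa$ the integrand of \eqref{eqn:flat-zeta} is pointwise dominated by $q(v)\, p_1(x)\, |f(x)|^{\Re(\lambda)-\kappa}\, |\xi_0(x)|$, which is the product of a function of polynomial growth with the Schwartz function $\xi_0$, hence absolutely integrable against $\dd\nu$. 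The resulting estimate $|Z_\lambda(\eta,v,\xi)| \leq q(v) \cdot N_{\Re(\lambda)}(\xi)$, in which $N_{\Re(\lambda)}$ is a continuous semi-norm on $\Schw(X)$ depending only on $\Re(\lambda)$ and uniform for $\Re(\lambda)$ in any compact subset of the convergence region, simultaneously delivers absolute convergence, joint continuity in $(v,\xi)$, and boundedness in vertical strips. Holomorphy in $\lambda$ then follows by a standard Morera/Fubini argument, since $\lambda \mapsto |f(x)|^\lambda$ is entire for each $x \in X^+(\R)$ and the uniform $L^1$-domination permits the exchange of integration with any triangular contour integral in the convergence region.

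The main obstacle is the \L{}ojasiewicz-type bound of the second step, namely the quantification of the blow-up rate of the Nash function $p$ as $x$ approaches $\partial X$; once that semi-algebraic estimate is secured, the remaining steps are essentially formal.
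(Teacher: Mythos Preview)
Your proposal is correct and follows essentially the same route as the paper: reduce to the flat form \eqref{eqn:flat-zeta}, invoke Theorem \ref{prop:asymptotic} for the Nash bound on $\eta(v)_0$, and then control the blow-up of the Nash function $p$ along $\partial X$ by a factor $|f|^{-\mu}$ times something of polynomial growth on all of $X(\R)$. The paper packages your ``\L{}ojasiewicz-type'' step as a separate Lemma (Lemma \ref{prop:Nash-bound}), proved by citing two ready-made facts from real algebraic geometry \cite[Propositions 2.6.8 and 2.6.2]{BCR98}, which give exactly the inequality $|f|^\mu |p| \leq p_1$ with $\mu \in \Lambda_\Z$ and $p_1$ polynomial; the remaining steps (separate $\Rightarrow$ joint continuity via Fr\'echet, holomorphy, vertical-strip bounds) are handled as you indicate.
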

There is no need to discuss the continuity in $\eta$ since $\dim\mathcal{N}_\pi < \infty$. Also recall \cite[\S 41]{Tr67} for various notions of continuity for bilinear maps.
\begin{proof}
	Use the expression \eqref{eqn:flat-zeta}. Theorem \ref{prop:asymptotic} furnishes a continuous semi-norm $q: V_\pi \to \R_{\geq 0}$ together with a Nash function $p$ on $X^+(\R)$, both independent of $v$, such that $\left| \eta(v)_0 \right| \leq q(v) p$. We claim that
	\begin{gather}\label{eqn:ext-Nash-estimate}
		\exists \mu \in \Lambda_\R, \; \exists p_1: \text{Nash function on } X(\R),  \quad |f|^\mu p \leq p_1, \quad .
	\end{gather}
	Granting this property, we have
	\[ |f|^{\Re(\lambda) - \lambda_0} p = |f|^{\Re(\lambda) - \lambda_0 - \mu} |f|^\mu p \leq |f|^{\Re(\lambda) - \lambda_0 - \mu} p_1. \]
	Also observe that $\theta \geq_X 0$ implies that $|f|^\theta$ is bounded by a Nash function on $X(\R)$: indeed, $|f|^\theta \leq \prod_{i=1}^r (1 + |f_i|)^{\theta_i} \leq \prod_{i=1}^r (1 + |f_i|)^{\lceil \theta_i \rceil}$ in this case.
	
	Since $\xi_0$ is a Schwartz function, its product with any Nash function on $X(\R)$ remains bounded \cite[\S 4.1]{AG08}. We deduce that
	\begin{equation}\label{eqn:zeta-convergence-aux}
		\bigg| \eta(v)_0 |f|^{\lambda - \lambda_0} \xi_0 \bigg| \leq q(v) |f|^{\Re(\lambda) - \lambda_0-\mu} p_1 |\xi_0|
	\end{equation}
	is integrable over $X(\R)$ relative to $\dd\nu$ when $\Re(\lambda) \geq_X \lambda_0 + \mu$. The continuity in $\xi$ or $\xi_0$ is easy, whilst the continuity in $v \in V_\pi$ stems from the presence of $q(v)$. All in all, $Z_\lambda(\eta, v, \xi)$ is separately continuous in $v$ and $\xi$. Since $V_\pi$ and $\Schw(X)$ are both Fréchet, joint continuity follows (see \cite[Corollary to Theorem 34.1]{Tr67}). Also, for fixed $(v,\xi)$ it is routine to see the holomorphy of $\lambda \mapsto Z_\lambda(\eta,v,\xi)$ in the range of converge. This amounts to the required holomorphy.
	
	The boundedness on vertical strips for every $(v,\xi)$ is a consequence of \eqref{eqn:zeta-convergence-aux}.

	Finally, to prove \eqref{eqn:ext-Nash-estimate} we appeal to the following Lemma.
\end{proof}

\begin{lemma}\label{prop:Nash-bound}
	Let $p$ be a Nash function on $X^+(\R)$. Then there exist $\mu \in \Lambda_\Z$ with $\mu \geq_X 0$ and a polynomial function $p_1 \geq  0$ on $X(\R)$ satisfying
	\[ |f|^\mu |p| \leq p_1. \]
\end{lemma}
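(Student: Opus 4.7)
The plan reduces to establishing the Łojasiewicz-type estimate
\[
|p(x)| \leq C\,(1 + \|x\|^2)^N\,|f_\natural(x)|^{-M}, \quad x \in X^+(\R),
\]
where $f_\natural := f_1 \cdots f_r \in \R[X]$ cuts out $\partial X$, $\|\cdot\|$ is any Euclidean norm on $X(\R) \simeq \R^n$, and $C > 0$, $M, N \in \Z_{\geq 0}$ depend only on $p$. Granting this, set $\mu := M \sum_{i=1}^r \omega_i \in \Lambda_\Z$, so that $|f|^\mu = |f_\natural|^M$ and $\mu \geq_X 0$, and let $p_1(x) := C (1 + \|x\|^2)^N$, which is a non-negative polynomial function on $X(\R)$. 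The desired inequality $|f|^\mu |p| \leq p_1$ then follows at once.

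For the estimate itself, I would argue in two stages, exploiting that $|p|$ is a continuous semi-algebraic function on the open semi-algebraic set $X^+(\R) = \{f_\natural \neq 0\}$. \emph{Near the boundary $\{f_\natural = 0\}$}: the local Łojasiewicz inequality (see \cite[\S 2.6]{BCR98}) provides, for each compact $K \subset X(\R)$, integers $M_K \geq 0$ and constants $C_K > 0$ with $|p(x)|\,|f_\natural(x)|^{M_K} \leq C_K$ on $K \cap X^+(\R)$; equivalently $|p|\,f_\natural^{2M_K}$ extends continuously by zero across $\{f_\natural = 0\}$ on $K$. \emph{At infinity}: every continuous semi-algebraic function on $\R^n$ is dominated by some polynomial in $\|x\|$, a classical consequence of Tarski--Seidenberg applied to its graph.

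To glue these local statements into a uniform global estimate, I would compactify: a stereographic embedding identifies $X(\R) \simeq \R^n$ with an open semi-algebraic subset of the sphere $S^n$, and the Łojasiewicz inequality on the compact $S^n$---with forbidden locus the pullback of $\{f_\natural = 0\}$ together with the point at infinity---furnishes a single exponent $M$ that works everywhere. The polynomial growth bound at infinity then supplies the $(1 + \|x\|^2)^N$ factor. The main technical obstacle is exactly this uniformization: promoting the locally-uniform Łojasiewicz exponents $M_K$ to one global $M$; all other pieces are routine. An equivalent route, sidestepping compactification, is to invoke a global rational version of the Łojasiewicz inequality producing the two factors $|f_\natural|^{-M}$ and $(1+\|x\|^2)^N$ simultaneously.
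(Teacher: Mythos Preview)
Your proposal is correct and follows essentially the same two-step structure as the paper's proof: first multiply $p$ by a suitable $|f|^\mu$ so that the product extends to a continuous semi-algebraic function across $\partial X$, then bound any continuous semi-algebraic function on $X(\R)$ by a non-negative polynomial. The paper simply cites \cite[Proposition~2.6.8]{BCR98} and \cite[Proposition~2.6.2]{BCR98} for these two steps, which already give a \emph{global} exponent and thereby bypass your compactification argument for promoting the local exponents $M_K$ to a single $M$.
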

\begin{proof}
	Recall that $X^+$ is the complement of the hypersurface $f_1 \cdots f_r = 0$. Now use the following facts from real algebraic geometry.
	\begin{compactitem}
		\item By \cite[Proposition 2.6.8]{BCR98}, there exists $\mu \in \Lambda_\Z$ such that $\mu \geq_X 0$ and $f^\mu p$ extends to a continuous semi-algebraic function on $X(\R)$.
		\item By \cite[Proposition 2.6.2]{BCR98}, every continuous semi-algebraic function on $X(\R)$ is bounded by a positive polynomial function.
	\end{compactitem}
\end{proof}

\begin{example}
	Consider the Godement--Jacquet case of Example \ref{eg:Godement-Jacquet}, $G = D^\times \times D^\times$, where $D$ is a central simple $\R$-algebra with dimension $n^2$ and reduced norm $\Nrd$. Identify $X^*_\rho(G)$ with $\Z$ by mapping $1$ to $(g_1, g_2) \mapsto \Nrd(g_2)^{-1} \Nrd(g_1)$, which is the eigencharacter of the basic relative invariant $f = \Nrd$. In the notation of Lemma \ref{prop:density-trivialization},
	\[ \det\rho(g_1, g_2) = \Nrd(g_2)^{-n} \Nrd(g_1)^n , \quad \phi = \Nrd^{2n}, \quad |\phi|^{\frac{1}{4}} = |\Nrd|^{\frac{n}{2}}. \]
	Hence $|\phi|^{1/4} = |f|^{n/2}$, i.e.\ the $\lambda_0 \in \Lambda_\Q$ in Remark \ref{rem:zeta-properties} is $\frac{n}{2}$. On the other hand, $\dim \mathcal{N}_\pi \leq 1$; equality holds if and only if $\pi \simeq \tau \hat{\boxtimes} \check{\tau}$, in which case $\mathcal{N}_\pi$ is spanned by the matrix coefficient map $v \otimes \check{v} \mapsto \lrangle{\check{v}, \tau(\cdot)v}$.
	
	For $\lambda \in \CC$, $\Re(\lambda) \gg 0$, the zeta integral in \eqref{eqn:ordinary-zeta} becomes
	\[ \int_{D^\times} \lrangle{\check{v}, \tau(x)v} |\Nrd(x)|^{\frac{n}{2} + \lambda} \xi_0(x) \dd\mu \]
	where $\xi_0$ is an ordinary Schwartz function on $D$ and $\mu$ is a Haar measure on $D^\times$. As is well-known, it points to the standard local $L$-factor $L(\lambda + \frac{1}{2}, \tau)$. By setting $\lambda=0$ (or equivalently, by replacing $\tau$ by $\tau \otimes |\Nrd|^\lambda$), we get the $L$-factor evaluated at its axis of symmetry. Working with half-densities thus leads to a natural normalization of zeta integrals. This has been observed in \cite{Li15}.
\end{example}

\section{Action by Schwartz space}\label{sec:Schwartz-action}
The constructions below will be crucial for \S\ref{sec:meromorphy}. In what follows, we fix a connected reductive group $G$ and work with $\R$-varieties.

Consider a smooth affine variety $X$ in general. The space of scalar-valued Schwartz functions $\Schw(X)$ is defined in \cite{AG08}. It is a Fréchet space topologized by the semi-norms $\xi \mapsto \sup_{X(\R)} |D \xi|$, where $D$ ranges over the Nash differential operator on $X(\R)$ (see \textit{loc.\  cit.}) The finiteness of these semi-norms captures the idea of ``rapid decay''; when $X \simeq \R^n$, this coincides with the classical version. One can also define Schwartz functions valued in some Nash vector bundle $\mathscr{L}$ over $X(\R)$ or over some connected components thereof. For us, the only relevant non-scalar cases are
\begin{compactitem}
	\item $X = G$ and $\mathscr{L} = \mathscr{L}_G$ is the density line bundle. The bundle is $G$-equivariantly trivializable by choosing a Haar measure;
	\item $(G, \rho, X)$ is as in Axiom \ref{axiom:PVS} and $\mathscr{L} = \mathscr{L}_X^{1/2}$. The bundle is $G$-equivariantly trivializable by Lemma \ref{prop:density-trivialization}.
\end{compactitem}

Write $\Schw(G)$ for the $\mathscr{L}_G$-valued Schwartz space. It forms a (non-unital) Fréchet convolution algebra $\Schw(G)$, the \emph{Schwartz algebra} of $G(\R)$; see \cite{dC91}. The automorphism $g \mapsto g^{-1}$ of $G$ induces a continuous anti-involution
\[ \Schw(G) \to \Schw(G), \quad \Xi \mapsto \check{\Xi}. \]
The Schwartz algebra acts continuously on SAF representations of $G(\R)$ by the vector-valued integral $\pi(\Xi)v = \int_{G(\R)} \Xi(g) \pi(g)v$ for all $\Xi \in \Schw(G)$, see \cite[\S 2.5]{BK14}. Given any irreducible SAF representation $\pi$ and a maximal compact subgroup $K \subset G(\R)$, it is known that
\[ V_\pi = \pi(\Schw(G)) V_\pi^{K\text{-fini}}; \]
see for example \cite[p.46]{BK14}.

Suppose $X$ is a smooth affine $G$-variety. Then $\Schw(G)$ acts upon the scalar-valued Schwartz space $\Schw(X)$. This is certainly well-known. Due to the lack of reference, we supply a proof which is inspired by \cite{dC91} for the case $G=X$.

\begin{lemma}\label{prop:Schwartz-action-X}
	Let $X$ be a smooth affine $G$-variety. Then $\Schw(G)$ acts on the Schwartz space $\Schw(X)$ of $X(\R)$ defined in \cite{AG08} via
	\[ (\Xi \xi)(x) = \int_{G(\R)} \Xi(g) \xi(xg), \quad \Xi \in \Schw(G), \; \xi \in \Schw(X) \]
	and the action map is jointly continuous.
\end{lemma}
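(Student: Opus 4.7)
The plan is to verify three points: well-definedness of $(\Xi\xi)(x)$ for each $x$ together with smoothness of the result, Schwartz estimates of bilinear type, and joint continuity.

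\emph{Convergence and smoothness.} For fixed $x \in X(\R)$ the function $g \mapsto \xi(xg)$ is continuous and bounded by the continuous semi-norm $\sup_{X(\R)}|\xi|$ of $\xi \in \Schw(X)$. Since $\Xi \in \Schw(G)$ is a rapidly decaying density on $G(\R)$, hence integrable, the integral converges absolutely with $|(\Xi\xi)(x)| \leq \|\Xi\|_{L^1}\cdot \sup|\xi|$. Smoothness on $X(\R)$ and the legitimacy of differentiating under the integral sign then follow from the rapid decay of $\Xi$ (together with its derivatives) and the observation that derivatives of $x \mapsto \xi(xg)$ grow at most polynomially in $g$ on each compact set in $x$, since the action map $a: X \times G \to X$, $(x,g) \mapsto xg$, is algebraic.

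\emph{Schwartz estimates (main step).} Fix a Nash differential operator $D$ on $X(\R)$; we want to bound $\sup_x |D(\Xi\xi)(x)|$ by a product of continuous semi-norms of $\Xi$ and $\xi$. Writing $D$ locally as $\sum_\alpha a_\alpha(x)\partial_x^\alpha$ with Nash coefficients $a_\alpha$, and applying the chain rule through the algebraic morphism $a$, one obtains an identity of the form
\begin{equation*}
    D_x\!\left[\xi(xg)\right] \;=\; \sum_\beta b_\beta(x,g)\,(\partial^\beta\xi)(xg),
\end{equation*}
where the $b_\beta$ are Nash functions on $X(\R) \times G(\R)$ of polynomial growth in $g$. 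By Lemma \ref{prop:Nash-bound} applied on the smooth affine variety $X \times G$, each $|b_\beta(x,g)|$ is dominated by a polynomial $P_\beta(x,g)$; similarly, every $|(\partial^\beta\xi)(y)|$ is dominated by an arbitrarily prescribed inverse polynomial in $y$ times a suitable continuous semi-norm $q'_\beta(\xi)$. Substituting $y = xg$ and multiplying by any prescribed polynomial weight in $x$ in order to test the Schwartz semi-norm $\sup_x |D(\Xi\xi)(x)|$, the integrand splits into a factor $|\Xi(g)|\cdot Q(g)$, which is absorbed by a continuous semi-norm $q(\Xi)$ since rapid decay of $\Xi$ swallows any polynomial in $g$, times a factor bounded uniformly in $x$ by $q'_\beta(\xi)$. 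We obtain estimates of the form
\begin{equation*}
    \sup_x |D(\Xi\xi)(x)| \;\leq\; C_D\, q(\Xi)\, q'(\xi), \qquad \Xi \in \Schw(G),\ \xi \in \Schw(X),
\end{equation*}
which simultaneously shows $\Xi\xi \in \Schw(X)$ and establishes separate continuity of the bilinear map $(\Xi,\xi)\mapsto \Xi\xi$ in each variable.

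\emph{Joint continuity and the main obstacle.} Since $\Schw(G)$ and $\Schw(X)$ are Fréchet, separate continuity upgrades to joint continuity by \cite[Corollary to Theorem 34.1]{Tr67}, exactly as in the proof of Theorem \ref{prop:zeta-convergence}. The genuine difficulty is the middle step: after commuting a Nash differential operator $D$ past right-translation by $g$, one must show that the resulting $g$-dependent coefficients admit polynomial-type estimates uniform on $X(\R) \times G(\R)$, so that the rapid decay of $\Xi$ in $g$ and the Nash-function decay built into the Schwartz semi-norms of $\xi$ together absorb all unbounded factors. This requires the algebraicity of the action map, the Tarski--Seidenberg principle, and Lemma \ref{prop:Nash-bound}, and parallels the strategy used in the $X = G$ case in \cite{dC91}.
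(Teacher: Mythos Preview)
Your strategy is sound and reaches the same endpoint as the paper, but the route differs and a couple of steps are imprecise as written.

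The paper first reduces to \emph{algebraic} differential operators via \cite[Corollary 4.1.3]{AG08}, and then exploits the $G$-linearization of the sheaf $\mathscr{D}_X$: for the action map $a: X\times G\to X$ one has $\varphi(a^* D)=\sum_i a_i\otimes b_i$ with $a_i\in D_X$ and $b_i\in \R[G]$. This yields the clean global identity
\[
(D\cdot \Xi\xi)(x)=\sum_i \int_{G(\R)} b_i(g^{-1})\,\Xi(g)\,(a_i\xi)(xg),
\]
and the first-step estimate $|\Xi'\xi'(x)|\le \|\Xi'\|_{L^1}\sup|\xi'|$ applied to $\Xi'=\check b_i\Xi\in\Schw(G)$ and $\xi'=a_i\xi\in\Schw(X)$ finishes immediately. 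Your approach instead keeps $D$ Nash, passes derivatives through the algebraic action by a chain-rule computation, and controls the resulting $g$-dependent coefficients via semi-algebraic growth bounds. This is workable, but note two points. First, Lemma~\ref{prop:Nash-bound} is stated only for the open orbit $X^+$ inside a prehomogeneous vector space and produces a bound involving the specific relative invariants $f_i$; it does not literally apply to $X\times G$. What you actually need is the general fact from \cite[Proposition 2.6.2]{BCR98} that a continuous semi-algebraic function on an affine real variety is bounded by a polynomial. Second, writing $D=\sum_\alpha a_\alpha\partial_x^\alpha$ ``locally'' and speaking of $(\partial^\beta\xi)(xg)$ presupposes coordinates that a general smooth affine $X$ does not have globally; you should instead work with a finite set of global algebraic vector fields generating $\mathscr{T}_X$ (which exist since $X$ is affine), or---more simply---adopt the paper's reduction to algebraic $D$ and the $G$-linearization of $\mathscr{D}_X$, which makes the ``commute $D$ past translation by $g$'' step a one-line algebraic identity rather than an analytic estimate.
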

\begin{proof}
	We have to bound $D \cdot \Xi\xi$ for every Nash differential operator $D$ on $X(\R)$. By \cite[Corollary 4.1.3]{AG08}, it suffices to treat the case of algebraic differential operators $D$.

	The first step is to notice $|\Xi\xi(x)| \leq \|\Xi\|_{L^1(G(\R))} \cdot \sup_{X(\R)}|\xi|$ for all $x$, and note that $\|\cdot\|_{L^1(G(\R))}$ is a continuous semi-norm of $\Schw(G)$.

	Write $X \xleftarrow{\text{pr}_1} X \times G \xrightarrow{a} X$ for the projection and action morphisms. The sheaf of algebraic differential operators $\mathscr{D}_X$ is $G$-equivariant, namely we are given a $G$-linearization $\varphi: a^*(\mathscr{D}_X) \rightiso \text{pr}_1^*(\mathscr{D}_X)$ compatibly with $a^* \mathscr{O}_X \rightiso \text{pr}_1^* \mathscr{O}_X$. Now we may write $\varphi(a^* D) = \sum_{i=1}^m a_i \otimes b_i$, where $a_i$ are algebraic differential operators on $X$, and $b_i$ are regular functions on $G$. Use $D(x) = D(xgg^{-1})$ to deduce
	\begin{align*}
		\left| (D \cdot \Xi\xi)(x) \right| & = \left| \sum_{i=1}^m \int_{g \in G(\R)} b_i(g^{-1}) \Xi(g) a_i(xg) \xi(xg) \right| \\
		& \leq  \sum_{i=1}^m \sup_{X(\R)}\left| \check{b}_i \Xi \cdot a_i \xi \right| < +\infty \qquad \because\text{the first step},
	\end{align*}
	by noting that $\check{b}_i \Xi \in \Schw(G)$ and $a_i \xi \in \Schw(X)$. Since multiplication by $a_i$ (resp. $\check{b}_i$) is continuous on the Schwartz space, that estimate also implies the separate continuity of $(\Xi, \xi) \mapsto \Xi\xi$, thus the joint continuity since both spaces are Fréchet.
\end{proof}

Now we revert to the setting of \S\ref{sec:zeta-integral}. Thus $\Schw(X)$ is $\mathscr{L}_X^{1/2}$-valued, but this matter can be trivialized. The estimates from Theorem \ref{prop:asymptotic} show that $\Schw(G)$ can act on $u = \eta(v)$ for $v \in V_\pi$ (cf. \cite[11.1]{BK14}). More generally $\Schw(G)$ acts on $\eta_\lambda(v)$, where
\begin{align*}
	\eta_\lambda: \pi \otimes |\omega|^\lambda & \longrightarrow C^\infty(X^+(\R)) \\
	v & \longrightarrow |f|^\lambda \eta(v)
\end{align*}
is seen to be a continuous intertwining operator. So we have $\eta_\lambda(\pi(\Xi) v) = \Xi \eta_\lambda(v)$ whenever $\Xi \in \Schw(G)$ and $v \in V_\pi$.

\begin{lemma}\label{prop:Schwartz-transpose}
	In the range of convergence for zeta integrals, we have
	\[ Z_\lambda(\eta, \pi(\Xi)v, \xi) = Z_\lambda(\eta, v, \check{\Xi}\xi) \]
	for all $\Xi \in \Schw(G)$, $v \in V_\pi$ and $\xi \in \Schw(X)$.
\end{lemma}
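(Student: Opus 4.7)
The plan is to exploit the pointwise identity $\eta_\lambda(\pi(\Xi)v)(x) = \int_{G(\R)} \Xi(g)\,\eta_\lambda(v)(xg)\,dg$ recorded just before the lemma, substitute it into the definition of $Z_\lambda(\eta,\pi(\Xi)v,\xi)$, and interchange the order of integration by Fubini. A change of variables $y=xg$ on the $X^+$-factor combined with unimodularity of the reductive group $G(\R)$ will then convert the inner $G$-integral into the convolution $(\check{\Xi}\xi)(y)$ supplied by Lemma \ref{prop:Schwartz-action-X}, producing the desired equality with $Z_\lambda(\eta,v,\check{\Xi}\xi)$.

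The main step will be justifying Fubini, which requires an integrable dominant on $X^+(\R) \times G(\R)$ for $\Xi(g)\,\eta_\lambda(v)(xg)\,\xi(x)$. I would combine the $v$-uniform asymptotic bound $|\eta(v)(y)| \leq q(v)\,p(y)$ from Theorem \ref{prop:asymptotic} with the Nash-to-polynomial comparison $|f|^\mu p \leq p_1$ of Lemma \ref{prop:Nash-bound}, obtaining
\[ |\eta_\lambda(v)(y)| \leq q(v)\,p_1(y)\,|f(y)|^{\Re\lambda-\mu}. \]
After performing $y=xg$ via the $G(\R)$-invariant measure on $X^+(\R)$ furnished by Lemma \ref{prop:density-trivialization}, the double integrand is dominated by $q(v)\,|\Xi(g)|\,p_1(y)\,|f(y)|^{\Re\lambda-\mu}\,|\xi(yg^{-1})|$. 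For $\Re\lambda$ sufficiently deep in the region of convergence, the inner $y$-integral should converge by the same argument as in the proof of Theorem \ref{prop:zeta-convergence} (applied with $\eta$ replaced by $p_1$), and grow at worst polynomially in $g$ through the translate $\xi(\cdot g^{-1})$; the outer integral against the Schwartz density $\Xi$ will then converge. Once Fubini applies, the substitution $g \mapsto g^{-1}$ in the Haar integral (legitimate by unimodularity of $G(\R)$) turns $\Xi(g)\,\xi(yg^{-1})$ into $\check{\Xi}(g)\,\xi(yg)$, and the inner $g$-integral collapses via Lemma \ref{prop:Schwartz-action-X} into $(\check{\Xi}\xi)(y)$.

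The principal obstacle will be assembling the integrable dominant: piecing together the Nash bound on $\eta(v)$, the factor $|f|^{\Re\lambda}$, the translate $\xi(yg^{-1})$ and the Schwartz decay of $\Xi$ into a single estimate with correct behaviour jointly in $y$ and $g$ takes some care, particularly because the Nash bound lives on $X^+(\R)$ whereas $\xi$ is defined on all of $X(\R)$ and the translate $\xi(\cdot\,g^{-1})$ drifts off any fixed compact set as $g$ varies. Once this dominant is in hand, the change of variables and the unimodularity step are routine. The identity will first be established on a shifted subcone of the region of convergence, but since both sides are holomorphic in $\lambda$ and jointly continuous in $(v,\xi)$ by Theorem \ref{prop:zeta-convergence}, it will extend to the full region of convergence by the principle of analytic continuation.
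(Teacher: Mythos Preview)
Your overall strategy---unfold $\eta_\lambda(\pi(\Xi)v)$, apply Fubini, then rearrange---is exactly the paper's. The difference lies entirely in how the Fubini dominant is produced, and here the paper takes a shorter path that dissolves precisely the ``principal obstacle'' you flag.

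Rather than change variables $y=xg$ and then struggle with the drifting translate $\xi(yg^{-1})$, the paper stays in the original $(x,g)$-coordinates using the flat expression \eqref{eqn:flat-zeta}. The integrand to control is then
\[
\Xi(g)\,\eta(v)_0(xg)\,|f(xg)|^{\lambda-\lambda_0}\,|\det\rho(g)|\,\xi_0(x),
\]
where the Jacobian $|\det\rho(g)|$ arises from the linear action on $X$. The estimate you already have, $|\eta(v)_0|\cdot|f|^{\Re\lambda-\lambda_0}\le q$ for a positive polynomial $q$ on $X(\R)$ (via Theorem \ref{prop:asymptotic} and Lemma \ref{prop:Nash-bound}), is applied at the point $xg$. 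The decisive observation is purely algebraic: since $q\in\R[X]$ and the action map $X\times G\to X$ is a morphism of varieties, one can write
\[
q(xg)=\sum_{i=1}^m a_i(x)\,b_i(g),\qquad a_i\in\R[X],\; b_i\in\R[G].
\]
This separates the variables completely: the dominant becomes a finite sum of products $\bigl(|\Xi(g)|\,|\det\rho(g)|\,|b_i(g)|\bigr)\cdot\bigl(|a_i(x)\,\xi_0(x)|\bigr)$, and each factor is integrable on its own because $\Xi$ and $\xi_0$ are Schwartz while $b_i$, $|\det\rho|$, $a_i$ are regular. No estimate on how the $y$-integral depends on $g$ is ever needed.

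Your route is not wrong---if you undo your substitution $y=xg$ you land on exactly this computation, and the relative-invariance $|f(xg)|=|\omega(g)|\,|f(x)|$ together with the same expansion of $p_1(xg)$ would give the polynomial-in-$g$ growth you are hoping for. But performing the change of variables and then effectively inverting it to get the bound is a detour; the paper's device of separating the polynomial $q(xg)$ directly is the missing idea that makes the estimate immediate.
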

\begin{proof}
	Express the zeta integrals using \eqref{eqn:flat-zeta}.	By Fubini's theorem, it suffices to show the integrability of
	\[ \Xi(g) \eta(v)_0(xg) |f(xg)|^{\lambda - \lambda_0} |\det\rho(g)| \xi_0(x) \]
	over $(g,x) \in G(\R) \times X(\R)$. Here we use a translation-invariant measure on $X(\R)$; the factor $|\det\rho(g)|$ comes from the Jacobian of $g$ acting on $X$.

	In the proof of Theorem \ref{prop:zeta-convergence} we saw $\eta(v)_0 |f|^{\lambda-\lambda_0}$ can be bounded by a positive Nash function on $X(\R)$, and that function can in turn be bounded by a positive polynomial $q$ on $X$ (see Lemma \ref{prop:Nash-bound}). By algebraicity, $q(xg) = \sum_{i=1}^m a_i(x) b_i(g)$ for some $a_i \in \R[X]$, $b_i \in \R[G]$. It remains to show that $\Xi(g)|\det\rho(g)| b_i(g)$ and $\xi_0(x) a_i(x)$ are integrable over $G(\R)$ and $X(\R)$ for $i=1, \ldots, m$, respectively. This holds true since $\Xi$ and $\xi_0$ are Schwartz functions.
\end{proof}

\section{Proof of meromorphic continuation}\label{sec:meromorphy}
Retain the notation from \S\ref{sec:zeta-integral}. Fix an irreducible SAF representation $\pi$ of $G(\R)$ and let $u := \eta(v) \in C^\infty(X^+)$ for $v \in V_\pi$ and $\eta \in \mathcal{N}_\pi$. For studying meromorphic continuation, it is safe to assume $f_i \geq 0$ upon replacing $f_i$ by $f_i^2$ and drop the $|\cdot|$ from \S\ref{sec:zeta-integral}. Put
\[ f_\natural := f_1 \cdots f_r \]
which yields a non-negative function on $X(\R)$, and $\{ f_\natural = 0\} = \partial X$.

\begin{lemma}\label{prop:u-estimate}
	Fix a basis $w_1, \ldots, w_l$ of $X$ and the corresponding vector fields $D_1, \ldots, D_l$ on $X$. For each $(i_1, \ldots, i_l) \in \Z_{\geq 0}^l$, there exists $k \in \Z$ such that
	\[ \forall x \in X^+(\R), \quad \left| D_1^{i_1} \cdots D_l^{i_l} u\right| \leq p(x) \cdot f_\natural(x)^{-k} \]
	for some Nash function $p \geq 0$ on $X(\R)$. Consequently, the strengthened form \eqref{eqn:cond-u-strong} of Hypothesis \ref{hyp:cond-u} holds on $X^+(\R)$.
\end{lemma}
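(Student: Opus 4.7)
The strategy is to reduce everything to the Nash-function bound on $\eta(w)$ supplied by Theorem \ref{prop:asymptotic}. That bound only controls $\eta(\cdot)$, not its derivatives along the coordinate vector fields $D_1,\ldots,D_l$ of the ambient vector space $X$. My plan is to trade these coordinate vector fields for the vector fields $D_{v_1},\ldots,D_{v_n}$ coming from the infinitesimal $G$-action (for some basis $v_1,\ldots,v_n$ of $\mathfrak{g}$), whose action on $u=\eta(v)$ is controlled by the $G$-equivariance of $\eta$.

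Concretely, I would first apply Lemma \ref{prop:diff-op-relation} with $f=f_\natural$ to each $D_j$ and iterate to obtain, for the single operator $D:=D_1^{i_1}\cdots D_l^{i_l}$, an identity
\[ f_\natural^N\, D \;=\; \sum_s a_s\, M_s \quad\text{in}\; D_X, \]
where $N\ge 0$, $a_s\in\R[X]$, and each $M_s$ is a noncommutative monomial in $D_{v_1},\ldots,D_{v_n}$. Expanding $D_{j_1}\cdots D_{j_m}$ via the single-factor relation and commuting $D_{v_i}$ past $f_\natural^{-k_j}a_{ij}$ by the product rule produces rational functions with $f_\natural$-denominators; clearing denominators once at the end suffices. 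Applied to $u=\eta(v)$, the intertwining property $D_{v_i}\eta(w)=\eta(d\pi(v_i)w)$---which uses that $V_\pi$ is smooth by the SAF hypothesis---shows that each $M_s u$ equals $\eta(w_s)$ for an explicit $w_s\in V_\pi$ built from $v$ via iterated $d\pi$.

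At this point Theorem \ref{prop:asymptotic} yields a continuous semi-norm $q$ on $V_\pi$ and a Nash function $p_0\ge 0$ on $X^+(\R)$ such that $|\eta(w)(x)|\le q(w)p_0(x)$, whence
\[ f_\natural^N\,|Du| \;\le\; \sum_s |a_s(x)|\,q(w_s)\,p_0(x) \;\le\; \tilde P(x)\,p_0(x) \]
for a polynomial $\tilde P\ge 0$ on $X$ (bound $|a_s|$ by $1+a_s^2$ and absorb the finite constants $q(w_s)$). To promote the Nash factor $\tilde P p_0$ from $X^+(\R)$ to $X(\R)$, I invoke Lemma \ref{prop:Nash-bound}: for some $\mu\in\Lambda_\Z$ with $\mu\ge_X 0$ and some polynomial $p_1\ge 0$ on $X(\R)$ one has $|f|^\mu\,\tilde P p_0\le p_1$. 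Setting $k:=N+\max_i\mu_i$ and $Q:=\prod_i f_i^{k-N-\mu_i}\in\R[X]$,
\[ |Du| \;\le\; \tilde P p_0\cdot f_\natural^{-N} \;\le\; p_1\cdot|f|^{-\mu}\cdot f_\natural^{-N} \;=\; (p_1 Q)\cdot f_\natural^{-k}, \]
so $p:=p_1 Q$, a polynomial and hence Nash, is the desired global bound on $X(\R)$; the strengthened form \eqref{eqn:cond-u-strong} of Hypothesis \ref{hyp:cond-u} is then immediate, since a polynomial is locally bounded by a constant $C_V$ on any relatively compact neighbourhood. The main obstacle I expect is the bookkeeping of $f_\natural$-denominators during the iterative reduction in the first step; the remaining analytic assembly is essentially mechanical.
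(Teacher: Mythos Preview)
Your proposal is correct and follows essentially the same route as the paper: use Lemma~\ref{prop:diff-op-relation} to trade the coordinate vector fields $D_j$ for the $\mathfrak{g}$-vector fields $D_{v_i}$ at the cost of $f_\natural$-denominators, use the intertwining property to rewrite $D_{v_i}$-monomials applied to $u$ as $\eta(w_s)$, bound those by Theorem~\ref{prop:asymptotic}, and then extend the Nash bound to $X(\R)$ via Lemma~\ref{prop:Nash-bound}. The paper organizes the first step as an induction showing $D_1^{i_1}\cdots D_l^{i_l}u$ lies in $\mathrm{span}\{h\,u': h\in\R[X][f_\natural^{-1}],\; u'\in\eta(V_\pi)\}$ rather than as an operator identity, which sidesteps the commutator bookkeeping you flagged, but the content is the same.
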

\begin{proof}
	As the first step, we show by induction on $i_1 + \cdots + i_l$ that
	\begin{gather}\label{eqn:diff-lemma-aux}
		D_1^{i_1} \cdots D_l^{i_l} u \;\in\; \text{span}_\R \bigg\langle h u' : \; h \in \R[X][f_\natural^{-1}], \; u' \in \eta(V_\pi) \bigg\rangle.
	\end{gather}
	This is trivial when $i_1 = \cdots = i_l = 0$. For the induction step, consider $D_j (h u')$, where $1 \leq j \leq n$. Choose a basis $v_1, \ldots, v_n$ of $\mathfrak{g}$ and denote the corresponding vector fields on $X$ as $D_{v_1}, \ldots, D_{v_n}$. Use Lemma \ref{prop:diff-op-relation} to write $D_j = \sum_{i=1}^n a_i D_{v_i}$ for some $a_1, \ldots, a_n \in \R[X][f_\natural^{-1}]$. Now
	\begin{align*}
		D_j (h u') & = (D_j h) u' + h \cdot D_j u' \\
		& =  (D_j h) u' + \sum_{i=1}^n h a_i D_{v_i} u'.
	\end{align*}
	Note that $D_{v_i} u' = \eta( \pi(v_i) v') \in \eta(V_\pi)$ if $u' = \eta(v')$; also, $D_j$ leaves $\R[X][f_\natural^{-1}]$ stable. The proof of \eqref{eqn:diff-lemma-aux} is complete.
	
	It follows that $D_1^{i_1} \cdots D_l^{i_l} u = f_\natural^{-k} (t_1 u_1 + \cdots + t_m u_m)$ for some $t_i \in \R[X]$ and $u_i \in \eta(V_\pi)$. Each $u_i$ is bounded by a Nash function on $X^+(\R)$ by Theorem \ref{prop:asymptotic}; furthermore, Lemma \ref{prop:Nash-bound} says any Nash function on $X^+(\R)$ can be bounded by some $f_\natural^\mu p_1$ where $\mu \in \Z$ and $p_1 \geq 0$ is a polynomial function on $X(\R)$. The first assertion is thus established.
	
	To check \eqref{eqn:cond-u-strong}, observe that $D_1, \ldots, D_l$ generates $\mathscr{T}_X$.
\end{proof}

Choose any maximal compact subgroup $K \subset G(\R)$.

\begin{theorem}\label{prop:meromorphy}
	Given $\eta \in \mathcal{N}_\pi$ and $v \in V_\pi^{K\text{-fini}}$, the zeta integral $Z_\lambda(\eta, v, \cdot)$ in Definition \ref{def:zeta-integral} can be extended to a meromorphic family in $\lambda \in \Lambda_\CC$ of tempered distributions. Furthermore, take $a \in \Lambda_\Z$ with $a >_X 0$.
	\begin{enumerate}[(i)]
		\item There exist affine hyperplanes $H_1, \ldots, H_t \subset \Lambda_\CC$ whose vectorial parts $\vec{H}_i$ are defined over $\Q$, such that the pole set of $Z_\lambda$ is a union of translates $H_i - m a$, for various $i$ and $m \in \Z_{\geq 1}$.
		\item There exists a holomorphic function $\lambda \mapsto L(\eta, \lambda)$ on $\Lambda_\CC$ such that
			\[ LZ_\lambda(\eta, v, \xi) := L(\eta, \lambda) Z_\lambda(\eta, v, \xi) \;\text{is holomorphic on}\; \Lambda_\CC, \quad \forall v \in V_\pi^{K\text{-fini}}, \; \forall \xi \in \Schw(X). \]
			Furthermore, one may take $L(\eta, \lambda) = \prod_{i=1}^m \Gamma(\alpha_i(\lambda))^{-1}$ where $\alpha_1, \ldots, \alpha_m$ are affine functions on $\Lambda_\CC$ whose gradients are among $\{\vec{H}_1, \ldots, \vec{H}_m \}$. 
	\end{enumerate}
\end{theorem}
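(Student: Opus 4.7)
The plan is to combine the holonomicity of $D_{X^+_\CC} u$ (with $u = \eta(v)$) from Corollary \ref{prop:u-holonomicity-symm} with the multivariate Bernstein--Sato theorem (Theorem \ref{prop:multivariate-b}) to obtain a functional equation shifting $\lambda \mapsto \lambda - a$, then iterate it, and repackage the resulting denominator as reciprocal Gamma functions. The main technical hurdle is upgrading the $\mathscr{D}$-module identity to an identity of tempered distributions that pairs holomorphically against $\Schw(X)$; this is handled via Proposition \ref{prop:prolongement-ED} together with the Nash-type bound of Lemma \ref{prop:u-estimate}.

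First, fix $v \in V_\pi^{K\text{-fini}}$ and set $u := \eta(v)$. Corollary \ref{prop:u-holonomicity-symm} --- where Axiom \ref{axiom:PVS} enters decisively --- gives that $\mathscr{M}' := D_{X^+_\CC} u$ is holonomic, hence $\mathscr{M} := j_* \mathscr{M}'$ is a holonomic $\mathscr{D}_{X_\CC}$-module. Lemma \ref{prop:u-estimate} verifies \eqref{eqn:cond-u-strong} for $u$. Applying Theorem \ref{prop:multivariate-b} to the prescribed $a$ produces $P(s) \in D_{X_\CC}[s_1,\ldots,s_r]$ and a nonzero $b(s) \in \CC[s_1,\ldots,s_r]$ with $P(s)(u \otimes f^s) = b(s)\, u \otimes f^{s-a}$; by Proposition \ref{prop:prolongement-ED} this is also a distributional identity on $X(\R)$ for $\Re(\lambda) \gg_X 0$. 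Transposing $P(\lambda)$ to its formal adjoint $P(\lambda)^{*}$ --- an algebraic differential operator with polynomial-in-$\lambda$ coefficients, continuous on $\Schw(X)$ and holomorphic in $\lambda$ --- yields the key functional equation
\[ b(\lambda)\, Z_{\lambda - a}(\eta, v, \xi) \;=\; Z_\lambda\!\bigl(\eta, v,\, P(\lambda)^{*} \xi\bigr), \qquad \Re(\lambda) \gg_X 0. \]

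Iterating and shifting $\lambda \mapsto \lambda + Na$ gives
\[ Z_\lambda(\eta, v, \xi) \;=\; \frac{Z_{\lambda + Na}\!\bigl(\eta, v,\, Q_N(\lambda)\xi\bigr)}{\prod_{j=1}^{N} b(\lambda + ja)}, \qquad Q_N(\lambda) := P(\lambda + Na)^{*} \circ \cdots \circ P(\lambda + a)^{*}. \]
Since $a >_X 0$, every $\lambda_0 \in \Lambda_\CC$ satisfies $\Re(\lambda_0 + Na) \gg_X 0$ for sufficiently large $N$, so the right-hand side defines the meromorphic continuation of $Z_\lambda$ near $\lambda_0$, with poles lying inside $\bigcup_{j \geq 1} V\!\bigl(b(\,\cdot + ja)\bigr)$. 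Factor $b(s) = c \prod_{i=1}^{m} \alpha_i(s)$ into affine pieces; invoking the rationality of Bernstein--Sato roots --- propagated to the multivariate setting via the one-variable reduction of \cite[Appendice A]{BD92} --- gives that each gradient $\vec\alpha_i$ is $\Q$-rational. Setting $H_i := \{\alpha_i = 0\}$ establishes (i).

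For (ii), choose $a$ sufficiently generic within the cone $\{a >_X 0\} \cap \Lambda_\Z$ so that $\gamma_i := \vec\alpha_i(a) \in \Q \setminus \{0\}$ for every $i$; this is possible because the failure locus is a finite union of proper $\Q$-rational hyperplanes, and the pole set of $Z_\lambda$ is intrinsic to $Z_\lambda$ so the description in (i) is unaffected. Replacing $\alpha_i$ by $-\alpha_i$ if necessary, we arrange $\gamma_i > 0$. With $\tilde\alpha_i(\lambda) := \alpha_i(\lambda)/\gamma_i$, the classical identity $\prod_{j=1}^{N}(z+j) = \Gamma(z+N+1)/\Gamma(z+1)$ rewrites
\[ \prod_{j=1}^{N} b(\lambda + ja) \;=\; c^{N} \prod_{i=1}^{m} \gamma_i^{N}\,\frac{\Gamma\!\bigl(\tilde\alpha_i(\lambda) + N + 1\bigr)}{\Gamma\!\bigl(\tilde\alpha_i(\lambda) + 1\bigr)}. \]
Taking
\[ L(\eta,\lambda) \;:=\; \prod_{i=1}^{m} \Gamma\!\bigl(\tilde\alpha_i(\lambda) + 1\bigr)^{-1}, \]
entire and of the required shape, one finds
\[ L(\eta,\lambda)\, Z_\lambda(\eta, v, \xi) \;=\; \frac{Z_{\lambda + Na}\!\bigl(\eta, v,\, Q_N(\lambda) \xi\bigr)}{c^{N}\prod_{i}\gamma_i^{N}\, \Gamma\!\bigl(\tilde\alpha_i(\lambda) + N + 1\bigr)}, \]
holomorphic on the region $\Re(\lambda + Na) \gg_X 0$. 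Letting $N$ grow and using the functional equation to check agreement on overlaps yields an entire extension on all of $\Lambda_\CC$, completing (ii).
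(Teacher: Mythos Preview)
Your core approach matches the paper's: holonomicity of $D_{X^+_\CC} u$ via Corollary \ref{prop:u-holonomicity-symm}, the Nash estimate of Lemma \ref{prop:u-estimate}, and the multivariate $b$-function of Theorem \ref{prop:multivariate-b} combine to give meromorphic continuation and the pole description in (i). Your explicit iteration and Gamma-function repackaging are considerably more detailed than the paper's terse appeal to \cite[Th\'eor\`eme A.3]{BD92}, but the underlying mechanism is the same.

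There is, however, a genuine gap in your treatment of (ii). You fix $v$ at the outset, and your $b$-function, your operator $P$, and hence your $L$ all depend on this particular $v$ through $u = \eta(v)$. But the statement demands a single $L(\eta,\lambda)$ --- note the deliberate absence of $v$ in the notation --- that works \emph{simultaneously} for every $v \in V_\pi^{K\text{-fini}}$. Nothing in your argument controls the poles uniformly as $v$ ranges over this infinite-dimensional space, and different $K$-finite vectors will in general produce different $b$-functions.

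The paper closes this gap with three short observations. First, if $L(\lambda) \cdot u|f|^\lambda$ extends holomorphically to $\Lambda_\CC$, then so does $L(\lambda) \cdot (Du)|f|^\lambda$ for every $D \in D_{X_\CC}$, since applying an algebraic differential operator cannot introduce new poles in $\lambda$. Second, each $w \in \mathfrak{g}$ acts on $X$ as an algebraic vector field and hence lies in $D_{X_\CC}$; consequently an $L$ that works for $v$ automatically works for all of $\pi(\mathcal{U}(\mathfrak{g}_\CC))v$. Third, the irreducible Harish-Chandra module $V_\pi^{K\text{-fini}}$ is finitely generated over $\mathcal{U}(\mathfrak{g}_\CC)$ by \cite[Theorem 4.3]{BK14}, so the product of the $L$'s attached to a finite generating set gives a uniform $L(\eta,\lambda)$. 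Inserting this step completes your argument.

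A secondary point: your manoeuvre of replacing the given $a$ by a generic $a'$ in (ii) is not quite licit as written. The $\vec{H}_i$ referenced in (ii) are those produced by (i) for the \emph{original} $a$, and the zero hyperplanes of $b_a$ versus $b_{a'}$ need not coincide --- only the actual pole set of $Z_\lambda$ is intrinsic, and it may be strictly smaller than either. The cleanest fix is to avoid changing $a$ altogether: once (i) describes the poles as translates $H_i - ma$ with uniformly bounded order, a suitable product of reciprocal Gamma factors with gradients $\vec{H}_i$ kills them directly, with no genericity hypothesis on $a$ required.
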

\begin{proof}
	By Corollary \ref{prop:u-holonomicity-symm}, $u := \eta(v)$ generates a holonomic $D_{X^+_\CC}$-module. The machine in \S\ref{sec:holonomicity} is thus applicable in view of Lemma \ref{prop:u-estimate}. In particular, Theorem \ref{prop:multivariate-b} gives the meromorphic continuation of $\lambda \mapsto Z_\lambda(\eta,v, \cdot)$ with values in $\Schw(X)^\vee$. For the chosen $v$, the description of poles in (i) is a standard consequence of the existence of $b$-functions, see \cite[Théorème A.3]{BD92}; in particular, the orders of poles are uniformly bounded by the degree of $b$. These properties are well-known when $r=1$.

	Consider (ii) for $v \in V_\pi^{K\text{-fini}}$. The description of (i) implies that one can choose affine functions $\alpha_1, \ldots, \alpha_m$ according to the configuration of singular hyperplanes such that $L(\lambda) := \prod_{i=1}^m \Gamma(\alpha_i(\lambda))^{-1}$ kills all the poles of $Z_\lambda(\eta, v, \cdot)$. The point is to choose an $L$ that works for all $v \in V_\pi^{K\text{-fini}}$. This is based on the observations below.
	\begin{itemize}
		\item If $L(\lambda) \cdot u|f|^\lambda$ extends holomorphically to $\Lambda_\CC$, the same holds for every element in $L(\lambda) \cdot D_{X_\CC} u|f|^\lambda$. 
		\item Every $w \in \mathfrak{g}$ gives rise to a vector field on $X(\R)$, identifiable with an element of $D_{X_\CC}$. Consequently, every $L(\lambda)$ that works for $v$ also works for all elements in $\pi(\mathcal{U}(\mathfrak{g}_\CC)) v$.
		\item Since $V_\pi^{K\text{-fini}}$ is an irreducible Harish-Chandra module, it is a finitely generated $\mathcal{U}(\mathfrak{g}_\CC)$-module by \cite[Theorem 4.3]{BK14}. Thus we obtain a desired $L$.
	\end{itemize}
\end{proof}

In order to treat the non $K$-finite vectors, we shall appeal to the \emph{principle of analytic continuation} from Gelfand--Shilov \cite[Chapter I, A.2.3]{GS1}, also stated in \cite[pp.65--66]{Ig00}. We present a mild generalization here.

\begin{proposition}\label{prop:GS-principle}
	Let $E$ be a barreled topological vector space and denote by $\Hom(E,\CC)$ its abstract dual. Given a map $T: \CC^r \to \Hom(E, \CC)$, $\lambda \mapsto T_\lambda$. Suppose $T$ satisfies
	\begin{itemize}
		\item for each $v \in E$, the function $\lambda \mapsto T_\lambda(v)$ is holomorphic on $\CC^r$;
		\item there exists an open subset $U \neq \emptyset$ of $\CC^r$ such that $T$ restricts to a holomorphic map $U \to E^\vee$.
	\end{itemize}
	Then $T$ is actually a holomorphic map $\CC^r \to E^\vee$.
\end{proposition}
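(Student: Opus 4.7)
The substance of the statement is the upgrade $T_\lambda \in \Hom(E, \CC) \leadsto T_\lambda \in E^\vee$ for every $\lambda$; once this is established, the paper's definition of holomorphy for maps into a dual space (weak, pointwise in $v$) makes $T : \CC^r \to E^\vee$ holomorphic automatically from the first hypothesis. The only relevant tool about $E$ is barreledness, which grants two forms of the Banach--Steinhaus theorem: (i) a pointwise bounded family of continuous linear functionals on $E$ is equicontinuous, and (ii) a pointwise limit of continuous linear functionals on $E$ is again continuous. The plan is to combine (i) and (ii) with a Taylor expansion rooted at a point of $U$.

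Fix $\lambda_0 \in U$ and a closed polydisc $\bar{P} \subset U$ centered at $\lambda_0$, with distinguished boundary torus $\mathbb{T}$. For each $v \in E$, the entire function $F_v(\lambda) := T_\lambda(v)$ admits the Taylor expansion
\[
  F_v(\lambda) = \sum_{\alpha \in \Z_{\geq 0}^r} a_\alpha(v)\, (\lambda - \lambda_0)^\alpha,
\]
convergent absolutely and uniformly on every compact subset of $\CC^r$, with coefficients given by the multivariate Cauchy formula
\[
  a_\alpha(v) = \frac{1}{(2\pi i)^r} \oint_{\mathbb{T}} \frac{T_\zeta(v)}{\prod_{j=1}^r (\zeta_j - \lambda_{0,j})^{\alpha_j+1}} \, d\zeta.
\]
Each $a_\alpha$ is manifestly $\CC$-linear in $v$.

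Since the compact set $\mathbb{T}$ lies inside $U$, every $T_\zeta$ with $\zeta \in \mathbb{T}$ is continuous, and the family $\{T_\zeta : \zeta \in \mathbb{T}\}$ is pointwise bounded in $v$ by continuity of $F_v$ on $\mathbb{T}$. Banach--Steinhaus (i) therefore produces a continuous seminorm $p$ on $E$ with $\sup_{\zeta \in \mathbb{T}} |T_\zeta(v)| \leq p(v)$ for all $v$; feeding this into the Cauchy formula yields $|a_\alpha(v)| \leq C_\alpha\, p(v)$, so $a_\alpha \in E^\vee$ for every $\alpha$. Now fix an arbitrary $\lambda \in \CC^r$. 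The partial sums $S_N(v) := \sum_{|\alpha| \leq N} a_\alpha(v)\, (\lambda - \lambda_0)^\alpha$ are finite linear combinations of elements of $E^\vee$, and $S_N(v) \to T_\lambda(v) = F_v(\lambda)$ pointwise in $v$ as $N \to \infty$ by absolute convergence of the Taylor series at $\lambda$. Banach--Steinhaus (ii) then concludes $T_\lambda \in E^\vee$, and the weak holomorphy of $T$ into $E^\vee$ follows from hypothesis~1 of the statement.

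The hard point is the first application of Banach--Steinhaus, which is exactly what forces $E$ to be barreled rather than merely Fréchet or metrizable: one must convert pointwise continuity of each $T_\zeta$ (the hypothesis on $U$) into equicontinuity of the whole family over the compact torus, so that the Cauchy integral defining $a_\alpha$ yields a genuine element of $E^\vee$ and not merely an abstract linear functional. After that, the Taylor-series based propagation from $\lambda_0$ to arbitrary $\lambda$ and the final Banach--Steinhaus step are essentially routine.
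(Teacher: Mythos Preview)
Your proof is correct and follows essentially the same architecture as the paper's: Taylor-expand at a point $\lambda_0 \in U$, use the Cauchy integral over the distinguished boundary of a polydisc in $U$ to show the Taylor coefficients lie in $E^\vee$, then invoke Banach--Steinhaus on the pointwise-convergent partial sums to conclude $T_\lambda \in E^\vee$ for all $\lambda$. The one noteworthy difference is in how continuity of the coefficients $a_\alpha$ is obtained: the paper appeals to the Gelfand--Pettis integral in the quasi-complete weak dual $E^\vee$ to define $a_\alpha$ directly as an element of $E^\vee$, whereas you first apply Banach--Steinhaus to the family $\{T_\zeta : \zeta \in \mathbb{T}\}$ to get a single seminorm bound and then estimate the scalar Cauchy integral by hand. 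Your route is a bit more elementary in that it bypasses vector-valued integration; the paper's route is a bit more conceptual. Both are sound and neither is materially shorter.
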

In other words, continuity of $T_\lambda$ propagates from $\lambda \in U$ to all of $\CC^r$. Note that \cite{Ig00} allows more general domains than $\CC^r$.
\begin{proof}
	Fix $\lambda_0 \in U$. For every $v \in E$, we have $T_\lambda(v) = \sum_{\bm{k} \geq 0} c_{\bm{k}}(v) (\lambda - \lambda_0)^{\bm{k}}$ for uniquely determined $c_{\bm{k}}(v) \in \CC$. Here we adopt the notation of multi-indices $\bm{k} = (k_1, \ldots, k_r)$. Then $c_{\bm{k}}(v)$ is linear in $v$.

	By \cite[Corollary 2 to Theorem 34.2]{Tr67}, $E^\vee$ is quasi-complete with respect to the topology of pointwise convergence. Quasi-complete means: every bounded closed subset is complete. Take $\epsilon > 0$ so small that $\{ \lambda: \forall i \; |\lambda_i - \lambda_{0,i}| \leq \epsilon \}$ is contained in $U$. Put $C := \{ \lambda: \forall i \; |\lambda_i - \lambda_{0,i}|=\epsilon \}$. With the topology of pointwise convergence on $E^\vee$, holomorphy implies that $T: C \to E^\vee$ is continuous. The theory of Gelfand--Pettis integrals for quasi-complete spaces (see \cite{Gar14}) is applicable and we may define the elements of $E^\vee$
	\[ c_{\bm{k}} := \frac{1}{(2\pi i)^r} \oint_C \dfrac{T_\lambda}{(\lambda - \lambda_0)^{\bm{k} + \bm{1}}} \dd\lambda, \quad \bm{1} := (1, \ldots, 1). \]
	The characterization of Gelfand--Pettis integrals together with Cauchy's formula inside $U$ entail that $c_{\bm{k}}$ maps any $v \in E$ to the previously defined $c_{\bm{k}}(v)$. Our notation is thus consistent.
	
	Now for every $\lambda \in \CC$, we have the pointwise limit $\sum_{\bm{k} \leq \bm{n}} c_{\bm{k}} (\lambda - \lambda_0)^{\bm{k}} \to T_\lambda$ as $\bm{n} \to +\infty$; the left-hand side always lies in $E^\vee$. Applying \cite[Corollary to Theorem 33.1]{Tr67}, a consequence of the Banach--Steinhaus theorem, we see $T_\lambda \in E^\vee$. This establishes the holomorphy of $T$.
\end{proof}

\begin{theorem}\label{prop:meromorphy-1}
	Retain the notations above. The family $LZ_\lambda$ has a unique extension to
	\[ LZ_\lambda: \Lambda_\CC \longrightarrow \mathrm{Bil}(V_\pi, \mathcal{X}) = (V_\pi \hat{\otimes} \Schw(X))^\vee. \]
	Moreover, the bilinear form $(v,\xi) \mapsto LZ_\lambda(\eta,v,\xi)$ on $(\pi \otimes |\omega|^\lambda) \times \Schw(X)$ is $G(\R)$-invariant.
\end{theorem}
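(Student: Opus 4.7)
The strategy is to use the Schwartz-algebra action of \S\ref{sec:Schwartz-action} to push $LZ_\lambda$ from $V_\pi^{K\text{-fini}}$ out to all of $V_\pi$, and then invoke the generalized principle of analytic continuation (Proposition \ref{prop:GS-principle}) to transfer separate continuity from the convergence region to every $\lambda \in \Lambda_\CC$.

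\textbf{Construction of the extension.} Using $V_\pi = \pi(\Schw(G))\cdot V_\pi^{K\text{-fini}}$ from \cite{BK14}, an arbitrary $v \in V_\pi$ admits a finite decomposition $v = \sum_j \pi(\Xi_j) w_j$ with $\Xi_j \in \Schw(G)$ and $w_j \in V_\pi^{K\text{-fini}}$. The plan is to set
\[ LZ_\lambda(\eta, v, \xi) := \sum_j LZ_\lambda(\eta, w_j, \check{\Xi}_j \xi), \]
where each summand is available for every $\lambda \in \Lambda_\CC$ by Theorem \ref{prop:meromorphy}, is holomorphic in $\lambda$, and is continuous in $\xi$ (as the composition of the continuous operator $\check{\Xi}_j$ from Lemma \ref{prop:Schwartz-action-X} with the tempered distribution $LZ_\lambda(\eta, w_j, \cdot)$). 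Well-definedness is then secured by Lemma \ref{prop:Schwartz-transpose}: in the range of convergence supplied by Theorem \ref{prop:zeta-convergence}, the right-hand side equals $L(\eta,\lambda) Z_\lambda(\eta, v, \xi)$ regardless of the chosen decomposition, and since both sides are holomorphic in $\lambda$ the equality propagates to all of $\Lambda_\CC$. Linearity in $v$ and in $\xi$ is inherited, as is pointwise holomorphy of $\lambda \mapsto LZ_\lambda(\eta, v, \xi)$.

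\textbf{Continuity in $v$ and joint continuity.} For fixed $\xi \in \Schw(X)$, set $T^\xi_\lambda(v) := LZ_\lambda(\eta, v, \xi)$; this is a priori just a linear functional on $V_\pi$. The holomorphy $\lambda \mapsto T^\xi_\lambda(v)$ is by construction, and on the open range of convergence Theorem \ref{prop:zeta-convergence} ensures $T^\xi_\lambda \in V_\pi^\vee$, depending holomorphically on $\lambda$ in the strong sense. Applying Proposition \ref{prop:GS-principle} to the barreled Fréchet space $E = V_\pi$ then yields $T^\xi_\lambda \in V_\pi^\vee$ for every $\lambda$. Combined with the already-established continuity in $\xi$, this makes $(v, \xi) \mapsto LZ_\lambda(\eta, v, \xi)$ a separately continuous bilinear form on the two Fréchet spaces $V_\pi$ and $\Schw(X)$, hence jointly continuous by \cite[Corollary to Theorem 34.1]{Tr67}. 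Thus $LZ_\lambda \in \mathrm{Bil}(V_\pi, \Schw(X)) = (V_\pi \hat{\otimes} \Schw(X))^\vee$ as required. Uniqueness is immediate: any two such extensions agree on $V_\pi^{K\text{-fini}} \times \Schw(X)$ in the range of convergence (where both coincide with $L(\eta,\lambda) Z_\lambda$), hence on all of $\Lambda_\CC$ by pointwise analytic continuation, and finally on $V_\pi \times \Schw(X)$ by joint continuity and density.

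\textbf{$G(\R)$-invariance and main obstacle.} In the range of convergence, the $G(\R)$-invariance of $(v, \xi) \mapsto LZ_\lambda(\eta, v, \xi)$ on $(\pi \otimes |\omega|^\lambda) \times \Schw(X)$ is recorded in Remark \ref{rem:zeta-properties}(3). For each fixed $g \in G(\R)$, the invariance is an equality of holomorphic functions in $\lambda$ agreeing on a non-empty open set, so it extends to all $\lambda \in \Lambda_\CC$ by pointwise analytic continuation. The main obstacle here is the Gelfand--Shilov step in the previous paragraph: the Schwartz-algebra construction produces, a priori, only an algebraic linear functional on $V_\pi$ at each $\lambda$, and one must argue that the combined data of well-definedness (via analytic continuation), pointwise holomorphy in $\lambda$, and continuity on a non-empty open subset of $\Lambda_\CC$ suffices to force continuity on $V_\pi$ for every $\lambda$. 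Proposition \ref{prop:GS-principle} is exactly designed to effect this passage, and its applicability rests on the barreledness of the Fréchet space $V_\pi$.
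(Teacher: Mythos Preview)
Your proposal is correct and follows essentially the same approach as the paper: extend $LZ_\lambda$ to all of $V_\pi$ via the Schwartz-algebra action and Lemma \ref{prop:Schwartz-transpose}, verify well-definedness by analytic continuation from the convergence region, obtain continuity in $\xi$ directly and continuity in $v$ via Proposition \ref{prop:GS-principle}, then upgrade to joint continuity using that both spaces are Fr\'echet. The uniqueness and $G(\R)$-invariance arguments likewise match the paper's.
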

\begin{proof}
	Use the results from \S\ref{sec:Schwartz-action}. To define $LZ_\lambda(\eta,v,\xi)$ for all $\lambda \in \Lambda_\CC$ and $v \in V_\pi$, we put
	\begin{equation}\label{eqn:LZ-ext-Schwartz}\begin{gathered}
		LZ_\lambda\left( \eta, \sum_{i=1}^m \pi(\Xi_i) v_i , \xi \right) := \sum_{i=1}^m LZ_\lambda\left( \eta, v_i, \check{\Xi}_i \xi \right), \\
		\Xi_i \in \Schw(G), \; v_i \in V_\pi^{K\text{-fini}}, \; i=1,\ldots,m.
	\end{gathered}\end{equation}
	Recall that $\pi(\Schw(G)) V_\pi^{K\text{-fini}} = V_\pi$.
	\begin{asparaenum}[(a)]
		\item The right-hand side of \eqref{eqn:LZ-ext-Schwartz} is holomorphic in $\lambda$ by Theorem \ref{prop:meromorphy}. Let us show it depends solely on $v = \sum_i \pi(\Xi_i) v_i$. If $\sum_i \pi(\Xi_i) v_i = \sum_i \pi(\Xi'_i) v'_i$ in $V_\pi$, then for $\Re(\lambda) \gg_X 0$ Lemma \ref{prop:Schwartz-transpose} entails
		\[\begin{tikzcd}[row sep=small, column sep=small]
			Z_\lambda\left( \eta, \sum_i \pi(\Xi_i) v_i, \xi \right) \arrow[-,double equal sign distance]{r} & \sum_i Z_\lambda\left( \eta, \pi(\Xi_i) v_i, \xi \right) \arrow[-,double equal sign distance]{r} & \sum_i Z_\lambda\left( \eta, v_i, \check{\Xi}_i \xi \right) \\
			Z_\lambda\left( \eta, \sum_i \pi(\Xi'_i) v'_i, \xi \right) \arrow[-,double equal sign distance]{u} \arrow[-,double equal sign distance]{r} & \sum_i Z_\lambda\left( \eta, \pi(\Xi'_i) v'_i, \xi \right) \arrow[-,double equal sign distance]{r} & \sum_i Z_\lambda\left( \eta, v'_i, \check{\Xi}'_i \xi \right),
		\end{tikzcd}\]
		every term being interpreted by Theorem \ref{prop:zeta-convergence}. The equality between the rightmost two terms holds and extends analytically to all $\lambda \in \Lambda_\CC$, and we can multiply all terms by $L(\eta,\lambda)$. This reasoning also shows that \eqref{eqn:LZ-ext-Schwartz} is compatible with Theorem \ref{prop:zeta-convergence} in the range of convergence.
		\item To obtain $LZ_\lambda: \Lambda_\CC \to \mathrm{Bil}(V_\pi, \mathcal{X}) = (V_\pi \hat{\otimes} \Schw(X))^\vee$, it remains to show that $LZ_\lambda(\eta,v,\xi)$ is jointly continuous in $(v, \xi) \in V_\pi \times \Schw(X)$ for each $(v,\xi)$. Since $V_\pi$ and $\Schw(X)$ are both Fréchet, it suffices to check separate continuity. The continuity in $\xi$ is easier: in view of \eqref{eqn:LZ-ext-Schwartz}, it results from Theorem \ref{prop:meromorphy} and the continuity of $\Schw(G)$-action on $\Schw(X)$ in Lemma \ref{prop:Schwartz-action-X}.
		\item In order to obtain continuity in $v = \sum_i \pi(\Xi_i) v_i$ for $\xi$ fixed, recall that
		\begin{compactitem}
			\item continuity is known inside the range of convergence, and
			\item for fixed $(v,\xi)$, we just showed that $LZ_\lambda(\eta,v,\xi)$ is holomorphic on $\Lambda_\CC$.
		\end{compactitem}
		Now we can apply the Proposition \ref{prop:GS-principle} to propagate the continuity in $v$ to all $\Lambda_\CC$. This is legitimate since $V_\pi$ is Fréchet, thus barreled.
		\item We have arrived at the holomorphy of $LZ_\lambda: \Lambda_\CC \to \mathrm{Bil}(V_\pi, \Schw(X))$. Such an extension is unique since $V_\pi^{K\text{-fini}}$ is dense in $V_\pi$.
		\item Finally, the $G(\R)$-invariance has been observed in Remark \ref{rem:zeta-properties} in the range of convergence. The general case follows by analytic continuation.
	\end{asparaenum}
\end{proof}

This confirms the predictions in \cite[\S\S 4.3--4.4]{Li15} in the setting of Axiom \ref{axiom:PVS}. For well-chosen $L(\eta,\lambda)$, one might expect some connection with the inverse of Langlands' Archimedean $L$-factors.

\printbibliography[heading=bibintoc]	

\vspace{1em}
\begin{flushleft}
	Wen-Wei Li \\
	E-mail address: \href{mailto:wwli@math.ac.cn}{\texttt{wwli@math.ac.cn}} \\
	Academy of Mathematics and Systems Science, Chinese Academy of Sciences \\
	55, Zhongguancun donglu, 100190 Beijing, People's Republic of China. \\ \vspace{0.7em}
	University of Chinese Academy of Sciences \\
	19A, Yuquan lu, 100049 Beijing, People's Republic of China.
\end{flushleft}

\end{document}